\theoremstyle{plain}
\newtheorem{theorem}{Theorem}[section]
\newtheorem{c-theorem}{Construction theorem}[section]
\newtheorem{lemma}[theorem]{Lemma}
\newtheorem{proposition}[theorem]{Proposition}
\newtheorem{corollary}[theorem]{Corollary}
\theoremstyle{definition}
\newtheorem{definition}[theorem]{Definition}
\theoremstyle{remark}
\newtheorem{remark}[theorem]{Remark}
\newcommand*{\Ge}{\geqslant}
\newcommand*{\Le}{\leqslant}
\newcommand{\ncom}{\newcommand}
\ncom{\bq}{\begin{equation}}
\ncom{\eq}{\end{equation}}
\ncom{\beqn}{\begin{eqnarray*}}
\ncom{\eeqn}{\end{eqnarray*}}
\ncom{\beq}{\begin{eqnarray}}
\ncom{\eeq}{\end{eqnarray}}
\ncom{\nno}{\nonumber}
\ncom{\rar}{\rightarrow}
\ncom{\Rar}{\Rightarrow}
\ncom{\noin}{\noindent}
\ncom{\bc}{\begin{centre}}
\ncom{\ec}{\end{centre}}
\ncom{\sz}{\scriptsize}
\ncom{\rf}{\ref}
\ncom{\sgm}{\sigma}
\ncom{\Sgm}{\Sigma}
\ncom{\dt}{\delta}
\ncom{\Dt}{Delta}
\ncom{\lmd}{\lambda}
\ncom{\Lmd}{\Lambda}
\ncom{\eps}{\epsilon}
\ncom{\pcc}{\stackrel{P}{>}}
\ncom{\dist}{{\rm\,dist}}
\ncom{\im}{{\rm Im\,}}
\ncom{\sgn}{{\rm sgn\,}}
\ncom{\ba}{\begin{array}}
\ncom{\ea}{\end{array}}
\ncom{\eop}{\hfill{{\rule{2.5mm}{2.5mm}}}}
\ncom{\eof}{\hfill{{\rule{1.5mm}{1.5mm}}}}
\ncom{\hone}{\mbox{\hspace{1em}}}
\ncom{\htwo}{\mbox{\hspace{2em}}}
\ncom{\hthree}{\mbox{\hspace{3em}}}
\ncom{\hfour}{\mbox{\hspace{4em}}}
\ncom{\hsev}{\mbox{\hspace{7em}}}
\ncom{\vone}{\vskip 2ex}
\ncom{\vtwo}{\vskip 4ex}
\ncom{\vonee}{\vskip 1.5ex}
\ncom{\vthree}{\vskip 6ex}
\ncom{\vfour}{\vspace*{8ex}}
\ncom{\norm}{\|\;\;\|}
\ncom{\integ}[4]{\int_{#1}^{#2}\,{#3}\,d{#4}}
\ncom{\inp}[2]{\langle{#1},\,{#2} \rangle}
\ncom{\Inp}[2]{\Langle{#1},\,{#2} \Langle}
\ncom{\vspan}[1]{{{\rm\,span}\#1 \}}}
\ncom{\dm}[1]{\displaystyle {#1}}
\begin{document}
\title[Dirichlet-type spaces of the bidisc]{Dirichlet-type spaces of the bidisc \\ and Toral $2$-isometries}

\author[Santu Bera, Sameer Chavan and Soumitra Ghara]{Santu Bera, Sameer Chavan and Soumitra Ghara}

\address{Department of Mathematics and Statistics\\
Indian Institute of Technology Kanpur, 208016, India}

\email{santu20@iitk.ac.in}
\email{chavan@iitk.ac.in}  
\email{sghara@iitk.ac.in}

\thanks{The first author is supported through the PMRF Scheme (2301352), while the work of the third author is supported by INSPIRE Faculty Fellowship (DST/INSPIRE/04/2021/002555).}

\keywords{Dirichlet-type spaces, toral $2$-isometry, division property, Gleason's problem, 
Cowen-Douglas class, Koszul complex}

\subjclass[2020]{Primary 47A13, 32A36, 47B38; Secondary 31C25, 46E20}

\begin{abstract}  
We introduce and study Dirichlet-type spaces $\mathcal D(\mu_1, \mu_2)$ of the unit bidisc $\mathbb D^2,$ where $\mu_1, \mu_2$ are finite positive Borel measures on the unit circle. 
We show that the coordinate functions $z_1$ and $z_2$ are multipliers for $\mathcal D(\mu_1, \mu_2)$ and the complex polynomials are dense in $\mathcal D(\mu_1, \mu_2).$ Further, we obtain the division property and 
solve Gleason's problem for $\mathcal D(\mu_1, \mu_2)$ over a bidisc centered at the origin. In particular, we show that the commuting pair $\mathscr M_z$
of the multiplication operators $\mathscr M_{z_1},$ $\mathscr M_{z_2}$ on $\mathcal D(\mu_1, \mu_2)$ defines a cyclic toral $2$-isometry and $\mathscr M^*_z$ belongs to the Cowen-Douglas class ${\bf B}_1(\mathbb D^2_r)$ for some $r >0.$ Moreover, we formulate a notion of wandering subspace for commuting tuples and use it  
to obtain a bidisc analog of Richter's representation theorem for cyclic analytic $2$-isometries. 
In particular, we show that a cyclic analytic toral $2$-isometric pair $T$ with cyclic vector $f_0$ is unitarily equivalent to $\mathscr M_z$ on $\mathcal D(\mu_1, \mu_2)$ if and only if 
$\ker T^*,$ spanned by $f_0,$ is a 
wandering subspace for $T.$ 
\end{abstract}

\maketitle

\section{Introduction and preliminaries}

The aim of this paper is to obtain a bidisc counter-part of the theory of Dirichlet-type spaces of the open unit disc as presented in \cite{R1991} (see \cite{CGR2020} for a ball counter-part of this theory).  Throughout this paper, $\mathbb D$ denotes the open unit disc $\{z \in \mathbb C : |z|<1\}$ in the complex plane $\mathbb C.$ 
Recall that Dirichlet-type spaces of $\mathbb D$ are model spaces for the class of cyclic analytic $2$-isometries (see \cite{R1991}). Thus to arrive at an appropriate notion of the Dirichlet-type spaces of the unit bidisc $\mathbb D^2$, it is helpful to look for function spaces which support the class of $2$-isometries naturally associated with $\mathbb D^2.$ Let us first recall the definition of such $2$-isometries. 

For a complex Hilbert space $\mathcal H,$ let $\mathcal B(\mathcal H)$ denote the $C^*$-algebra of bounded linear operators on $\mathcal H.$
For a positive integer $d,$ a {\it commuting $d$-tuple $T$ on $\mathcal H$} is the $d$-tuple 
$(T_1, \ldots, T_d)$ of operators $T_1, \ldots, T_d \in \mathcal B(\mathcal H)$ satisfying $T_iT_j=T_jT_i,$ $1 \Le i \neq j \Le d.$ Let $T=(T_1, \ldots, T_d)$ be a commuting $d$-tuple on $\mathcal H$. We say that $T=(T_1, \ldots, T_d)$ is a {\it toral isometry} if $T_1, \ldots, T_d$ are isometries. 
Following \cite{A1990, AS1999, R1991}, $T$ is said to be a {\it toral $2$-isometry} if 
\beq
\label{C1}
I - T^*_iT_i - T^*_jT_j + T^*_jT^*_iT_iT_j=0, \quad i,j = 1, \ldots, d. 
\eeq
A toral isometry is necessarily a toral $2$-isometry, but the converse is not true (see \cite[Example~1]{AS1999}).  

To propose a successful analog of Dirichlet-type spaces on $\mathbb D^2,$ it is helpful to examine examples of toral $2$-isometries arising from function spaces. Since the operator of multiplication by the coordinate function on the classical Dirichlet space $\mathcal D(\mathbb D)$ is a $2$-isometry, it is natural to seek the classical Dirichlet space of the unit bidisc. 
Recall that the {\it Dirichlet space $\mathcal D(\mathbb D) \otimes \mathcal D(\mathbb D)$ of $\mathbb D^2$} 
is given by 
\beqn
\Big\{f \in \mathcal O(\mathbb D^2): \|f\|^2_{\mathcal D(\mathbb D) \otimes \mathcal D(\mathbb D)}:=\sum_{(m, n) \in \mathbb Z^2_+}|\hat{f}(m, n)|^2 (m+1)(n+1)< \infty\Big\},
\eeqn
where $\mathcal O(\Omega)$ denotes the space of holomorphic functions on a domain $\Omega,$  $\mathbb Z_+$ denotes the set of nonnegative integers and $\hat{f}$ denotes the Fourier transform of $f.$ 
It turns out that if $\mathscr M_{z_1}$ and $\mathscr M_{z_2}$ are the operators of multiplication by the coordinate functions $z_1$ and $z_2,$ respectively, on $\mathcal D(\mathbb D) \otimes \mathcal D(\mathbb D),$ then 
the commuting pair $(\mathscr M_{z_1}, \mathscr M_{z_2})$ satisfies \eqref{C1} for $1 \Le i=j \Le 2,$ but it fails to satisfy \eqref{C1} for $1 \Le i \neq j \Le 2.$ 
This failure may be attributed to the fact that the mapping $(m, n) \mapsto \|z^m_1z^n_2\|^2$ is a polynomial of bi-degree $(1, 1).$ Interestingly, there is a ``natural'' choice $\mathcal D(\mathbb D^2)$ of the Dirichlet space  containing $\mathcal D(\mathbb D) \otimes \mathcal D(\mathbb D)$ for which the associated pair $(\mathscr M_{z_1}, \mathscr M_{z_2})$ is a toral $2$-isometry: 
$$\mathcal D(\mathbb D^2)=\Big\{f \in \mathcal O(\mathbb D^2): \|f\|^2_{\mathcal D(\mathbb D^2)}:=\sum_{(m, n) \in \mathbb Z^2_+}|\hat{f}(m, n)|^2 (m+n+1) < \infty\Big\}.$$
The norm $\|\cdot\|_{\mathcal D(\mathbb D^2)}$ can also be written as follows:
\beq \label{Arcozzi-formula}
\|f\|^2_{\mathcal D(\mathbb D^2)} &=& \|f\|^2_{H^2(\mathbb D^2)} + \sup_{0 < r < 1} \int_{\mathbb T} \int_{\mathbb D}|\partial_1 f(z_1, re^{i \theta})|^2  \,dA(z_1)d\theta \notag \\
&+& \sup_{0 < r < 1} \int_{\mathbb T} \int_{\mathbb D}|\partial_2 f(re^{i \theta}, z_2)|^2  \,dA(z_2)d\theta, 
\eeq 
where $d\theta$ (resp.  
$dA$) denotes the \emph{normalized Lebesgue arc-length $($resp. area$)$ measure} on $\mathbb T$ (resp. $\mathbb D$).
Recall that the {\it Hardy space} $H^2(\mathbb D^d)$ of the unit polydisc $\mathbb D^d$ is the reproducing kernel Hilbert space (see \cite{PR2016} for the definition of the reproducing kernel Hilbert space) associated with the {\it Cauchy kernel} 
\beqn
\kappa(z, w)=\prod_{j=1}^d(1-z_j\overline{w}_j)^{-1}, \quad z=(z_1, \ldots, z_d), ~w=(w_1, \ldots, w_d) \in \mathbb D^d,
\eeqn 
where $d$ is a positive integer. 
It is worth noting that for any $f \in H^2(\mathbb D^d),$ 
\beq \label{Hardy-norm-new-0}
\|f\|^2_{H^2(\mathbb D^d)} &=& \sum_{\alpha \in \mathbb Z^d_+}|\hat{f}(\alpha)|^2  \\ \label{Hardy-norm-new}
&=& \sup_{0 < r < 1}\int_{[0, 2\pi]^d}|f(re^{i \theta_1}, \ldots, re^{i \theta_d})|^2d\theta_1 \cdots d \theta_d
\eeq
(see \cite[Section~3.4]{Ru1969}).


For a nonempty subset $\Omega$ of $\mathbb C^d,$ let 
$M_+(\Omega)$ denote the set of finite positive Borel measures on $\Omega.$ Let 
$P_{\mu}(w)$ denote the \emph{Poisson integral} $\int_{\mathbb T} \frac{1-|w|^2}{|w-\zeta|^2}d\mu(\zeta)$ of the measure $\mu \in M_+(\mathbb T).$ 
For future reference, we record the following consequence of the Fubini theorem (see \cite[Theorem~8.8]{Ru1987}) and the fact that the mapping $r \mapsto \int_{\mathbb T} |f(z, re^{i \theta})|^2 d\theta$ is increasing.  
\begin{lemma} \label{increasing-lem}
For $f \in \mathcal O(\mathbb D^{d+1})$ and $\mu \in M_+(\mathbb D^d)$, the extended real-valued mapping  
$\phi(r)= \int_{\mathbb T} \int_{\mathbb D^d}|f(z, re^{i \theta})|^2 d\mu(z)d\theta,$ $r \in (0, 1),$ is increasing.  
\end{lemma}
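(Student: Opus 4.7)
The plan is to apply Tonelli's theorem to interchange the order of integration in the definition of $\phi(r)$, thereby reducing the claim to the one-variable fact already quoted in the preamble of the lemma. Explicitly, I would rewrite
$$\phi(r) \;=\; \int_{\mathbb D^d} M(z, r)\, d\mu(z), \qquad M(z, r) := \int_{\mathbb T} |f(z, re^{i\theta})|^2\, d\theta,$$
which is legitimate because the integrand $(z, \theta) \mapsto |f(z, re^{i\theta})|^2$ is jointly continuous, hence Borel measurable, on $\mathbb D^d \times \mathbb T$; being nonnegative, Tonelli's theorem applies without any integrability assumption (so $\phi$ may a priori take the value $+\infty$, as allowed by the extended real-valued formulation).

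Next, I would verify that for each fixed $z \in \mathbb D^d$ the map $r \mapsto M(z, r)$ is monotonically increasing on $(0,1)$. Since $w \mapsto f(z, w)$ is holomorphic on $\mathbb D$, one may expand it as a Taylor series $f(z, w) = \sum_{n \Ge 0} c_n(z)\, w^n$ and use orthogonality on $\mathbb T$ to obtain
$$M(z, r) \;=\; \sum_{n \Ge 0} |c_n(z)|^2\, r^{2n},$$
which is manifestly increasing in $r$. (Equivalently, $|f(z, \cdot)|^2$ is subharmonic on $\mathbb D$, so Hardy's convexity theorem yields the same conclusion.)

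Combining these two observations, for any $0 < r_1 < r_2 < 1$ the pointwise inequality $M(z, r_1) \Le M(z, r_2)$ holds for every $z \in \mathbb D^d$, and integrating against the positive measure $\mu$ preserves the inequality, producing $\phi(r_1) \Le \phi(r_2)$. I do not anticipate any substantive obstacle: the classical monotonicity of $L^2$-means of a holomorphic function and the measurability needed for Tonelli's theorem (which follows at once from joint continuity of $|f|^2$) are the only ingredients, and both are elementary.
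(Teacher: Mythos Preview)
Your proposal is correct and follows exactly the approach indicated in the paper, which records the lemma as a consequence of Fubini's theorem together with the one-variable fact that $r \mapsto \int_{\mathbb T} |f(z, re^{i\theta})|^2\, d\theta$ is increasing. You have simply supplied the routine details (Tonelli for the interchange, the power-series computation for the monotonicity) that the paper leaves implicit.
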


The formula \eqref{Arcozzi-formula} together with Richter's notion of Dirichlet-type spaces (see \cite[Sect.~3]{R1991}) motivates us to the following:

\begin{definition} 
For $\mu_1, \mu_2 \in M_+(\mathbb T)$ and $f \in \mathcal O(\mathbb D^2),$ {\it the Dirichlet integral} $D_{\mu_1, \mu_2}(f)$ of $f$ is given by
\beqn
D_{\mu_1, \mu_2}(f) &=& \sup_{0 < r < 1} \int_{\mathbb T} \int_{\mathbb D}|\partial_1 f(z_1, re^{i \theta})|^2 P_{\mu_1}(z_1) \,dA(z_1)d\theta \\
&+& \sup_{0 < r < 1} \int_{\mathbb T} \int_{\mathbb D}|\partial_2 f(re^{i \theta}, z_2)|^2 P_{\mu_2}(z_2) \,dA(z_2)d\theta.
\eeqn 
If either $\mu_1$ or $\mu_2$ is $0,$ then the {\it Dirichlet-type space} $\mathcal D(\mu_1, \mu_2)$ is the space of  functions $f \in H^2(\mathbb D^2)$ satisfying $D_{\mu_1, \mu_2}(f) < \infty.$ Otherwise, we set $\mathcal D(\mu_1, \mu_2) = \{f \in \mathcal O(\mathbb D^2) : D_{\mu_1, \mu_2}(f) < \infty\}.$
\end{definition}

Before we define a norm on the Dirichlet-type space $\mathcal D(\mu_1, \mu_2),$ we present a $2$-variable analog of \cite[Lemma 3.1]{R1991}. 
\begin{lemma} \label{Rmk-Hardy-inclusion}
For $\mu_1, \mu_2 \in M_+(\mathbb T),$
$\mathcal D(\mu_1, \mu_2) \subseteq H^2(\mathbb D^2).$ 
\end{lemma}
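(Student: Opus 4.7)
Since the inclusion is built into the definition when $\mu_1$ or $\mu_2$ is zero, fix $f \in \mathcal D(\mu_1, \mu_2)$ with both $\mu_1, \mu_2 \neq 0$. The plan is to reduce the problem to the one-variable statement \cite[Lem.~3.1]{R1991}, invoked in its quantitative form
\begin{equation*}
\|g\|^2_{H^2(\mathbb D)} \Le C_\mu\!\left(|g(0)|^2 + \int_{\mathbb D}|g'(z)|^2 P_\mu(z)\,dA(z)\right),
\end{equation*}
valid for every nonzero $\mu \in M_+(\mathbb T)$ and every $g \in \mathcal O(\mathbb D)$ (coming either from Richter's own argument or from the closed graph theorem applied to the one-variable Dirichlet-type Hilbert space). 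I would apply this once to the distinguished vertical slice through $z=0$ and then uniformly to the horizontal slices.

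First I would verify that $f(0, \cdot) \in H^2(\mathbb D)$. For each fixed $w \in \mathbb D$, the function $z \mapsto |\partial_2 f(z, w)|^2$ is subharmonic, so the sub-mean value inequality at $z=0$ gives $|\partial_2 f(0, w)|^2 \Le \int_{\mathbb T}|\partial_2 f(re^{i\theta}, w)|^2\,d\theta$ for any $r \in (0,1)$. Multiplying by $P_{\mu_2}(w)$, integrating in $w$, and applying Fubini shows that $\int_{\mathbb D}|\partial_2 f(0, w)|^2 P_{\mu_2}(w)\,dA(w) \Le D_{\mu_1,\mu_2}(f) < \infty$, so the one-variable lemma (with $\mu_2$) delivers $f(0, \cdot) \in H^2(\mathbb D)$ together with $\|f(0,\cdot)\|^2_{H^2(\mathbb D)} \Le C_{\mu_2}(|f(0,0)|^2 + D_{\mu_1,\mu_2}(f))$. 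Next, fix $r \in (0,1)$. By Fubini applied to the first summand of $D_{\mu_1,\mu_2}(f)$, the integral $\int_{\mathbb D}|\partial_1 f(z, re^{i\theta})|^2 P_{\mu_1}(z)\,dA(z)$ is finite for a.e.\ $\theta \in \mathbb T$, so the one-variable lemma (now with $\mu_1$) yields
\begin{equation*}
\|f(\cdot, re^{i\theta})\|^2_{H^2(\mathbb D)} \Le C_{\mu_1}\!\left(|f(0, re^{i\theta})|^2 + \int_{\mathbb D}|\partial_1 f(z, re^{i\theta})|^2 P_{\mu_1}(z)\,dA(z)\right)
\end{equation*}
for a.e.\ $\theta$. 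Integrating in $\theta$ and combining with the $H^2$-bound on $f(0,\cdot)$ and the definition of $D_{\mu_1,\mu_2}$ produces $\int_{\mathbb T}\|f(\cdot, re^{i\theta})\|^2_{H^2(\mathbb D)}\,d\theta \Le K$, for a constant $K$ independent of $r$.

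The last step is to pass from slices to the bidisc: the elementary estimate $\int_{\mathbb T}|f(re^{i\theta_1}, re^{i\theta})|^2\,d\theta_1 \Le \|f(\cdot, re^{i\theta})\|^2_{H^2(\mathbb D)}$, integrated in $\theta$ and supremized in $r$ via \eqref{Hardy-norm-new}, yields $\|f\|^2_{H^2(\mathbb D^2)} \Le K < \infty$. The principal difficulty I anticipate is ensuring that \cite[Lem.~3.1]{R1991} is usable as a genuine norm inequality rather than a set-theoretic inclusion, since without such a quantitative bound one cannot convert slice-wise membership into the final integral estimate. A secondary (but minor) subtlety is the asymmetry of the argument: the slice at $z=0$ must be handled first by subharmonicity before the other slices can be controlled in tandem.
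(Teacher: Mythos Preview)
Your argument is correct, but it is organized quite differently from the paper's. The paper proceeds by a direct Taylor-coefficient computation: writing $f(z_1,z_2)=\sum_{m,n\Ge 0} a_{m,n}z_1^m z_2^n$ and invoking the elementary lower bound $P_\mu(w)\Ge\tfrac{\mu(\mathbb T)}{4}(1-|w|^2)$, it shows that the two summands of $D_{\mu_1,\mu_2}(f)$ dominate (up to constants) $\sum_{m\Ge 1,\,n\Ge 0}|a_{m,n}|^2\tfrac{m}{m+1}r^{2n}$ and $\sum_{m\Ge 0,\,n\Ge 1}|a_{m,n}|^2\tfrac{n}{n+1}r^{2m}$ respectively, then lets $r\to 1^-$ by monotone convergence. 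This is self-contained, symmetric in the two variables, and never invokes the one-variable result. Your route instead treats \cite[Lemma~3.1]{R1991} as a quantitative black box and applies it slice by slice, with a subharmonicity step to bootstrap from the distinguished slice $z_1=0$. Both approaches yield explicit constants depending only on $\mu_1(\mathbb T)$ and $\mu_2(\mathbb T)$; the paper's is shorter and avoids the asymmetric bootstrap, while yours is more modular and anticipates the slice-reduction philosophy used repeatedly later (cf.\ \eqref{formula-D} and Lemma~\ref{multiplier}). One small caveat on your parenthetical: the closed-graph justification for the quantitative form of Richter's lemma is essentially circular (completeness of $\mathcal D(\mu)$ in the norm $|g(0)|^2+D_\mu(g)$ already requires the inequality), so in practice you must fall back on Richter's direct argument, which is exactly the Poisson lower bound the paper uses.
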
 
\begin{proof} By the definition of $\mathcal D(\mu_1, \mu_2),$ we may assume that both measures $\mu_1$ and $\mu_2$ are non-zero. 
Note that 
\beq \label{Poisson-lb} 
P_{\mu}(w) \Ge \frac{\mu(\mathbb T)}{4}(1-|w|^2), \quad \mu \in M_+(\mathbb T), ~w \in \mathbb D. 
\eeq
Thus, for any $f(z_1, z_2)=\sum_{m, n =0}^{\infty}a_{m, n}\,z^m_1z^n_2 \in \mathcal D(\mu_1, \mu_2),$ 
\beqn
&& \int_{\mathbb T} \int_{\mathbb D}|\partial_1 f(z_1, re^{i \theta})|^2 P_{\mu_1}(z_1) \,dA(z_1)d\theta \\
& \overset{\eqref{Poisson-lb} }\Ge & \frac{\mu_1(\mathbb T)}{4} \sum_{m =1}^{\infty} \sum_{n=0}^{\infty} |a_{m, n}|^2 m^2 r^{2n} \int_{\mathbb D} |z^{m-1}_1|^2 (1-|z_1|^2)dA(z_1) \\
&=& \frac{\mu_1(\mathbb T)}{4} \sum_{m =1}^{\infty} \sum_{n=0}^{\infty} |a_{m, n}|^2 \frac{m r^{2n}}{m+1}. 
\eeqn
A similar estimate using \eqref{Poisson-lb} gives
\beqn
&& \int_{\mathbb T} \int_{\mathbb D}|\partial_2 f(re^{i \theta}, z_2)|^2 P_{\mu_2}(z_2) \,dA(z_2)d\theta \Ge \frac{\mu_2(\mathbb T)}{4} \sum_{m =0}^{\infty} \sum_{n=1}^{\infty} |a_{m, n}|^2 \frac{nr^{2m}}{n+1}. 
\eeqn
Since $f \in \mathcal D(\mu_1, \mu_2),$
\beqn
 \sup_{0 < r < 1} \sum_{m =1}^{\infty} \sum_{n=0}^{\infty} |a_{m, n}|^2 \frac{m r^{2n}}{m+1} < \infty, \quad \sup_{0 < r < 1}  \sum_{m =0}^{\infty} \sum_{n=1}^{\infty} |a_{m, n}|^2 \frac{nr^{2m}}{n+1} < \infty.
 \eeqn
It is now easy to see using the monotone convergence theorem (see \cite[Theorem~1.26]{Ru1987}) that 
$f$ belongs to $H^2(\mathbb D^2).$ 
\end{proof}

In view of Lemmas \ref{increasing-lem} and \ref{Rmk-Hardy-inclusion}, the Dirichlet-type space $\mathcal D(\mu_1, \mu_2)$ can be endowed with the norm
\beqn
\|f\|^2_{\mathcal D(\mu_1, \mu_2)} &=& \|f\|^2_{H^2(\mathbb D^2)} + \lim_{r \rar 1^{-}} \int_{\mathbb T} \int_{\mathbb D}|\partial_1 f(z_1, re^{i \theta})|^2 P_{\mu_1}(z_1) \,dA(z_1)d\theta \\
&+& \lim_{r \rar 1^{-}} \int_{\mathbb T} \int_{\mathbb D}|\partial_2 f(re^{i \theta}, z_2)|^2 P_{\mu_2}(z_2) \,dA(z_2)d\theta. 
\eeqn
We see that $\mathcal D(\mu_1, \mu_2)$ is a reproducing kernel Hilbert space (see Lemma~\ref{r-kernel}).

The present paper is devoted to the study of Dirichlet-type spaces with efforts to understand the bidisc counter-part of the work carried out in \cite{R1991}.  
Before we state the main results of this paper, we need some definitions. 

Let $\Omega$ be a domain in $\mathbb C^d$ and 
let $\mathscr H$ be a Hilbert space such that $\mathscr H \subseteq \mathcal O(\Omega).$ 
A function $\varphi : \Omega \rar \mathbb C$ is said to be a {\it multiplier} of $\mathscr H$ if $\varphi f \in \mathscr H$ for every $f \in \mathscr H.$
For a nonempty subset $U$ of $\Omega,$ we say that {\it Gleason's problem can be solved for $\mathscr H$ over $U$} if for every $f \in \mathscr H$ and $\lambda \in U,$ there exist functions $g_1, \ldots, g_d$ in $\mathscr H$ such that
\beqn
f(z)=f(\lambda) + \sum_{j=1}^d (z_j-\lambda_j)g_j(z), \quad z=(z_1, \ldots, z_d) \in \Omega.
\eeqn
We say that {\it Gleason's problem can be solved for $\mathscr H$} if Gleason's problem can be solved for $\mathscr H$ over $\Omega$ (the reader is referred to \cite{Z1988} for a solution of Gleason's problem for Bergman and Bloch spaces of the unit ball). It turns out that Gleason's problem can be solved for $H^2(\mathbb D^d)$ (see Remark~\ref{rmk-abstract-G}).

\begin{definition} Let $\Omega$ be a domain in $\mathbb C^d$ and 
let $\mathscr H$ be a Hilbert space such that $\mathscr H \subseteq \mathcal O(\Omega).$ 
We say that 
$\mathscr H$ has 
the {\it $j$-division property}, $j=1, \ldots, d,$ if $\frac{f(z)}{z_j-\lambda_j}$ defines a function in $\mathscr H$  
whenever $\lambda \in \Omega,$ $f \in \mathscr H$ and  $\{z \in \Omega : z_j =\lambda_j\}$ is contained in $Z(f),$ the zero set of $f.$ If $\mathscr H$ has $j$-division property for every $j=1, \ldots, d,$ then we say that $\mathscr H$ has 
the {\it division property}.
\end{definition}
In case of $d=1,$ this property appeared in \cite[Definition~1.1]{AR1997}. 
One of the main results of this paper shows that $\mathcal D(\mu_1, \mu_2)$ has the division property. 
In what follows, we require a generalization of the notion of the wandering subspace introduced by Halmos (see \cite[P.~103]{H1961}).
\begin{definition} \label{def-ws}
Let $T=(T_1, \ldots, T_d)$ be a commuting $d$-tuple on $\mathcal H.$ A closed subspace $\mathcal W$ of $\mathcal H$ is said to be {\it wandering} for $T$ if for every $i=1, \ldots, d,$ 
\beqn
\prod_{{j=1}}^d T^{\alpha_j}_j \mathcal W \perp \prod_{j=1}^d T^{\beta_j}_j \mathcal W, \quad \alpha_j, \beta_j \in \mathbb Z_+, ~j=1, \ldots, d, ~\alpha_i =0, ~\beta_i \neq 0.
\eeqn 
\end{definition}
\begin{remark} \label{rmk-wandering}
If $d=1,$ then $\mathcal W$ is a wandering subspace for $T$ if and only if $\mathcal W \perp T^k(\mathcal W)$ for every integer $k \Ge 1.$  In particular, $\ker T^*$ is a wandering subspace for any $T \in \mathcal B(\mathcal H).$ Moreover, if $T=(T_1, \ldots, T_d)$ is a commuting $d$-tuple such that $ T^*_{j}T_{i}=T_{i}T^*_{j},$ $1 \Le i \neq j \Le d,$ then $\ker T^*$ is a wandering subspace for $T.$ 
\end{remark} 
It follows from Remark~\ref{rmk-wandering} that the space spanned by the constant function $1$ is a wandering subspace for the multiplication $2$-tuple $\mathscr M_z$ on $H^2(\mathbb D^2).$ Interestingly, this fact extends to the multiplication $2$-tuple $\mathscr M_z$ on $\mathcal D(\mu_1, \mu_2)$ (see Corollary~\ref{wandering-s}).   


Recall that a commuting $d$-tuple $T=(T_1, \ldots, T_d)$ on $\mathcal H$ is {\it cyclic} with {\it cyclic vector} $f_0 \in \mathcal H$ if $\bigvee \big\{T^{\alpha}f_0 :  \alpha \in \mathbb Z^d_+\big\}=\mathcal H,$ where $\bigvee$ denotes the closed linear span and $T^{\alpha} =\prod_{j=1}^d T^{\alpha_j}_j.$ 
For later purpose, we state the following property of cyclic tuples (see \cite[Proposition~1.1]{AW1990}):
\beq \label{cyclic-kernel}
\mbox{\it If $T$ is cyclic, then for any $\omega \in \mathbb C^d,$  $\dim \ker(T^* - \omega)$ is at most $1,$} 
\eeq
where $\ker S=\cap_{j=1}^d \ker S_j$ for the $d$-tuple $S=(S_1, \ldots, S_d)$  
and $\dim$ stands for the Hilbert space dimension.
A commuting $d$-tuple $T$ on $\mathcal H$ has the {\it wandering subspace property} if
 $\mathcal H=\bigvee
_{\alpha \in \mathbb Z_+} T^\alpha(\ker T^*).$
Following \cite[P.~56]{EL2018}, we say that a commuting $d$-tuple $T=(T_1, \ldots, T_d)$ on $\mathcal H$ is {\it analytic} if 
$$\bigcap_{k=0}^{\infty} \sum_{\alpha \in \Gamma_k}T^{\alpha}\mathcal H = \{0\},$$ where, for $k \in \mathbb Z_+,$ $\Gamma_k := \{\alpha =(\alpha_1, \ldots, \alpha_d) \in \mathbb Z^d_+: \alpha_1 + \cdots + \alpha_d = k\}.$ 
Note that if $T$ is analytic, then $T_1, \ldots, T_d$ are analytic. 

Let $\Omega$ be a domain in $\mathbb C^d.$
  For a positive integer $n$, let ${\bf B}_n(\Omega)$ denote the set
  of all commuting $d$-tuples $T$ on $\mathcal H$ satisfying the following \index{${\bf B}_n(\Omega)$}
  conditions:
  \begin{enumerate}
  \item[$\bullet$]  for every $\omega=(\omega_1, \ldots, \omega_d) \in \Omega$, the map $D_{T- \omega}(x) = ((T_j-\omega_j)x)_{j=1}^d$ from $\mathcal H$ into $\mathcal H^{\oplus d}$ has closed range and $\dim{\ker({T-\omega})} = n,$
     \item[$\bullet$] the subspace
    $\bigvee_{\omega \in \Omega}  \ker({T-\omega})$ of $\mathcal H$ equals $\mathcal H$.
  \end{enumerate}
  We call the set ${\bf B}_n(\Omega)$ the {\it Cowen-Douglas class
    of rank $n$ with respect to $\Omega$} (refer to \cite{CD1978, CuS1984} for the basic theory of Cowen-Douglas class).

\section{Statements of main theorems}

The following three theorems collect several basic properties of Dirichlet-type spaces $\mathcal D(\mu_1, \mu_2).$ 
\begin{theorem} \label{thm1} For $\mu_1, \mu_2 \in M_+(\mathbb T),$ we have the following statements$:$
\begin{enumerate}
\item[$\mathrm{(i)}$] the coordinate functions $z_1, z_2$ are multipliers of $\mathcal D(\mu_1, \mu_2),$
\item[$\mathrm{(ii)}$] the polynomials are dense in $\mathcal D(\mu_1, \mu_2),$
\item[$\mathrm{(iii)}$] for non-negative integers $k, l$  and a polynomial $p$ in $z_1$ and $z_2,$ 
\beq \notag 
 \|z^k_1z^l_2p\|^2_{\mathcal D(\mu_1, \mu_2)} 
&=& \|p\|^2_{\mathcal D(\mu_1, \mu_2)} + k \int_{\mathbb T^2} |p(e^{i \eta},  e^{i \theta})|^2 d\mu_1(\eta)d\theta \\
&+& l \int_{\mathbb T^2} |p(e^{i \theta}, e^{i \eta})|^2 d\mu_2(\eta)d\theta. \label{formula-Richter}
\eeq 
\end{enumerate}
\end{theorem}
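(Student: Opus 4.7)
The plan is to establish (iii) first, since it is a purely polynomial computation that bootstraps both (i) and (ii). For a two-variable polynomial $p$ and fixed $z_2 \in \mathbb D$, I would apply Richter's one-variable identity from \cite{R1991} to the slice $q(z_1) := p(z_1, z_2)$, namely
\[
\int_{\mathbb D} |(z_1 q)'(z_1)|^2 P_{\mu_1}(z_1)\, dA(z_1) = \int_{\mathbb D} |q'(z_1)|^2 P_{\mu_1}(z_1)\, dA(z_1) + \int_{\mathbb T} |q(\zeta)|^2\, d\mu_1(\zeta).
\]
Replacing $z_2$ by $re^{i\theta}$, integrating in $\theta$, and passing to $r \to 1^{-}$ (with Lemma \ref{increasing-lem} legitimizing the monotone limits) yields the identity for the first Dirichlet piece. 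For the second Dirichlet piece note that $\partial_2(z_1 p) = z_1\, \partial_2 p$, so its integrand acquires a factor $|z_1|^2$ producing $r^2\, D_{\mu_2}(p;r)$ at radius $r$, whose limit equals the second Dirichlet piece of $p$. Combined with $\|z_1 p\|_{H^2(\mathbb D^2)} = \|p\|_{H^2(\mathbb D^2)}$, this establishes (iii) for $(k,l) = (1,0)$; the $(0,1)$ case is symmetric, and iteration (using $|e^{i\eta}| = |e^{i\theta}| = 1$ at each step) gives the general formula.

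For (ii), given $f \in \mathcal D(\mu_1, \mu_2)$ and $\rho \in (0,1)$, I would use the dilation $f_\rho(z) := f(\rho z)$. Each $f_\rho$ extends holomorphically past $\overline{\mathbb D}^2$, so its Taylor partial sums approach $f_\rho$ uniformly on $\overline{\mathbb D}^2$, and therefore also in the $\mathcal D(\mu_1, \mu_2)$-norm. It then suffices to prove $\|f_\rho - f\|_{\mathcal D(\mu_1, \mu_2)} \to 0$. Writing $f = \sum_{m,n} a_{m,n} z_1^m z_2^n$, the Hardy part vanishes in the limit via dominated convergence applied to $(1 - \rho^{m+n})^2 |a_{m,n}|^2$. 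For the Dirichlet pieces, polarize
\[
D_{\mu_1}(f_\rho - f) = D_{\mu_1}(f_\rho) + D_{\mu_1}(f) - 2\,\mathrm{Re}\,\langle f_\rho, f \rangle_{D_{\mu_1}},
\]
and apply dominated convergence to the Fourier expansion of each inner product, using the summable majorant coming from the Fourier coefficients of $P_{\mu_1}$ being uniformly bounded by $\mu_1(\mathbb T)$ and from $\|f\|^2_{\mathcal D(\mu_1, \mu_2)} < \infty$; symmetric reasoning handles $D_{\mu_2}$.

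For (i), I would combine (iii) with Richter's one-variable result from \cite{R1991} that multiplication by $z$ is bounded on $\mathcal D(\mu_j)$, equivalently that $\int_{\mathbb T} |q|^2 d\mu_j \leq C_j \|q\|^2_{\mathcal D(\mu_j)}$ for every polynomial $q \in \mathbb C[z]$. Applied slice-wise with $z_2 = e^{i\theta}$ fixed, and then integrated in $\theta$, this gives
\[
\int_{\mathbb T^2} |p(e^{i\eta}, e^{i\theta})|^2 d\mu_1(\eta)\, d\theta \leq C_1 \|p\|^2_{\mathcal D(\mu_1, \mu_2)}
\]
for polynomial $p$, since the slice-wise $H^2(\mathbb D)$-norm integrated over $\theta$ equals $\|p\|^2_{H^2(\mathbb D^2)}$ and the corresponding slice-wise Dirichlet integral equals $D_{\mu_1}(p)$. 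Together with (iii) this yields $\|z_1 p\|_{\mathcal D(\mu_1, \mu_2)} \leq \sqrt{1 + C_1}\, \|p\|_{\mathcal D(\mu_1, \mu_2)}$ for polynomial $p$. Density (ii) and an RKHS-continuity argument (possible by Lemma \ref{Rmk-Hardy-inclusion}) extend this to all of $\mathcal D(\mu_1, \mu_2)$: for polynomials $p_n \to f$ in $\mathcal D$, the sequence $\{z_1 p_n\}$ is Cauchy, its $\mathcal D$-limit agrees with $z_1 f$ pointwise, and hence $z_1 f \in \mathcal D$ with the same norm bound; the case of $z_2$ is identical.

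The hard part will be (ii): since $D_{\mu_1}$ has no diagonal structure in the Fourier coefficients of $f$, showing $\|f_\rho - f\|_{\mathcal D} \to 0$ requires a careful polarization together with dominated convergence on a double Fourier sum, and producing an integrable majorant via Cauchy--Schwarz applied to the finite quantity $\|f\|^2_{\mathcal D(\mu_1, \mu_2)}$ is the most delicate step in the whole argument.
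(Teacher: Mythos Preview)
Your argument for (iii) is essentially the paper's: slice-wise application of Richter's one-variable identity $D_{\mu}(w^k q)=D_{\mu}(q)+k\int_{\mathbb T}|q|^2\,d\mu$, integrated in $\theta$ and pushed to the limit via Lemma~\ref{increasing-lem}. No issues there.

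The gap is in (ii). Your plan is to expand $D_{\mu_1}(f_\rho-f)$ (or its polarization) as a Fourier double sum and pass to the limit by dominated convergence, with the majorant supplied by $|\hat\mu_1(j)|\le\mu_1(\mathbb T)$ together with $\|f\|_{\mathcal D(\mu_1,\mu_2)}<\infty$. But such a majorant need not be summable: using \eqref{formula-o}, the absolute Fourier sum controlling $D_{\mu_1}$ in the first variable is
\[
\sum_{n}\sum_{m,p\ge 1}|a_{m,n}|\,|a_{p,n}|\,\min\{m,p\}\,|\hat\mu_1(p-m)|,
\]
and this can diverge even for $f\in\mathcal D(\mu_1,\mu_2)$. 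Take $\mu_1=\delta_1$ (so $\hat\mu_1\equiv 1$), $\mu_2=0$, and $f(z_1,z_2)=-\log(1+z_1)=\sum_{m\ge 1}\frac{(-1)^m}{m}z_1^m$. A direct estimate of $\int_{\mathbb D}|f'(z_1)|^2 P_{\delta_1}(z_1)\,dA(z_1)$ shows $f\in\mathcal D(\delta_1,0)$, yet the absolute sum above equals $\sum_{m,p\ge 1}\frac{\min\{m,p\}}{mp}=\infty$. So dominated convergence on the Fourier side is not available, and the step you yourself flagged as ``the most delicate'' does not go through as stated.

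The paper bypasses Fourier analysis entirely. It first proves the contractivity $D_{\mu_1,\mu_2}(f_R)\le D_{\mu_1,\mu_2}(f)$ by reducing to Sarason's one-variable inequality $D_{\mu}(g_\rho)\le D_{\mu}(g)$ slice by slice, and then uses the parallelogram identity
\[
D_{\mu_1,\mu_2}(f-f_R)+D_{\mu_1,\mu_2}(f+f_R)=2\big(D_{\mu_1,\mu_2}(f)+D_{\mu_1,\mu_2}(f_R)\big)\le 4\,D_{\mu_1,\mu_2}(f)
\]
together with Fatou's lemma (giving $\liminf D_{\mu_1,\mu_2}(f+f_R)\ge 4\,D_{\mu_1,\mu_2}(f)$) to force $D_{\mu_1,\mu_2}(f-f_R)\to 0$. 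This is the missing idea in your outline.

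A secondary remark on (i): your route proves boundedness on polynomials and then invokes (ii) to extend. Since your (ii) currently has a gap, so does your (i). The paper avoids this dependency by proving (i) directly for every $f\in\mathcal D(\mu_1,\mu_2)$: one applies Richter's one-variable bound $\|\mathscr M_w g\|_{\mathcal D(\mu_1)}\le \|\mathscr M_w\|\,\|g\|_{\mathcal D(\mu_1)}$ to each slice $f(\cdot,re^{i\theta})$ (legitimate by \eqref{rmk-slice-theta}), integrates in $\theta$, and combines with \eqref{Hardy-fact}. This is the same slice-wise mechanism you describe, just run on general $f$ rather than on polynomials, so (i) stands on its own.
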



\begin{theorem}
\label{D-mu-estimate}
For $\mu_1, \mu_2 \in M_+(\mathbb T),$ 
$\mathcal D(\mu_1, \mu_2)$ has the division property.
\end{theorem}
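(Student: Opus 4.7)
The plan is to reduce the $1$-division property to a coercivity estimate for $\mathscr M_{z_1}$ on $\mathcal D(\mu_1, \mu_2)$ combined with polynomial approximation; the $2$-division property then follows by symmetry with the roles of $z_1$ and $z_2$ interchanged. Fix $\lambda = (\lambda_1, \lambda_2) \in \mathbb D^2$ and $f \in \mathcal D(\mu_1, \mu_2)$ with $\{z \in \mathbb D^2 : z_1 = \lambda_1\} \subseteq Z(f)$. Then $g(z) := f(z)/(z_1 - \lambda_1)$ is automatically holomorphic on $\mathbb D^2$, so the task reduces to verifying that $g \in \mathcal D(\mu_1, \mu_2)$.

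The first step is to observe that Theorem~\ref{thm1}(iii) makes $\mathscr M_{z_1}$ a (single-variable) $2$-isometry on $\mathcal D(\mu_1, \mu_2)$: applying \eqref{formula-Richter} with $(k,l) \in \{(1,0), (2,0)\}$ to a polynomial $p$, subtracting, and extending by Theorem~\ref{thm1}(i)--(ii), yields $\|\mathscr M_{z_1}^2 h\|^2 - 2\|\mathscr M_{z_1} h\|^2 + \|h\|^2 = 0$ for every $h \in \mathcal D(\mu_1, \mu_2)$. In particular $\|\mathscr M_{z_1} h\| \Ge \|h\|$, and a reverse triangle inequality then produces the key coercivity bound
\begin{equation} \label{coerc-prop}
\|(z_1 - \lambda_1) h\|_{\mathcal D(\mu_1, \mu_2)} \Ge (1 - |\lambda_1|)\,\|h\|_{\mathcal D(\mu_1, \mu_2)}, \quad h \in \mathcal D(\mu_1, \mu_2).
\end{equation}

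With \eqref{coerc-prop} in hand, I would approximate $f$ by polynomials $p_n \rar f$ in $\mathcal D(\mu_1, \mu_2)$ (Theorem~\ref{thm1}(ii)) and replace each $p_n$ by $\tilde p_n(z_1, z_2) := p_n(z_1, z_2) - p_n(\lambda_1, z_2)$, which vanishes identically on $\{z_1 = \lambda_1\}$ and hence factors as $\tilde p_n = (z_1 - \lambda_1) q_n$ for a polynomial $q_n$. Granting for the moment that $\tilde p_n \rar f$ in $\mathcal D(\mu_1, \mu_2)$, applying \eqref{coerc-prop} to the differences $q_n - q_m$ shows that $\{q_n\}$ is Cauchy; its limit $q \in \mathcal D(\mu_1, \mu_2)$ satisfies $(z_1 - \lambda_1)q = f$ pointwise, forcing $q = g$ and concluding the argument.

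The technical heart is therefore to verify that $p_n(\lambda_1, z_2) \rar 0$ in the $\mathcal D(\mu_1, \mu_2)$-norm, and this is the step I expect to be the main obstacle. Since the slice $p_n(\lambda_1, z_2)$ is independent of $z_1$, its $\mathcal D(\mu_1, \mu_2)$-norm coincides with the one-variable $\mathcal D(\mu_2)$-norm of $z_2 \mapsto p_n(\lambda_1, z_2)$, so what is needed is the continuity of the slice evaluation $F \mapsto F(\lambda_1, \cdot)$ from $\mathcal D(\mu_1, \mu_2)$ into $\mathcal D(\mu_2)$. The $H^2(\mathbb D)$ part of the target norm is controlled by a direct Cauchy--Schwarz bound on Fourier coefficients in $z_1$. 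For the more delicate derivative term $\int_{\mathbb D}|\partial_2 F(\lambda_1, z_2)|^2 P_{\mu_2}(z_2)\, dA(z_2)$, the plan is to fix $r \in (|\lambda_1|, 1)$, represent the holomorphic slice $z_1 \mapsto \partial_2 F(z_1, z_2)$ on $|z_1| \Le r$ via the Poisson integral, bound the Poisson kernel uniformly in the angular variable by $(r + |\lambda_1|)/(r - |\lambda_1|)$, apply Fubini, and let $r \rar 1^-$ via Lemma~\ref{increasing-lem} to finish.
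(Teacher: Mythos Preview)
Your argument is correct, but it follows a genuinely different route from the paper's proof. The paper works \emph{slicewise}: given $(z_1-\lambda)h \in \mathcal D(\mu_1,\mu_2)$, it first invokes the division property of $H^2(\mathbb D^2)$ to place $h$ in $H^2(\mathbb D^2)$, then for each fixed $r,\theta$ applies Richter's one-variable division property in $\mathcal D(\mu_1)$ to the slice $w\mapsto (w-\lambda)h(w,re^{i\theta})$, combines this with the one-variable expansivity estimate $\|(w-\lambda)u\|_{\mathcal D(\mu_1)}\Ge (1-|\lambda|)\|u\|_{\mathcal D(\mu_1)}$, and integrates in $\theta$ via Lemma~\ref{Hardy-slice}. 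The $D_{\mu_2}$-part is handled directly by pulling out the scalar factor $|se^{i\theta}-\lambda|\Ge s-|\lambda|$ from the integrand. No polynomial approximation or global coercivity on $\mathcal D(\mu_1,\mu_2)$ is used.

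Your approach is instead a closed-range argument in disguise: you establish the global coercivity \eqref{coerc-prop} from the $2$-isometry identity of Theorem~\ref{thm1}(iii), and then use polynomial density (Theorem~\ref{thm1}(ii)) together with a slice-evaluation continuity lemma $F\mapsto F(\lambda_1,\cdot)\colon \mathcal D(\mu_1,\mu_2)\to\mathcal D(\mu_2)$ to produce an approximating Cauchy sequence of quotients. The Poisson-kernel estimate you sketch for the $D_{\mu_2}$-term is valid (subharmonicity of $|\partial_2 F(\cdot,z_2)|^2$ would also do), and any fixed $r\in(|\lambda_1|,1)$ already suffices since the right-hand integral is dominated by $D_{\mu_1,\mu_2}(F)$ by Lemma~\ref{increasing-lem}. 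The trade-off is that your proof \emph{consumes} Theorem~\ref{thm1} (boundedness of $\mathscr M_{z_1}$, polynomial density, Richter's formula), whereas the paper's proof is essentially independent of it and reduces directly to the one-variable theory; on the other hand, your slice-continuity lemma is a clean reusable byproduct, and your argument makes transparent that the division property is really the statement that $\mathscr M_{z_1}-\lambda_1$ has closed range.
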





\begin{theorem} \label{Gleason-new} For $\mu_1, \mu_2 \in M_+(\mathbb T),$ Gleason's problem can be solved for $\mathcal D(\mu_1, \mu_2)$ over $\mathbb D^2_r$ for some $r \in (0, 1).$ 
\end{theorem}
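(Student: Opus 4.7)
The plan is to solve Gleason's problem by a ``slice-and-divide'' decomposition that reduces it to two applications of the division property established in Theorem~\ref{D-mu-estimate}. Given $f \in \mathcal{D}(\mu_1, \mu_2)$ and $\lambda=(\lambda_1,\lambda_2) \in \mathbb{D}^2_r$, with $r$ to be determined, I would write
\begin{equation*}
f(z) - f(\lambda) \,=\, \bigl[f(z_1, z_2) - f(\lambda_1, z_2)\bigr] \,+\, \bigl[f(\lambda_1, z_2) - f(\lambda_1, \lambda_2)\bigr].
\end{equation*}
The first bracketed expression vanishes on $\{z_1=\lambda_1\} \cap \mathbb{D}^2$ and the second on $\{z_2=\lambda_2\} \cap \mathbb{D}^2$, so once both summands are known to lie in $\mathcal{D}(\mu_1,\mu_2)$, Theorem~\ref{D-mu-estimate} produces the desired $g_1, g_2 \in \mathcal{D}(\mu_1, \mu_2)$ satisfying $f(z)-f(\lambda)=(z_1-\lambda_1)g_1(z)+(z_2-\lambda_2)g_2(z)$.

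The crux of the argument is therefore to show that the slice $\tilde f(z_1, z_2) := f(\lambda_1, z_2)$, regarded as a bivariate function constant in $z_1$, belongs to $\mathcal{D}(\mu_1, \mu_2)$; the constant $f(\lambda)$ trivially does. I would expand $f = \sum_{m \Ge 0} z_1^m f^m$ with $f^m(z_2) = \sum_{n \Ge 0} a_{m,n} z_2^n$, so that $\tilde f = \sum_{m \Ge 0} \lambda_1^m f^m$, where each $f^m$ is viewed as an element of $\mathcal{O}(\mathbb{D}^2)$ independent of $z_1$. Since $\partial_1 f^m \equiv 0$, one computes
\begin{equation*}
\|f^m\|^2_{\mathcal{D}(\mu_1, \mu_2)} \,=\, \|f^m\|^2_{H^2(\mathbb{D})} + \int_{\mathbb{D}}\bigl|(f^m)'(z_2)\bigr|^2 P_{\mu_2}(z_2)\,dA(z_2),
\end{equation*}
and unpacking the two Dirichlet integrals in $\|f\|^2_{\mathcal{D}(\mu_1,\mu_2)}$ along the $z_1$-degrees (using orthogonality of $\{e^{im\theta}\}$ in the $\theta$-variable and the monotone convergence provided by Lemma~\ref{increasing-lem}) gives the key estimate $\sum_{m \Ge 0} \|f^m\|^2_{\mathcal{D}(\mu_1, \mu_2)} \Le \|f\|^2_{\mathcal{D}(\mu_1, \mu_2)}$. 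A Cauchy--Schwarz application then yields
\begin{equation*}
\|\tilde f\|_{\mathcal{D}(\mu_1, \mu_2)} \,\Le\, \sum_{m \Ge 0} |\lambda_1|^m \|f^m\|_{\mathcal{D}(\mu_1, \mu_2)} \,\Le\, \frac{\|f\|_{\mathcal{D}(\mu_1, \mu_2)}}{\sqrt{1-|\lambda_1|^2}},
\end{equation*}
which simultaneously delivers absolute convergence of the series defining $\tilde f$ in $\mathcal{D}(\mu_1, \mu_2)$ and membership of $\tilde f$ in the space for every $\lambda_1 \in \mathbb{D}$.

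With $\tilde f \in \mathcal{D}(\mu_1, \mu_2)$ in hand, the $1$-division part of Theorem~\ref{D-mu-estimate} yields $g_1 := (f-\tilde f)/(z_1-\lambda_1) \in \mathcal{D}(\mu_1, \mu_2)$, while the $2$-division part yields $g_2 := (\tilde f-f(\lambda))/(z_2-\lambda_2) \in \mathcal{D}(\mu_1, \mu_2)$, completing the decomposition. I expect the main technical obstacle to be the sum-of-slices estimate, which requires a careful unpacking of $D_{\mu_1, \mu_2}(f)$: one has to justify termwise interchanges of sums, integrals and $r \to 1^-$ limits in the definition of the Dirichlet integrals and to verify that the partial $z_1$-degree decomposition respects the norm (the $\partial_1 f^m$'s all vanish, while the mixed cross-terms in the first Dirichlet integral disappear after averaging in $\theta$). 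Since the resulting bound is uniform on compact subsets of $\mathbb{D}$, this route in fact delivers Gleason's problem on all of $\mathbb{D}^2$; the weaker form stated over $\mathbb{D}^2_r$ may simply reflect a more abstract approach in the paper, perhaps via closed-range considerations tied to the Cowen--Douglas class $\mathbf{B}_1(\mathbb{D}^2_r)$ announced in the abstract.
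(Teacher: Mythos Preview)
Your argument is correct and in fact proves more than the paper does: your slice estimate shows $f(\lambda_1,\cdot)\in\mathcal D(\mu_1,\mu_2)$ for every $\lambda_1\in\mathbb D$, so combined with Theorem~\ref{D-mu-estimate} you solve Gleason's problem over the full bidisc $\mathbb D^2$, not just over some $\mathbb D^2_r$. The paper explicitly leaves this open (see the sentence after Theorem~\ref{Gleason-new} and the concluding section), so your route genuinely improves on the stated theorem.

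The paper takes a different, more operator-theoretic path. It proves only the special case $\lambda_1=0$ of your slice lemma (Lemma~\ref{contractive-h}: $f(0,\cdot)\in\mathcal D(\mu_1,\mu_2)$), uses that together with the division property (via Lemma~\ref{division-slice}) to solve Gleason at the origin, and then invokes Lemma~\ref{abstract-Gleason-p} to translate this into the closed-range condition for $D^*_{\mathscr M^*_z}$. Combined with Corollaries~\ref{T-spectrum} and \ref{exact-middle}, this makes $\mathscr M_z$ Fredholm at $0$; openness of the Fredholm set then yields a bidisc $\mathbb D^2_r$ on which Gleason holds by applying Lemma~\ref{abstract-Gleason-p} once more. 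Your approach bypasses the Fredholm machinery entirely: the Parseval decomposition of the second Dirichlet integral along $z_1$-degrees gives $\sum_m\|f^m\|^2_{\mathcal D(\mu_1,\mu_2)}\le\|f\|^2_{\mathcal D(\mu_1,\mu_2)}$ (with equality up to the first Dirichlet integral), and Cauchy--Schwarz handles every $\lambda_1\in\mathbb D$ at once. The paper's approach, on the other hand, directly yields the Cowen--Douglas membership $\mathscr M^*_z\in{\bf B}_1(\mathbb D^2_r)$ needed for Theorem~\ref{model-thm}, whereas your argument would still have to appeal to Lemma~\ref{abstract-Gleason-p} and Corollary~\ref{exact-middle} afterward to extract that.
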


Here $\mathbb D^2_r$ denotes the bidisc $\{(z_1, z_2) \in \mathbb C^2 : |z_1|< r, |z_2|< r\},$ where $r$ is a positive real number. Unlike the one variable situation, we do not know whether Gleason's problem can be solved for $\mathcal D(\mu_1, \mu_2)$ over the unit bidisc. 
It is worth noting that not all facts about Dirichlet-type spaces of the unit disc have successful counterparts in the bidisc case. For example, the commuting pair $\mathscr M_z=(\mathscr M_{z_1}, \mathscr M_{z_2})$ on $\mathcal D(\mu_1, \mu_2)$ fails to be essentially normal (see Corollary~\ref{coro-e-n}). Moreover, the verbatim analog of the model theorem \cite[Theorem~5.1]{R1991} does not hold true (see Remark~\ref{model-rmk}). 

The following result asserts that $\mathscr M_z$ on $\mathcal D(\mu_1, \mu_2)$ is a canonical model for 
analytic $2$-isometries for which $\ker T^*$ is a cyclic wandering subspace.
 
\begin{theorem}[A representation theorem] \label{model-thm}
Let $T=(T_1, T_2)$ be a commuting pair on $\mathcal H.$ Then the following statements are equivalent:
\begin{enumerate}[\rm(i)]
\item $T$ is a cyclic analytic toral $2$-isometry with cyclic vector $f_0 \in \ker T^*$ and $\ker T^*$ is a wandering subspace for $T,$
\item $T$ is a cyclic toral $2$-isometry with cyclic vector $f_0 \in \ker T^*$, $T^*$ belongs to ${\bf B}_1(\mathbb D^2_r)$ for some $r \in (0, 1)$ and $\ker T^*$ is a wandering subspace for~$T,$
\item there exist $\mu_1, \mu_2 \in M_+(\mathbb T)$ such that $T$ is unitarily equivalent to $\mathscr M_z$ on $\mathcal D(\mu_1, \mu_2).$
\end{enumerate}
\end{theorem}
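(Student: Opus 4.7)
The strategy is to prove (iii) $\Rightarrow$ (i), (iii) $\Rightarrow$ (ii), (ii) $\Rightarrow$ (i), and the main implication (i) $\Rightarrow$ (iii), the last being the substantive content. The first two follow from the properties of $\mathscr M_z$ on $\mathcal D(\mu_1, \mu_2)$ already established in the paper: it is a cyclic toral $2$-isometry with cyclic vector the constant function $1$, $\mathscr M_z^*$ belongs to ${\bf B}_1(\mathbb D^2_r)$ for some $r \in (0,1)$, $\ker \mathscr M_z^* = \mathbb C \cdot 1$ is wandering (Corollary~\ref{wandering-s}), and analyticity follows from $\mathcal D(\mu_1, \mu_2) \subseteq \mathcal O(\mathbb D^2)$, since any $f \in \bigcap_k \sum_{|\alpha|=k} z^\alpha \mathcal D(\mu_1, \mu_2)$ vanishes to infinite order at $0$, hence $f = 0$. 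For (ii) $\Rightarrow$ (i), pick joint eigenvectors $k_\omega$ of $T^*$ for $\omega \in \mathbb D^2_r$ (whose span is dense in $\mathcal H$ by the ${\bf B}_1$ assumption); any $h \in \bigcap_k \sum_{|\alpha|=k} T^\alpha \mathcal H$ satisfies $\langle h, k_\omega \rangle = 0$ to infinite order at $\omega = 0$, and since $\omega \mapsto \langle h, k_\omega \rangle$ is antiholomorphic on $\mathbb D^2_r$, it vanishes identically, forcing $h = 0$.

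For (i) $\Rightarrow$ (iii), normalize $\|f_0\| = 1$ and set $B_i := T_i^* T_i - I$. The one-variable relation $T_i^{*2} T_i^2 = 2 T_i^* T_i - I$ gives $T_i^{*n} T_i^n = I + n B_i$ inductively, which forces $B_i \Ge 0$. The cross relation in \eqref{C1} for $i \neq j$ rearranges to the crucial cross-invariance $T_j^* B_i T_j = T_j^* T_i^* T_i T_j - T_j^* T_j = (T_i^* T_i + T_j^* T_j - I) - T_j^* T_j = B_i$. Define $c_n := \langle B_1 T_1^n f_0, f_0 \rangle$ for $n \in \mathbb Z_+$, extended by Hermitian symmetry. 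Using $T_1^{*k} B_1 T_1^j = B_1 T_1^{j-k}$ for $j \Ge k$ (a direct calculation from $T_1^{*k} T_1^k = I + kB_1$), one obtains $\langle B_1 T_1^j f_0, T_1^k f_0 \rangle = c_{j-k}$; positive definiteness of this Toeplitz kernel follows from $B_1 \Ge 0$, and Herglotz's theorem supplies $\mu_1 \in M_+(\mathbb T)$ with $\hat\mu_1(n) = c_n$. Construct $\mu_2$ symmetrically.

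Combining the iterated cross-invariance $T_2^{*n} B_1 T_2^n = B_1$ with the wandering property of $\mathbb C f_0$ (Definition~\ref{def-ws}), one verifies that $\langle B_1 T_1^m T_2^n f_0, T_1^k T_2^l f_0 \rangle = \hat\mu_1(m-k) \delta_{n,l}$: the diagonal case $n = l$ reduces to the one-variable computation after moving $T_2^{*n}$ past $T_1^m$, while in the off-diagonal case the residue, after the same reduction, is annihilated by wandering, which provides $\langle T_1^a f_0, T_1^b T_2^c f_0 \rangle = 0$ for $c > 0$. The analogous statement holds for $B_2$. Define $V : \mathbb C[z_1, z_2] \to \mathcal H$ by $V(p) = p(T) f_0$ and prove the identity $\langle T^\alpha f_0, T^\beta f_0 \rangle = \langle z^\alpha, z^\beta \rangle_{\mathcal D(\mu_1, \mu_2)}$ by induction on $|\alpha| + |\beta|$. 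When $\alpha_1, \beta_1 \Ge 1$, write $\langle T^\alpha f_0, T^\beta f_0 \rangle = \langle T^{\alpha'} f_0, T^{\beta'} f_0 \rangle + \langle B_1 T^{\alpha'} f_0, T^{\beta'} f_0 \rangle$ with $\alpha' = \alpha - (1,0)$, and match this on the model side by polarizing Theorem~\ref{thm1}(iii), namely $\langle z_1 p, z_1 q \rangle_{\mathcal D(\mu_1, \mu_2)} = \langle p, q \rangle + \int_{\mathbb T^2} p\bar q \, d\mu_1(\eta) d\theta$; the cases where one first coordinate vanishes while the other does not are handled because both inner products vanish (on the operator side by wandering, on the model side by the orthogonality of $z_1^{\alpha_1} z_2^{\alpha_2}$ and $z_2^{\beta_2}$ computed directly from the norm definition). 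Density of polynomials (Theorem~\ref{thm1}(ii)) and cyclicity of $f_0$ then extend $V$ to a unitary $\mathcal D(\mu_1, \mu_2) \to \mathcal H$; by construction $V \mathscr M_{z_j} = T_j V$ on polynomials, giving the desired unitary equivalence.

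The principal obstacle is closing this induction matching $\langle T^\alpha f_0, T^\beta f_0 \rangle$ with $\langle z^\alpha, z^\beta \rangle_{\mathcal D(\mu_1, \mu_2)}$. One needs the cross-invariance $T_j^* B_i T_j = B_i$ (which forces the Dirichlet forms to be product measures with Lebesgue in the complementary angular variable), the specific wandering property of Definition~\ref{def-ws} (which kills the off-diagonal angular terms), and the polarized version of Theorem~\ref{thm1}(iii) on the model side. Weakening either the wandering hypothesis or removing the cross relation from \eqref{C1} would spoil the identification with the Dirichlet-type norm.
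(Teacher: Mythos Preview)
Your argument is correct in substance, but it proceeds rather differently from the paper's.

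For (i)$\Rightarrow$(iii), the paper does \emph{not} build $\mu_1,\mu_2$ from scratch via Herglotz. Instead it restricts $T_j$ to the cyclic subspace $\mathcal H_j=\bigvee\{T_j^kf_0:k\ge 0\}$, observes that this restriction is a cyclic analytic $2$-isometry in one variable, and invokes Richter's theorem \cite[Theorem~5.1]{R1991} as a black box to obtain $\mu_j$ together with a unitary $V_j:\mathcal H_j\to\mathcal D(\mu_j)$ sending $f_0\mapsto 1$. The inner-product matching is then done in one stroke by the explicit formula of Lemma~\ref{lemma-inner-formula}(ii), which expresses $\langle T_1^mT_2^nf_0,T_1^pT_2^qf_0\rangle$ in terms of the one-variable quantities $\langle T_j^af_0,T_j^bf_0\rangle$, and these are already known to equal $\langle z_j^a,z_j^b\rangle_{\mathcal D(\mu_1,\mu_2)}$ via $V_j$ and \eqref{inner-p-formula}. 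Your route---Herglotz for the Toeplitz matrix $[\langle B_1T_1^jf_0,T_1^kf_0\rangle]$ and an induction on $|\alpha|+|\beta|$---is more self-contained (it reproves the relevant portion of Richter's theorem) but correspondingly longer; watch the sign convention in $\hat\mu_1(n)=c_n$, since with the paper's convention $\hat\mu(j)=\int\zeta^{-j}d\mu$ one actually gets $\hat\mu_1(n)=\overline{c_n}$, and this is what is needed to match \eqref{inner-p-formula}.

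For (ii)$\Rightarrow$(i), the paper appeals to Oka--Grauert and \cite[Theorem~4.5]{ES2014} to realise $T$ concretely as multiplication on a reproducing kernel Hilbert space over $\mathbb D^2_r$, from which analyticity is immediate. Your eigenvector argument is more direct and avoids the full model, but it still tacitly uses that the rank-one bundle $\omega\mapsto\ker(T^*-\omega)$ admits a global holomorphic frame over $\mathbb D^2_r$; this is automatic for line bundles over a contractible Stein domain, so the input is lighter than full Oka--Grauert but not entirely free. Both approaches are valid; the paper's is more modular, yours more hands-on.
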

\begin{remark} \label{model-rmk}
By \cite[Theorem~1]{R1988}, any analytic $2$-isometry $T$ on $\mathcal H$ has the wandering subspace property. This result fails even for analytic toral isometric $d$-tuples if $d > 1.$ Indeed, if $a \in \mathbb D^2 \backslash \{(0, 0)\},$ then 
the restriction of $\mathscr M_z$ to $\{f \in H^2(\mathbb D^2) : f(a)=0\}$ is a toral isometry without the wandering subspace property. 
This may be seen by imitating the argument of \cite[Example~6.8]{BEKS2017} with the only change that the application of \cite[Theorem~4.3]{GRS2005} is replaced by that of \cite[Corollary~4.6]{GRS2005}. This example also shows that the assumption that the cyclic vector $f_0$ belongs to $\ker T^*$ in (i) can not be dropped from Theorem~\ref{model-thm}. Also, by Theorem~\ref{thm1}(ii), the cyclicity of $T$ in (ii) of Theorem~\ref{model-thm} can not be relaxed.  
\end{remark}

Theorems~\ref{thm1}, \ref{Gleason-new} and \ref{model-thm} provide bidisc analogs of \cite[Theorems~3.6, 3.7 and 5.1]{R1991}, respectively. 
Also, Theorem~\ref{D-mu-estimate} presents a counterpart of the fact that Dirichlet-type spaces on the unit disc have the division property (see \cite[Corollary~3.8]{R1991} and \cite[Lemma~2.1]{R1987}). 
The proofs of these results and their consequences are presented in Sections~\ref{S2}-\ref{S4} (see Corollaries~\ref{cyclic-coro}, \ref{T-spectrum}, \ref{wandering-s}, \ref{coro-e-n}, \ref{exact-middle}, \ref{Cowen-Douglas}, \ref{description-kernel}, \ref{model-coro}, \ref{toral-iso-coro}). In the final short section, we discuss the spectral picture of the multiplication $2$-tuple $\mathscr M_z$ on $\mathcal D(\mu_1, \mu_2)$ and raise some related questions. 

\section{Proof of Theorem~\ref{thm1} and its consequences} \label{S2}

We need several lemmas to prove Theorem~\ref{thm1}.  
\begin{lemma} \label{r-kernel}
The Dirichlet-type space $\mathcal D(\mu_1, \mu_2)$ is a reproducing kernel Hilbert space. If $\kappa : \mathbb D \times \mathbb D \rar \mathbb C$ is the reproducing kernel of $\mathcal D(\mu_1, \mu_2),$ then for any $r \in (0, 1),$ 
$\bigvee \{\kappa(\cdot, w) : |w| < r\}=\mathcal D(\mu_1, \mu_2)$ and 
$\kappa(\cdot, 0)=1.$ 
\end{lemma}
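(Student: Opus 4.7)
The statement bundles three assertions: (i) $\mathcal D(\mu_1,\mu_2)$ is a Hilbert space with bounded point evaluations, (ii) the kernel sections at points of any bidisc of radius $r<1$ span the whole space, and (iii) the section at the origin is the constant function $1$. My plan is to verify these in turn, leveraging Lemma~\ref{Rmk-Hardy-inclusion} to reduce to estimates in $H^2(\mathbb D^2)$ whenever convenient.

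For the Hilbert-space structure, I would first observe that the quadratic form defining $\|\cdot\|^2_{\mathcal D(\mu_1,\mu_2)}$ polarizes to an inner product. Combined with Lemma~\ref{Rmk-Hardy-inclusion}, the inequality $\|f\|_{H^2(\mathbb D^2)}\Le \|f\|_{\mathcal D(\mu_1,\mu_2)}$ implies that any Cauchy sequence $\{f_n\}$ in $\mathcal D(\mu_1,\mu_2)$ is Cauchy in $H^2(\mathbb D^2)$, and therefore converges to some $f\in H^2(\mathbb D^2)$ uniformly on compact subsets of $\mathbb D^2$, along with all partial derivatives. Lemma~\ref{increasing-lem} lets me rewrite each Dirichlet piece as a monotone limit in $r$; applying Fatou's lemma in the two variables $(z_1,\theta)$ and $(z_2,\theta)$ to the differences $\partial_i(f_n-f_m)$ shows that $f\in\mathcal D(\mu_1,\mu_2)$ and $f_n\to f$ in $\mathcal D(\mu_1,\mu_2)$. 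Bounded point evaluations at $w\in\mathbb D^2$ then follow immediately from the fact that they are bounded on $H^2(\mathbb D^2)$.

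For the identification $\kappa(\cdot,0)=1$, the key computation is that $\|1\|^2_{\mathcal D(\mu_1,\mu_2)}=1$, since both Dirichlet integrals of the constant function vanish. For an arbitrary $f\in\mathcal D(\mu_1,\mu_2)$, the inner product $\langle f,1\rangle_{\mathcal D(\mu_1,\mu_2)}$ splits into an $H^2$ term and two Dirichlet-type cross terms; the latter each contain a factor $\partial_i 1=0$ and hence vanish, so $\langle f,1\rangle_{\mathcal D(\mu_1,\mu_2)}=\langle f,1\rangle_{H^2(\mathbb D^2)}=\hat f(0,0)=f(0)$. This gives both the reproducing identity and $\kappa(\cdot,0)=1$.

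The density claim is then a short identity-principle argument: if $g\in\mathcal D(\mu_1,\mu_2)$ is orthogonal to every $\kappa(\cdot,w)$ with $|w|<r$, then $g(w)=\langle g,\kappa(\cdot,w)\rangle=0$ on a non-empty open subset of $\mathbb D^2$, forcing $g\equiv 0$ by the identity theorem for holomorphic functions on $\mathbb D^2$. I expect the main obstacle to be the completeness step, where one must carefully interchange the supremum in $r$ (or equivalently the monotone limit from Lemma~\ref{increasing-lem}) with the limit in $n$; once Fatou's lemma is applied correctly on the weighted product measure $P_{\mu_i}(z_i)\,dA(z_i)\,d\theta$, the remaining pieces are routine.
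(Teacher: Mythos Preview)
Your proposal is correct and follows essentially the same route as the paper: completeness via Fatou's lemma on the weighted measures $P_{\mu_i}(z_i)\,dA(z_i)\,d\theta$ after first passing to an $H^2(\mathbb D^2)$ limit with compact convergence of partial derivatives, bounded point evaluations inherited from $H^2(\mathbb D^2)$, the computation $\langle f,1\rangle_{\mathcal D(\mu_1,\mu_2)}=\langle f,1\rangle_{H^2(\mathbb D^2)}=f(0)$ for $\kappa(\cdot,0)=1$, and the identity theorem for the density of kernel sections. The only cosmetic difference is that the paper makes the two-step Fatou argument (first $f\in\mathcal D(\mu_1,\mu_2)$, then $D_{\mu_1,\mu_2}(f_n-f)\Le\liminf_m D_{\mu_1,\mu_2}(f_n-f_m)$) explicit, whereas you bundle both into one sentence.
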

\begin{proof}
We borrow an argument from the proof of \cite[Theorem~1.6.3]{EKMR2014}. 
Let $\{f_n\}_{n \Ge 0}$ be a Cauchy sequence in $\mathcal D(\mu_1, \mu_2).$ Since $H^2(\mathbb D^2)$ is complete (see \cite[p~53]{Ru1969}), there exists a $f \in H^2(\mathbb D^2)$ such that $\|f_n-f\|^2_{H^2(\mathbb D^2)} \rar 0$ as $n \rar \infty.$ Moreover, since $H^2(\mathbb D^2)$ is a reproducing kernel Hilbert space, for every $j=1, 2,$ $\partial_j f_n$ converges compactly to $\partial_j f$ on $\mathbb D^2.$ 
Also, since $\{f_n\}_{n \Ge 0}$ is bounded in $\mathcal D(\mu_1, \mu_2)$, by Lemma~\ref{increasing-lem}, 
there exists an $M > 0$ such that for every integer $n \Ge 0$ and $r \in (0, 1),$ 
\beqn
&&  \int_{\mathbb T} \int_{\mathbb D}|\partial_1 f_n(z_1, re^{i \theta})|^2 P_{\mu_1}(z_1) \,dA(z_1)d\theta < M, \\
&& \int_{\mathbb T} \int_{\mathbb D}|\partial_2 f_n(re^{i \theta}, z_2)|^2 P_{\mu_2}(z_2) \,dA(z_2)d\theta < M.
\eeqn
By Fatou's lemma (see \cite[Lemma~1.28]{Ru1987}), for any $r \in (0, 1),$ 
\allowdisplaybreaks
\beq \label{fn-fm}
&& \int_{\mathbb T} \int_{\mathbb D}|\partial_1 f(z_1, re^{i \theta})|^2 P_{\mu_1}(z_1) \,dA(z_1)d\theta \\
&\Le & \liminf_n \int_{\mathbb T} \int_{\mathbb D}|\partial_1 f_n(z_1, re^{i \theta})|^2 P_{\mu_1}(z_1) \,dA(z_1)d\theta \Le M. \notag
\eeq
Similarly, one can see that 
\beqn
\int_{\mathbb T} \int_{\mathbb D}|\partial_2 f(re^{i \theta}, z_2)|^2 P_{\mu_2}(z_2) \,dA(z_2)d\theta \Le M, \quad r \in (0, 1). 
\eeqn
This shows that $f \in \mathcal D(\mu_1, \mu_2).$ 
We may now argue as in \eqref{fn-fm} (with $f$ replaced by $f_n-f$ and $f_n$ replaced by $f_n-f_m$) and use Fatou's lemma to conclude that 
\beqn
&& \int_{\mathbb T} \int_{\mathbb D}|\partial_1 (f_n-f)(z_1, re^{i \theta})|^2 P_{\mu_1}(z_1) \,dA(z_1)d\theta \\
&\Le & \liminf_m \int_{\mathbb T} \int_{\mathbb D}|\partial_1 (f_n-f_m)(z_1, re^{i \theta})|^2 P_{\mu_1}(z_1) \,dA(z_1)d\theta.
\eeqn
Similarly, we obtain 
\beqn
&& \int_{\mathbb T} \int_{\mathbb D}|\partial_2 (f_n-f)(re^{i \theta}, z_2)|^2 P_{\mu_2}(z_2) \,dA(z_2)d\theta \\
&\Le & \liminf_m \int_{\mathbb T} \int_{\mathbb D}|\partial_2 (f_n-f_m)(re^{i \theta}, z_2)|^2 P_{\mu_2}(z_2) \,dA(z_2)d\theta.
\eeqn
These two estimates combined with Lemma~\ref{increasing-lem} yield
\beqn
D_{\mu_1, \mu_2}(f_n - f) \Le \liminf_{m} D_{\mu_1, \mu_2}(f_n - f_m), \quad n \Ge 0. 
\eeqn
This shows that $\{f_n\}_{n \Ge 0}$ converges to $f$ in $\mathcal D(\mu_1, \mu_2).$ Finally, since $H^2(\mathbb D^2)$ is a reproducing kernel Hilbert space, so is $\mathcal D(\mu_1, \mu_2)$ (see Lemma~\ref{Rmk-Hardy-inclusion}). 

To see the `moreover' part, note that for any $f \in \mathcal D(\mu_1, \mu_2),$ by the reproducing property of $\mathcal D(\mu_1, \mu_2),$
\beqn
\inp{f}{1}_{\mathcal D(\mu_1, \mu_2)} = \inp{f}{1}_{H^2(\mathbb D^2)} = f(0)=\inp{f}{\kappa(\cdot, 0)}_{\mathcal D(\mu_1, \mu_2)}, 
\eeqn
and hence $\kappa(\cdot, 0)=1.$ The rest follows from the reproducing property of $\mathcal D(\mu_1, \mu_2)$ together with an application of the identity theorem. 
\end{proof}

Although we do not need in this section the full strength of the following lemma (cf. \cite[Theorem 4.2]{GR2006}), we include it for later usage: 
\begin{lemma} \label{Hardy-slice}
Let $f : \mathbb D^2 \rar \mathbb C$ be a holomorphic function. 
For $r \in (0, 1)$ and $\theta \in [0, 2\pi],$ consider the holomorphic function $f_{r, \theta}(w)=f(w, re^{i \theta}),$ $w \in \mathbb D.$ 
If $f \in H^2(\mathbb D^2),$ then $f_{r, \theta} \in H^2(\mathbb D)$ for every $r \in (0, 1)$ and $\theta \in [0, 2\pi].$ Moreover,
\beq \label{Hardy-fact}
\sup_{0 < r < 1}\int_{0}^{2\pi}\|f_{r, \theta}\|^2_{H^2(\mathbb D)}d\theta =\|f\|^2_{H^2(\mathbb D^2)}, \quad f \in H^2(\mathbb D^2).
\eeq 
\end{lemma}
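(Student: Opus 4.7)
The plan is to reduce the statement to a manipulation of Taylor coefficients. First I would expand $f \in H^2(\mathbb D^2)$ as $f(z_1, z_2) = \sum_{(m, n) \in \mathbb Z^2_+} \hat f(m, n) z_1^m z_2^n$; by \eqref{Hardy-norm-new-0} the series converges normally on $\mathbb D^2$ and $\|f\|^2_{H^2(\mathbb D^2)} = \sum_{m, n} |\hat f(m, n)|^2$. For fixed $r \in (0, 1)$ and $\theta \in [0, 2\pi]$, the estimate
\[
\sum_{m, n} |\hat f(m, n)|\, |w|^m r^n \;\Le\; \frac{\|f\|_{H^2(\mathbb D^2)}}{\sqrt{(1 - |w|^2)(1 - r^2)}}, \quad |w| < 1,
\]
(obtained by Cauchy-Schwarz) legitimizes rearranging the double series and identifies the Taylor coefficients of $f_{r, \theta}$ as
\[
c_m(r, \theta) \;=\; \sum_{n = 0}^{\infty} \hat f(m, n)\, r^n e^{i n \theta}, \quad m \in \mathbb Z_+.
\]

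The main potential obstacle is establishing that $f_{r, \theta} \in H^2(\mathbb D)$ at \emph{every} $\theta$, not merely almost every one. To handle this, I would apply Cauchy-Schwarz to the series for $c_m(r, \theta)$ to obtain
\[
|c_m(r, \theta)|^2 \;\Le\; \Bigl(\sum_n |\hat f(m, n)|^2\Bigr) \Bigl(\sum_n r^{2n}\Bigr) \;=\; \frac{1}{1 - r^2} \sum_n |\hat f(m, n)|^2,
\]
and then sum in $m$ to get $\sum_m |c_m(r, \theta)|^2 \Le (1 - r^2)^{-1} \|f\|^2_{H^2(\mathbb D^2)} < \infty$. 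Hence $f_{r, \theta} \in H^2(\mathbb D)$, and by the $1$-variable analog of \eqref{Hardy-norm-new-0},
\[
\|f_{r, \theta}\|^2_{H^2(\mathbb D)} \;=\; \sum_{m = 0}^{\infty} |c_m(r, \theta)|^2.
\]

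For \eqref{Hardy-fact}, I would integrate this identity in $\theta$, interchange sum and integral by Tonelli's theorem, and apply Parseval on $\mathbb T$ to the Fourier series $\theta \mapsto c_m(r, \theta) = \sum_n (\hat f(m, n) r^n) e^{i n \theta}$, yielding $\int_0^{2\pi} |c_m(r, \theta)|^2\, d\theta = \sum_n |\hat f(m, n)|^2 r^{2n}$. Summing in $m$ gives
\[
\int_0^{2\pi} \|f_{r, \theta}\|^2_{H^2(\mathbb D)} \, d\theta \;=\; \sum_{(m, n) \in \mathbb Z^2_+} |\hat f(m, n)|^2 r^{2n},
\]
a quantity that is manifestly increasing in $r$. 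Taking the supremum and invoking the monotone convergence theorem produces $\sum_{m, n} |\hat f(m, n)|^2 = \|f\|^2_{H^2(\mathbb D^2)}$, which completes the verification of \eqref{Hardy-fact}.
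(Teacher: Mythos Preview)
Your proof is correct and follows essentially the same approach as the paper: identify the Taylor coefficients of $f_{r,\theta}$ as $\sum_n \hat f(m,n) r^n e^{in\theta}$, use Cauchy--Schwarz to get membership in $H^2(\mathbb D)$ for every $\theta$, and then integrate in $\theta$ using orthogonality of the exponentials to recover $\sum_{m,n}|\hat f(m,n)|^2 r^{2n}$. You spell out the justifications (absolute convergence for rearrangement, Tonelli, Parseval, monotone convergence for the supremum) that the paper leaves implicit, but the argument is the same.
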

\begin{proof} The proof relies on the formula \eqref{Hardy-norm-new-0}. 
First, note that 
\beq \label{relation-norm}
\sum_{m =0}^{\infty} |\hat{f}_{r, \theta}(m)|^2 = \sum_{m=0}^{\infty} \Big|\sum_{n=0}^{\infty} \hat{f}(m, n) r^n e^{in \theta}\Big|^2.  
\eeq
If $f \in H^2(\mathbb D^2),$ then applying the Cauchy-Schwarz inequality to \eqref{relation-norm} gives that $f_{r, \theta} \in H^2(\mathbb D)$ for every $r \in (0, 1)$ and $\theta \in [0, 2\pi].$ Moreover, integrating both sides of \eqref{relation-norm} with respect to $\theta$ over $[0, 2\pi]$ yields \eqref{Hardy-fact}. 
\end{proof}
\begin{remark}
We note that 
\beq
\notag
&&\mbox{if $f \in \mathcal D(\mu_1, \mu_2),$ then for every $r \in (0, 1),$ 
$f(\cdot, re^{i\theta}) \in \mathcal D(\mu_1)$} \\  \label{rmk-slice-theta}
&&\mbox{and $f(re^{i\theta}, \cdot) \in \mathcal D(\mu_2)$ for almost every $\theta \in [0, 2\pi].$} 
\eeq
To see this, note that for any holomorphic function $f : \mathbb D^2 \rar \mathbb C,$ 
\beq \label{formula-D}
D_{\mu_1, \mu_2}(f) = \lim_{r \rar 1^{-}} \int_{\mathbb T} D_{\mu_1}(f(\cdot, re^{i\theta})) d\theta 
+ \lim_{r \rar 1^{-}} \int_{\mathbb T} D_{\mu_2}(f(re^{i\theta}, \cdot))d\theta,
\eeq
and hence, if $f \in \mathcal D(\mu_1, \mu_2),$ then 
by Lemma~\ref{increasing-lem}, $\int_{\mathbb T} D_{\mu_1}(f(\cdot, re^{i\theta})) d\theta$ and $\int_{\mathbb T} D_{\mu_2}(f(re^{i\theta}, \cdot))d\theta$ are finite for every $r \in (0, 1).$ One may now apply Lemma~\ref{Hardy-slice} to complete the verification of \eqref{rmk-slice-theta}.  
\end{remark}
It turns out that the operator $\mathscr M_{z_j}$ of multiplication by the coordinate functions $z_j,$ $j=1, 2,$ defines a bounded linear operator on $\mathcal D(\mu_1, \mu_2).$
\begin{lemma} \label{multiplier}
The coordinate functions $z_1, z_2$ are multipliers of $\mathcal D(\mu_1, \mu_2).$
\end{lemma}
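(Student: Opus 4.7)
The plan is to show that for any $f \in \mathcal D(\mu_1, \mu_2)$, the product $z_1 f$ lies in $\mathcal D(\mu_1, \mu_2)$ with a norm bound independent of $f$; the argument for $z_2 f$ is symmetric. Since $\mathscr M_{z_1}$ is isometric on $H^2(\mathbb D^2)$, the Hardy part is immediately controlled by $\|z_1 f\|_{H^2(\mathbb D^2)} = \|f\|_{H^2(\mathbb D^2)}$, so the task reduces to estimating the Dirichlet integral $D_{\mu_1, \mu_2}(z_1 f)$. For this I would invoke the slice representation \eqref{formula-D}, reducing $D_{\mu_1, \mu_2}(z_1 f)$ to an integrated sum of one-variable Dirichlet integrals along the two coordinate slices.

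The $\mu_2$-slice is the easy piece: since $(z_1 f)(re^{i\theta}, \cdot) = re^{i\theta}\, f(re^{i\theta}, \cdot)$, one has $D_{\mu_2}((z_1 f)(re^{i\theta}, \cdot)) = r^2\, D_{\mu_2}(f(re^{i\theta}, \cdot))$, whose $\theta$-integral tends, as $r \to 1^-$, to a quantity dominated by the $\mu_2$-part of $D_{\mu_1, \mu_2}(f)$. The $\mu_1$-slice is where the actual work occurs: writing $f_{r,\theta}(w) = f(w, re^{i\theta})$, the slice of $z_1 f$ at $z_2 = re^{i\theta}$ is $w \mapsto w f_{r,\theta}(w)$. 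By \eqref{rmk-slice-theta}, $f_{r,\theta} \in \mathcal D(\mu_1)$ for a.e.\ $\theta$, and the one-variable theory of Richter \cite{R1991} provides a constant $C_{\mu_1}$ with
\[
D_{\mu_1}(w f_{r,\theta}) \le C_{\mu_1}\big(D_{\mu_1}(f_{r,\theta}) + \|f_{r,\theta}\|^2_{H^2(\mathbb D)}\big).
\]
Integrating in $\theta$, Lemma~\ref{Hardy-slice} controls $\int_{\mathbb T}\|f_{r,\theta}\|^2_{H^2(\mathbb D)} d\theta$ by $\|f\|^2_{H^2(\mathbb D^2)}$, while \eqref{formula-D} together with Lemma~\ref{increasing-lem} controls $\int_{\mathbb T} D_{\mu_1}(f_{r,\theta}) d\theta$ by the $\mu_1$-part of $D_{\mu_1, \mu_2}(f)$, uniformly in $r$. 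Passing to the limit $r \to 1^-$ (the left-hand integrand is monotone in $r$ by Lemma~\ref{increasing-lem}) and combining the two slice estimates yields $\|z_1 f\|^2_{\mathcal D(\mu_1, \mu_2)} \le C\,\|f\|^2_{\mathcal D(\mu_1, \mu_2)}$ for a constant $C$ depending only on $\mu_1, \mu_2$.

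The main obstacle is obtaining the quantitative one-variable bound $\|\mathscr M_w g\|^2_{\mathcal D(\mu_1)} \le C_{\mu_1}\|g\|^2_{\mathcal D(\mu_1)}$ in a form that applies uniformly to the slices $g = f_{r,\theta}$; this is available from Richter's proof that $\mathscr M_w$ is a bounded $2$-isometry on $\mathcal D(\mu_1)$, which in particular gives the identity $D_{\mu_1}(wg) = D_{\mu_1}(g) + \int_{\mathbb T}|g^*|^2 d\mu_1$ with a continuous embedding $\mathcal D(\mu_1) \hookrightarrow L^2(\mu_1)$. The interchange of $\lim_{r \to 1^-}$ with the $\theta$-integral is handled by the monotonicity in Lemma~\ref{increasing-lem}.
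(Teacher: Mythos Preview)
Your proposal is correct and follows essentially the same route as the paper: both arguments reduce $D_{\mu_1,\mu_2}(z_1 f)$ to the slice decomposition \eqref{formula-D}, dispose of the $\mu_2$-slice via the trivial estimate $D_{\mu_2}((z_1 f)(re^{i\theta},\cdot)) = r^2 D_{\mu_2}(f(re^{i\theta},\cdot))$, and control the $\mu_1$-slice by invoking Richter's one-variable boundedness of $\mathscr M_w$ on $\mathcal D(\mu_1)$ together with Lemma~\ref{Hardy-slice} for the Hardy contribution. The only cosmetic difference is that the paper cites the operator-norm inequality $\|\mathscr M_w g\|_{\mathcal D(\mu_1)} \le \|\mathscr M_w\|\,\|g\|_{\mathcal D(\mu_1)}$ directly, whereas you unpack it as $D_{\mu_1}(wg) \le C_{\mu_1}(D_{\mu_1}(g)+\|g\|^2_{H^2(\mathbb D)})$; these are equivalent formulations of the same input.
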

\begin{proof} 
By \eqref{rmk-slice-theta}, for any $f \in \mathcal D(\mu_1, \mu_2)$ and $r \in (0, 1),$ $f(\cdot, re^{i \theta}) \in \mathcal D(\mu_1)$ for a.e.\,$\theta \in[0,2\pi].$
By \cite[Theorem~3.6]{R1991},  the operator $\mathscr M_w$ of multiplication by the coordinate function $w$ on $\mathcal D(\mu_1)$ is bounded and satisfies 
\beq \label{bdd-mu1}
\|\mathscr M_w f(\cdot, re^{i \theta})\|_{\mathcal D(\mu_1)} \Le \|\mathscr M_w\| \|f(\cdot, re^{i \theta})\|_{\mathcal D(\mu_1)}~\mbox{for~a.e.}\,\theta \in[0,2\pi].
\eeq
Since $\mathscr M^*_w\mathscr M_w \Ge I,$ $\|\mathscr M_w\| \Ge 1.$
Fix now $f \in \mathcal D(\mu_1, \mu_2).$ By Lemma~\ref{Rmk-Hardy-inclusion}, $f \in H^2(\mathbb D^2),$ and hence $z_1f \in H^2(\mathbb D^2).$  
By \eqref{formula-D} (two applications), 
\beqn
&& D_{\mu_1, \mu_2}(z_1f) \\
&=& 
 \lim_{r \rar 1^{-}} \int_{\mathbb T} D_{\mu_1}((z_1f)(\cdot, re^{i\theta}))d\theta 
+ \lim_{r \rar 1^{-}} \int_{\mathbb T} D_{\mu_2}((z_1f)(re^{i\theta}, \cdot))d\theta \\
&\Le &  \lim_{r \rar 1^{-}} \int_{\mathbb T}\|\mathscr M_w f(\cdot, re^{i \theta})\|^2_{\mathcal D(\mu_1)} d\theta 
+ \lim_{r \rar 1^{-}} \int_{\mathbb T} D_{\mu_2}(f(re^{i\theta}, \cdot))d\theta \\
& \overset{\eqref{bdd-mu1}}\Le & \|\mathscr M_w\|^2 \lim_{r \rar 1^{-}} \int_{\mathbb T}\|f(\cdot, re^{i \theta})\|^2_{\mathcal D(\mu_1)} d\theta 
+ \lim_{r \rar 1^{-}} \int_{\mathbb T} D_{\mu_2}(f(re^{i\theta}, \cdot))d\theta \\
&\Le & \|\mathscr M_w\|^2 \lim_{r \rar 1^{-}} \int_{\mathbb T}\|f(\cdot, re^{i \theta})\|^2_{H^2(\mathbb D)} d\theta + \|\mathscr M_w\|^2  D_{\mu_1, \mu_2}(f)\\
& \overset{\eqref{Hardy-fact}}= & \|\mathscr M_w\|^2 \|f\|^2_{\mathcal D(\mu_1, \mu_2)}.
\eeqn
Similarly, one can see that for some $c_2 \Ge 1,$  
\beqn
D_{\mu_1, \mu_2}(z_2f) \Le c_2 \|f\|^2_{\mathcal D(\mu_1, \mu_2)}, \quad f \in  \mathcal D(\mu_1, \mu_2).
\eeqn
This completes the proof. 
\end{proof}

The following is a bidisc-analog of Richter's formula (see \cite[Proof of Theorem 4.1]{R1991}, \cite[Theorem 1.3]{CGR2020}). 
\begin{lemma} \label{Richter-formula}
For nonnegative integers $k, l$ and polynomial $p$ in the complex variables $z_1$ and $z_2,$ we have the formula \eqref{formula-Richter}.
\end{lemma}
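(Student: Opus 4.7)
The plan is to derive \eqref{formula-Richter} by combining the slicing formula \eqref{formula-D} with Richter's one-variable identity (cf.\ \cite[Proof of Theorem~4.1]{R1991}), which asserts that for any $\mu \in M_+(\mathbb T)$, polynomial $q$ in one complex variable, and $n \in \mathbb Z_+$,
\begin{equation*}
D_\mu(z^n q) \,=\, D_\mu(q) + n \int_{\mathbb T}|q(e^{i\eta})|^2 d\mu(\eta).
\end{equation*}
Since multiplication by the coordinate functions acts isometrically on $H^2(\mathbb D^2)$, one has $\|z_1^k z_2^l p\|^2_{H^2(\mathbb D^2)} = \|p\|^2_{H^2(\mathbb D^2)}$; hence it suffices to establish the analogous identity for $D_{\mu_1, \mu_2}$ alone.

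Apply \eqref{formula-D} to $f = z_1^k z_2^l p$. For the first summand, observe that $f(\cdot, re^{i\theta}) = r^l e^{il\theta}\, z_1^k\, p(\cdot, re^{i\theta})$, whence
\begin{equation*}
D_{\mu_1}\bigl(f(\cdot, re^{i\theta})\bigr) \,=\, r^{2l}\, D_{\mu_1}\bigl(z_1^k\, p(\cdot, re^{i\theta})\bigr).
\end{equation*}
Since $p(\cdot, re^{i\theta})$ is a polynomial in the single variable $z_1$, the one-variable Richter identity turns this into
\begin{equation*}
r^{2l}\, D_{\mu_1}\bigl(p(\cdot, re^{i\theta})\bigr) \,+\, r^{2l}\, k \int_{\mathbb T} |p(e^{i\eta}, re^{i\theta})|^2 d\mu_1(\eta).
\end{equation*}
Now I would integrate in $\theta$ and let $r \to 1^-$: the scalar factor $r^{2l}$ tends to $1$; the first piece converges to $\lim_{r \to 1^-} \int_{\mathbb T} D_{\mu_1}(p(\cdot, re^{i\theta}))\, d\theta$, which is precisely the first summand of \eqref{formula-D} applied to $p$; and the boundary integral converges to $k \int_{\mathbb T^2} |p(e^{i\eta}, e^{i\theta})|^2 d\mu_1(\eta) d\theta$ by bounded convergence, using that $|p|^2$ is continuous and bounded on $\overline{\mathbb D^2}$.

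The symmetric manipulation on the second summand of \eqref{formula-D} (now slicing in the first variable, exploiting $f(re^{i\theta}, \cdot) = r^k e^{ik\theta}\, z_2^l\, p(re^{i\theta}, \cdot)$) yields the corresponding $\mu_2$ boundary contribution. Summing both and invoking \eqref{formula-D} once more to identify $D_{\mu_1, \mu_2}(p)$ delivers \eqref{formula-Richter}. I anticipate no serious obstacle: the argument is essentially a clean slice-by-slice reduction to the one-variable Richter identity, with the only technicality being the justification that the $r \to 1^-$ limits exist and interchange with $\theta$-integration; this is secured by the monotonicity in Lemma~\ref{increasing-lem} together with the polynomial (hence uniformly bounded) nature of $p$ on $\overline{\mathbb D^2}$.
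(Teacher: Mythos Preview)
Your proposal is correct and follows essentially the same approach as the paper: both apply the slicing formula \eqref{formula-D}, reduce to Richter's one-variable identity $D_\mu(z^n q) = D_\mu(q) + n\int_{\mathbb T}|q|^2\,d\mu$, and then pass to the limit $r\to 1^-$ using Lemma~\ref{increasing-lem}. If anything, you are slightly more careful in tracking the scalar factor $r^{2l}$ (which the paper silently absorbs into the limit) and in invoking bounded rather than monotone convergence for the boundary term---both minor cosmetic differences.
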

\begin{proof}
By \eqref{rmk-slice-theta} (see also \eqref{formula-D}) and \cite[Proof of Theorem 4.1]{R1991}, 
\allowdisplaybreaks
\beqn
&& \|z^k_1z^l_2p\|^2_{\mathcal D(\mu_1, \mu_2)} \\
&= & \|z^k_1z^l_2 p\|^2_{H^2(\mathbb D^2)} +\lim_{r \rar 1^{-}} \int_{\mathbb T} 
  D_{\mu_1}( w^k p(w, re^{i \theta}))d\theta 
\\ &+& \lim_{r \rar 1^{-}} \int_{\mathbb T} D_{\mu_2}( w^l p(re^{i \theta}, w))d\theta \\
&=& \|p\|^2_{H^2(\mathbb D^2)} + \lim_{r \rar 1^{-}} \int_{\mathbb T} 
  \Big(D_{\mu_1}(p(w, re^{i \theta}))+ k \int_{\mathbb T} |p(e^{i \eta}, r e^{i \theta})|^2 d\mu_1(\eta)\Big)d\theta 
\\ &+& \lim_{r \rar 1^{-}} \int_{\mathbb T} \Big(D_{\mu_2}(p(re^{i \theta}, w))+ l \int_{\mathbb T} |p(r e^{i \theta}, e^{i \eta})|^2 d\mu_2(\eta)\Big)d\theta \\
&=& \|p\|^2_{\mathcal D(\mu_1, \mu_2)} + k \int_{\mathbb T^2} |p(e^{i \eta},  e^{i \theta})|^2 d\mu_1(\eta)d\theta + l \int_{\mathbb T^2} |p(e^{i \theta}, e^{i \eta})|^2 d\mu_2(\eta)d\theta, 
\eeqn
where we used Lemma~\ref{increasing-lem} and the monotone convergence theorem. 
\end{proof}

For $R=(R_1, R_2) \in (0, 1)^2$ and $f \in \mathcal O(\mathbb D^2),$ let $f_R(z)=f(R_1z_1, R_2z_2).$ 
To get the polynomial density in $\mathcal D(\mu_1, \mu_2),$ we need the following inequality. 
\begin{lemma} \label{contractivity-D}
For any $R=(R_1, R_2) \in (0, 1)^2$ and $f \in \mathcal D(\mu_1, \mu_2),$  
$$D_{\mu_1, \mu_2}(f_R) \Le D_{\mu_1, \mu_2}(f).$$
\end{lemma}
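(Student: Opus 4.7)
The plan is to estimate the two defining summands of $D_{\mu_1, \mu_2}(f_R)$ separately, bounding each by the corresponding summand of $D_{\mu_1, \mu_2}(f)$. By the manifest symmetry of the definition under the interchange of the two indices, it suffices to treat the first summand. The key identity is $\partial_1 f_R(z_1, z_2) = R_1\, (\partial_1 f)(R_1 z_1, R_2 z_2)$: for each fixed $r \in (0, 1)$ and $\theta \in [0, 2\pi]$ I would introduce the one-variable slice
\[
h_{r, \theta}(w) := f(w, R_2 r e^{i\theta}), \quad w \in \mathbb D,
\]
so that $f_R(z_1, r e^{i\theta}) = h_{r, \theta}(R_1 z_1)$ and hence
\[
\int_{\mathbb D} |\partial_1 f_R(z_1, re^{i\theta})|^2 P_{\mu_1}(z_1)\, dA(z_1) \;=\; D_{\mu_1}\bigl(h_{r, \theta}(R_1 \cdot)\bigr).
\]

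Next I would apply two tools in sequence. First, by \eqref{rmk-slice-theta} the slice $h_{r, \theta}$ lies in $\mathcal D(\mu_1)$ for almost every $\theta$, so Richter's one-variable contractivity of dilation on $\mathcal D(\mu_1)$ (from \cite{R1991}) delivers $D_{\mu_1}(h_{r, \theta}(R_1 \cdot)) \le D_{\mu_1}(h_{r, \theta})$. Integrating in $\theta$ and unwinding the definition of $h_{r, \theta}$,
\[
\int_{\mathbb T}\!\int_{\mathbb D} |\partial_1 f_R(z_1, re^{i\theta})|^2 P_{\mu_1}(z_1)\, dA(z_1)\, d\theta \;\le\; \int_{\mathbb T}\!\int_{\mathbb D} |\partial_1 f(z_1, R_2 r e^{i\theta})|^2 P_{\mu_1}(z_1)\, dA(z_1)\, d\theta.
\]
Second, Lemma~\ref{increasing-lem} applied to $\partial_1 f \in \mathcal O(\mathbb D^2)$ with $d = 1$ and the finite positive Borel measure $P_{\mu_1}(z_1)\, dA(z_1)$ on $\mathbb D$ (which is finite since the Poisson kernel of $\mu_1$ is area-integrable) says that $s \mapsto \int_{\mathbb T}\!\int_{\mathbb D} |\partial_1 f(z_1, s e^{i\theta})|^2 P_{\mu_1}(z_1)\, dA(z_1)\, d\theta$ is increasing on $(0, 1)$. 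Since $R_2 r \le r$, the right-hand side above is bounded by the same integral at $s = r$; taking the supremum in $r \in (0, 1)$ yields the required estimate for the first summand. Repeating the same argument with the indices $1$ and $2$ interchanged controls the second summand, and adding the two inequalities gives $D_{\mu_1, \mu_2}(f_R) \le D_{\mu_1, \mu_2}(f)$.

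The main obstacle is securing Richter's one-variable contractivity $D_{\mu_1}(g(R_1 \cdot)) \le D_{\mu_1}(g)$ on $\mathcal D(\mu_1)$ invoked above. This inequality is not a term-by-term Fourier-coefficient estimate (individual ``tail'' contributions in the series decomposition of $D_{\mu_1}(g)$ can violate it), so it should either be cited explicitly from \cite{R1991} or be extracted as a separate one-variable sub-lemma, for instance by combining Richter's identity for $\|zp\|^2_{\mathcal D(\mu_1)}$ on polynomials with the one-variable polynomial density result.
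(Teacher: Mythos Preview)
Your argument is correct and matches the paper's proof almost exactly: both reduce to the one-variable dilation contractivity $D_{\mu}(g(R\cdot)) \le D_{\mu}(g)$ on slices and then invoke Lemma~\ref{increasing-lem} to absorb the remaining radial factor. The paper sources that one-variable inequality from \cite[Proposition~3]{S1997} rather than \cite{R1991}, which resolves precisely the obstacle you flagged.
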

\begin{proof} 
By \eqref{formula-D} and \cite[Proposition~3]{S1997}, 
\beqn
D_{\mu_1, \mu_2}(f_R) =  \lim_{r \rar 1^{-}} \int_{\mathbb T} D_{\mu_1}(f_R(\cdot, re^{i\theta})) d\theta 
+ \lim_{r \rar 1^{-}} \int_{\mathbb T} D_{\mu_2}(f_R(re^{i\theta}, \cdot))d\theta \\
\Le \lim_{r \rar 1^{-}} \int_{\mathbb T} D_{\mu_1}(f(\cdot, rR_2e^{i\theta})) d\theta 
+ \lim_{r \rar 1^{-}} \int_{\mathbb T} D_{\mu_2}(f(rR_1e^{i\theta}, \cdot))d\theta. 
\eeqn 
This, combined with Lemma~\ref{increasing-lem}, yields
\beqn
D_{\mu_1, \mu_2}(f_R) 
\Le \lim_{r \rar 1^{-}} \int_{\mathbb T} D_{\mu_1}(f(\cdot, re^{i\theta})) d\theta 
+ \lim_{r \rar 1^{-}} \int_{\mathbb T} D_{\mu_2}(f(re^{i\theta}, \cdot))d\theta.
\eeqn 
An application of \eqref{formula-D} now completes the proof. 
\end{proof}

Here is a key step in deducing the density of polynomials in $\mathcal D(\mu_1, \mu_2).$ 
\begin{lemma} \label{poly-density-lem}
For any $f \in \mathcal D(\mu_1, \mu_2),$ 
$$\lim_{R_1, R_2 \rar 1^{-}}D_{\mu_1, \mu_2}(f-f_{R}) =0.$$
\end{lemma}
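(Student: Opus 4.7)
The plan is to reduce the bidisc claim to the one-variable density $D_\mu(g - g_R) \to 0$ of Sarason \cite[Proposition~3]{S1997} via a Taylor-series decomposition that diagonalizes $D_{\mu_1, \mu_2}$. Writing $f(z_1,z_2) = \sum_{m, n \geq 0} a_{m,n}\, z_1^m z_2^n$, I introduce the column and row sections
\[
U_n(z_1) := \sum_{m \geq 0} a_{m,n}\, z_1^m, \qquad V_m(z_2) := \sum_{n \geq 0} a_{m,n}\, z_2^n.
\]
Since $\{e^{in\theta}\}_{n \in \mathbb Z}$ is orthonormal with respect to the normalized $d\theta$, Parseval applied to $\partial_1 f(z_1, re^{i\theta}) = \sum_n r^n U_n'(z_1)\, e^{in\theta}$, combined with the one-variable identity $D_{\mu_1}(h) = \int_{\mathbb D} |h'|^2 P_{\mu_1}\, dA$ and Tonelli, gives
\[
\int_{\mathbb T} \int_{\mathbb D} |\partial_1 f(z_1, re^{i\theta})|^2 P_{\mu_1}(z_1)\, dA(z_1)\, d\theta = \sum_{n \geq 0} r^{2n}\, D_{\mu_1}(U_n).
\]
Passing $r \to 1^-$ via monotone convergence (using Lemma~\ref{increasing-lem}) and symmetrizing in the $\partial_2$-term yields the diagonal formula
\[
D_{\mu_1, \mu_2}(f) = \sum_{n \geq 0} D_{\mu_1}(U_n) + \sum_{m \geq 0} D_{\mu_2}(V_m),
\]
so that both series converge and each $U_n \in \mathcal D(\mu_1)$, $V_m \in \mathcal D(\mu_2)$.

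Applied to $f - f_R$, whose $n$-th column is $U_n - R_2^n (U_n)_{R_1}$ and whose $m$-th row is $V_m - R_1^m (V_m)_{R_2}$ (since the Taylor coefficients of $f_R$ are $a_{m,n} R_1^m R_2^n$), the diagonal formula becomes
\[
D_{\mu_1, \mu_2}(f - f_R) = \sum_n D_{\mu_1}\bigl(U_n - R_2^n (U_n)_{R_1}\bigr) + \sum_m D_{\mu_2}\bigl(V_m - R_1^m (V_m)_{R_2}\bigr).
\]
Decomposing $U_n - R_2^n (U_n)_{R_1} = (1 - R_2^n) U_n + R_2^n \bigl(U_n - (U_n)_{R_1}\bigr)$ and using the triangle inequality for the seminorm $D_{\mu_1}^{1/2}$ together with Sarason's one-variable density $D_{\mu_1}(U_n - (U_n)_{R_1}) \to 0$, I see that each summand of the first series tends to $0$ as $R \to (1,1)^-$. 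The one-variable contractivity $D_{\mu_1}((U_n)_{R_1}) \leq D_{\mu_1}(U_n)$ gives the uniform bound $D_{\mu_1}\bigl(U_n - R_2^n (U_n)_{R_1}\bigr) \leq 4\, D_{\mu_1}(U_n)$; since $\sum_n D_{\mu_1}(U_n) < \infty$, the dominated convergence theorem on $\mathbb Z_+$ yields the vanishing of the first sum. The symmetric argument handles the $V_m$ sum, completing the proof.

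The main technical step is the justification of the diagonal formula: one must invoke Tonelli to interchange the $n$-summation with the $\int_{\mathbb D} \cdots P_{\mu_1}\, dA$ integration, and Lemma~\ref{increasing-lem} together with monotone convergence to identify $\sup_r$ with $\lim_{r \to 1^-}$ of the partial sums $\sum_n r^{2n} D_{\mu_1}(U_n)$. Once the diagonal formula is in place, the convergence becomes a routine dominated-convergence argument built on top of the already-available one-variable density result.
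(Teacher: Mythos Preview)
Your proof is correct and takes a genuinely different route from the paper's. The paper argues directly at the level of the two-variable seminorm via the parallelogram law: from the contractivity $D_{\mu_1,\mu_2}(f_R) \le D_{\mu_1,\mu_2}(f)$ (Lemma~\ref{contractivity-D}) one gets $D_{\mu_1,\mu_2}(f-f_R) + D_{\mu_1,\mu_2}(f+f_R) \le 4D_{\mu_1,\mu_2}(f)$, and then Fatou's lemma applied to the integral representation of $D_{\mu_1,\mu_2}$ gives $\liminf_R D_{\mu_1,\mu_2}(f+f_R) \ge 4D_{\mu_1,\mu_2}(f)$, forcing the limsup of the difference to vanish. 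Your approach instead establishes and exploits an explicit diagonal decomposition $D_{\mu_1,\mu_2}(f) = \sum_n D_{\mu_1}(U_n) + \sum_m D_{\mu_2}(V_m)$, reducing the problem to the one-variable density result plus dominated convergence on $\mathbb Z_+$. The paper's argument is slicker and more structural (it works verbatim for any seminorm with a contractive dilation and a Fatou-type lower-semicontinuity), while yours is more concrete and yields the diagonal formula as a useful byproduct. One minor correction: your citation of \cite[Proposition~3]{S1997} for the one-variable density $D_\mu(g-g_R)\to 0$ is inaccurate, since that proposition gives only the contractivity $D_\mu(g_R)\le D_\mu(g)$; the density statement itself is the one-variable parallelogram argument (see \cite[Theorem~7.3.1]{EKMR2014}).
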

\begin{proof} The proof is an adaptation of that of \cite[Theorem~7.3.1]{EKMR2014} to the present situation. 
For $R=(R_1, R_2) \in (0, 1)^2,$ by the Parallelogram law (which holds for any seminorm) and Lemma \ref{contractivity-D}, 
\beq \label{para-law}
D_{\mu_1, \mu_2}(f-f_{R}) + D_{\mu_1, \mu_2}(f+f_{R}) &=& 2(D_{\mu_1, \mu_2}(f) + D_{\mu_1, \mu_2}(f_{R})) \notag \\
&\Le & 4D_{\mu_1, \mu_2}(f).
\eeq
We claim that
\beq \label{claim}
\liminf_{R_1, R_2 \rar 1^{-}} D_{\mu_1, \mu_2}(f+f_{R}) \Ge 4D_{\mu_1, \mu_2}(f).
\eeq 
To see this, fix $r \in (0, 1).$ By Fatou's lemma, 
\beq \label{appli-Fatou}
&&  \liminf_{R_1, R_2 \rar 1^{-}} \Big(\int_{\mathbb T} D_{\mu_1}((f+f_R)(\cdot, re^{i\theta})) d\theta 
+   \int_{\mathbb T} D_{\mu_2}((f+f_R)(re^{i\theta}, \cdot))d\theta\Big) \notag \\
& \Ge & 4\Big(\int_{\mathbb T} D_{\mu_1}(f(\cdot, re^{i\theta})) d\theta 
+    \int_{\mathbb T} D_{\mu_2}(f(re^{i\theta}, \cdot))d\theta\Big).
\eeq
On the other hand, by Lemma~\ref{increasing-lem},
\beqn
&& D_{\mu_1, \mu_2}(f+f_{R}) \\
&\Ge & \int_{\mathbb T} D_{\mu_1}((f+f_R)(\cdot, re^{i\theta})) d\theta 
+ \int_{\mathbb T} D_{\mu_2}((f+f_R)(re^{i\theta}, \cdot))d\theta. 
\eeqn
After taking $\liminf$ on both sides (one by one) and applying \eqref{appli-Fatou}, we get
\beqn
&& \liminf_{R_1, R_2 \rar 1^{-}} D_{\mu_1, \mu_2}(f+f_{R}) \\
& \Ge & 
 4\Big(\int_{\mathbb T} D_{\mu_1}(f(\cdot, re^{i\theta})) d\theta 
+    \int_{\mathbb T} D_{\mu_2}(f(re^{i\theta}, \cdot))d\theta\Big).
\eeqn
Letting $r \rar 1^{-}$ on the right-hand side now yields \eqref{claim} (see \eqref{formula-D}). Finally, note that by \eqref{para-law},
\beqn
\limsup_{R_1, R_2 \rar 1^{-}} D_{\mu_1, \mu_2}(f-f_{R}) \Le 4D_{\mu_1, \mu_2}(f) - \liminf_{R_1, R_2 \rar 1^{-}} D_{\mu_1, \mu_2}(f+f_{R}),
\eeqn
and hence by \eqref{claim}, we get  $$\limsup_{R_1, R_2 \rar 1^{-}} D_{\mu_1, \mu_2}(f-f_{R})=0,$$ which completes the proof.   
\end{proof}

We now complete the proof of Theorem~\ref{thm1}.

\begin{proof}[Proof of Theorem~\ref{thm1}] Parts (i) and (iii) are Lemmas~\ref{multiplier} and \ref{Richter-formula}, respectively. To see (ii), 
let $f \in \mathcal D(\mu_1, \mu_2)$ and $\epsilon >0.$ It suffices to check that
there exists a polynomial $p$ in $z_1$ and $z_2$ such that 
$\|f-p\|_{\mathcal D(\mu_1, \mu_2)} < \epsilon.$
It is easy to see using Lemma \ref{poly-density-lem} that there exist an $R=(R_1, R_2) \in (0, 1)^2$ such that 
\beq \label{estimate-1}
\|f-f_{R}\|_{\mathcal D(\mu_1, \mu_2)} < \epsilon/2.
\eeq
Since $f_R$ is holomorphic in an open neighborhood of ${\overline{\mathbb D}}^2,$ there exists a polynomial $p$ such that $$\|\partial^{\alpha}f_R-\partial^{\alpha}p\|_{\infty, {\overline{\mathbb D}^2}} < \frac{\sqrt{\epsilon}}{4\sqrt{M}}, \quad \alpha \in \{(0, 0), (1, 0), (0, 1)\},$$ where $M=\max\big\{\int_{\mathbb D}P_{\mu_j}(w)dA(w) : j=1, 2\big\}+1.$ This together with the fact that the norm on $H^2(\mathbb D^2)$ is dominated by the $\|\cdot\|_{\infty, \overline{\mathbb D}^2}$ shows that $\|f_{R}-p\|_{\mathcal D(\mu_1, \mu_2)} < \epsilon/2.$ Combining this with \eqref{estimate-1} yields $\|f-p\|_{\mathcal D(\mu_1, \mu_2)} < \epsilon,$ which completes the proof.   
\end{proof}

The following provides a ground to discuss operator theory on $\mathcal D(\mu_1, \mu_2).$
\begin{corollary} \label{cyclic-coro} For $j=1, 2,$ let $\mathscr M_{z_j}$ denote the operator of multiplication by the coordinate function $z_j.$ Then 
the commuting pair $\mathscr M_z=(\mathscr M_{z_1}, \mathscr M_{z_2})$ on $\mathcal D(\mu_1, \mu_2)$ is a cyclic toral $2$-isometry with cyclic vector $1.$ 
\end{corollary}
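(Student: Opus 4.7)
The plan is to reduce everything to polynomials, where the formula \eqref{formula-Richter} from Lemma~\ref{Richter-formula} makes the required identities transparent, and then extend to $\mathcal D(\mu_1,\mu_2)$ by density (Theorem~\ref{thm1}(ii)) together with boundedness of the $\mathscr M_{z_j}$ (Lemma~\ref{multiplier}).

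First I would dispose of cyclicity and commutativity. Commutativity of $\mathscr M_{z_1}$ and $\mathscr M_{z_2}$ is immediate from the pointwise relation $z_1z_2 = z_2z_1$. For cyclicity, note that $\mathscr M_z^{\alpha} 1 = z_1^{\alpha_1} z_2^{\alpha_2}$ for every $\alpha=(\alpha_1,\alpha_2)\in\mathbb Z_+^2$, so $\bigvee\{\mathscr M_z^{\alpha} 1 : \alpha\in\mathbb Z_+^2\}$ is the closure of the polynomials in $\mathcal D(\mu_1,\mu_2)$, which equals $\mathcal D(\mu_1,\mu_2)$ by Theorem~\ref{thm1}(ii).

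Next I would verify the toral $2$-isometric identity \eqref{C1} by checking that the self-adjoint operator $A_{i,j}:=I-\mathscr M_{z_i}^*\mathscr M_{z_i}-\mathscr M_{z_j}^*\mathscr M_{z_j}+\mathscr M_{z_j}^*\mathscr M_{z_i}^*\mathscr M_{z_i}\mathscr M_{z_j}$ satisfies $\langle A_{i,j}p,p\rangle=0$ for every polynomial $p$. Writing this inner product in terms of norms, the identity to verify is
\begin{equation*}
\|p\|^2_{\mathcal D(\mu_1,\mu_2)}-\|\mathscr M_{z_i}p\|^2_{\mathcal D(\mu_1,\mu_2)}-\|\mathscr M_{z_j}p\|^2_{\mathcal D(\mu_1,\mu_2)}+\|\mathscr M_{z_i}\mathscr M_{z_j}p\|^2_{\mathcal D(\mu_1,\mu_2)}=0.
\end{equation*}
I would now plug in the formula \eqref{formula-Richter} with the appropriate choices of $(k,l)$: for $i=j=1$ use $(k,l)=(1,0)$ and $(2,0)$; for $i=j=2$ use $(0,1)$ and $(0,2)$; and for $\{i,j\}=\{1,2\}$ use $(1,0)$, $(0,1)$, and $(1,1)$. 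In each case the boundary integrals cancel in pairs, leaving $0$.

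Finally, since $A_{i,j}$ is bounded (by Lemma~\ref{multiplier}) and self-adjoint, vanishing of the quadratic form $\langle A_{i,j}\,\cdot\,,\,\cdot\,\rangle$ on the dense subset of polynomials (Theorem~\ref{thm1}(ii)) forces $A_{i,j}=0$ on all of $\mathcal D(\mu_1,\mu_2)$. This establishes \eqref{C1} for $1\le i,j\le 2$, completing the proof. I do not anticipate any serious obstacle: the only slightly delicate point is the dense-subset/self-adjointness argument, but this is routine once boundedness of the coordinate multipliers is in hand.
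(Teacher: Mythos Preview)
Your proof is correct and follows essentially the same approach as the paper: the paper's argument simply says that boundedness comes from Theorem~\ref{thm1}(i) (together with the closed graph theorem), cyclicity from Theorem~\ref{thm1}(ii), and the toral $2$-isometric identity from (ii) and (iii) of Theorem~\ref{thm1}, which is exactly your strategy of verifying \eqref{C1} on polynomials via \eqref{formula-Richter} and extending by density. Your write-up merely fills in the details the paper leaves to the reader.
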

\begin{proof}
Note that by Theorem~\ref{thm1}(i) and the closed graph theorem, $\mathscr M_z$ defines a pair of bounded linear operators $\mathscr M_{z_1}$ and $\mathscr M_{z_2}$ on $\mathcal D(\mu_1, \mu_2).$ 
By Theorem~\ref{thm1}(ii), $\mathscr M_z$ is cyclic with cyclic vector $1.$ 
Finally, the fact that $\mathscr M_z$ is a toral $2$-isometry may be derived from (ii) and (iii) of Theorem~\ref{thm1}.
\end{proof}

Let $\kappa : \mathbb D \times \mathbb D \rar \mathbb C$ denote the reproducing kernel of 
$\mathcal D(\mu_1, \mu_2)$ (see Lemma~\ref{r-kernel}). 
\begin{corollary} \label{T-spectrum}  For any $w \in \mathbb D^2,$ $\ker(\mathscr M_z- w)=\{0\}$ and $\ker(\mathscr M^*_z - w)$ is the one-dimensional space spanned by $\kappa(\cdot, \overline{w}).$ 
\end{corollary}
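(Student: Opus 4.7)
The proof splits into three short independent steps, and I would organize it accordingly.

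First, for the kernel of $\mathscr M_z - w$: suppose $f \in \mathcal D(\mu_1,\mu_2)$ satisfies $(\mathscr M_{z_j}-w_j)f = 0$ for $j=1,2$. Since $\mathcal D(\mu_1,\mu_2) \subseteq \mathcal O(\mathbb D^2)$, these are pointwise identities, so $(z_j - w_j)f(z) \equiv 0$ on $\mathbb D^2$. In particular $f(z) = 0$ for every $z$ with $z_1 \neq w_1$, and holomorphy (equivalently continuity) forces $f \equiv 0$. This gives $\ker(\mathscr M_z - w) = \{0\}$.

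Next, for $\ker(\mathscr M_z^* - w)$, the plan is to exhibit one explicit nonzero eigenvector and then invoke cyclicity for uniqueness. By Lemma~\ref{r-kernel}, $\mathcal D(\mu_1,\mu_2)$ is a reproducing kernel Hilbert space with kernel $\kappa$, and the standard computation
\[
\langle f, \mathscr M_{z_j}^{*} \kappa(\cdot, \zeta)\rangle = \langle \mathscr M_{z_j}f, \kappa(\cdot,\zeta)\rangle = \zeta_j f(\zeta) = \langle f, \overline{\zeta_j}\kappa(\cdot,\zeta)\rangle
\]
(valid for every $f$) yields $\mathscr M_{z_j}^{*}\kappa(\cdot,\zeta) = \overline{\zeta_j}\kappa(\cdot,\zeta)$ for all $\zeta \in \mathbb D^2$. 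Specializing to $\zeta = \overline w$, which lies in $\mathbb D^2$ whenever $w$ does, shows $\kappa(\cdot, \overline w) \in \ker(\mathscr M_z^{*} - w)$. It is nonzero, for otherwise the reproducing property would give $f(\overline w) = \langle f, \kappa(\cdot,\overline w)\rangle = 0$ for every $f \in \mathcal D(\mu_1,\mu_2)$, contradicting the fact that $1 \in \mathcal D(\mu_1,\mu_2)$ by Lemma~\ref{r-kernel}.

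Finally, to upgrade this to a one-dimensional description, I would apply Corollary~\ref{cyclic-coro}, which asserts that $\mathscr M_z$ is cyclic on $\mathcal D(\mu_1,\mu_2)$ with cyclic vector $1$. The abstract property \eqref{cyclic-kernel} then forces $\dim \ker(\mathscr M_z^{*} - w) \le 1$ for every $w \in \mathbb C^2$, and combined with the previous paragraph this gives $\ker(\mathscr M_z^{*} - w) = \mathbb C\,\kappa(\cdot, \overline w)$, completing the proof. I do not anticipate a genuine obstacle here; the only subtlety worth flagging is to keep straight the conjugation $\zeta \mapsto \overline\zeta$ so that the eigenvalue equation for $\mathscr M_z^{*}$ indeed reads with $w$ rather than $\overline w$.
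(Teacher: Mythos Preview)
Your proof is correct and follows essentially the same approach as the paper: holomorphy to rule out eigenvalues for $\mathscr M_z - w$, the reproducing property to exhibit $\kappa(\cdot,\overline{w})$ as a nonzero eigenvector of $\mathscr M_z^*$, and cyclicity via \eqref{cyclic-kernel} to pin down the dimension as exactly one. The only difference is that you spell out the reproducing-kernel eigenvector computation and the conjugation bookkeeping more explicitly than the paper does.
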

\begin{proof} Since $\mathcal D(\mu_1, \mu_2)$ is contained in the space of complex-valued holomorphic functions on $\mathbb D^2,$ the pair $\mathscr M_z$ has no eigenvalue. 
By Theorem~\ref{thm1}, $\mathscr M_z$ is cyclic, and hence, for any $w \in \mathbb C^2,$ the dimension of $\ker(\mathscr M^*_z - w)$ is at most $1$ (see \eqref{cyclic-kernel}). If $w \in \mathbb D^2,$ then by the reproducing property of $\mathcal D(\mu_1, \mu_2)$ (see Lemma~\ref{r-kernel}), 
$\kappa(\cdot, \overline{w}) \in \ker(\mathscr M^*_z - w).$ Since $1 \in \mathcal D(\mu_1, \mu_2),$ once again by the reproducing property of $\mathcal D(\mu_1, \mu_2),$
$\kappa(\cdot, \overline{w})\neq 0.$ 
\end{proof}

Before we state the next application of Theorem~\ref{thm1}, we need a formula for the inner-product of monomials in $\mathcal D(\mu_1, \mu_2).$ 
\begin{proposition}
For $\mu \in M_+(\mathbb T)$ and $j \Ge 0,$ let $\hat{\mu}(j) = \int_{\mathbb T} \zeta^{-j} d\mu(\zeta).$ Then 
\beq \label{inner-p-formula}
\inp{z^{m}_1z^n_2}{z^{p}_1z^q_2}_{\mathcal D(\mu_1, \mu_2)} = \begin{cases}
0 & \mbox{if}~ m\neq p, ~n \neq q, \\
\min\{n, q\}\hat{\mu}_2(q-n) & \mbox{if}~ m=p, ~n \neq q, \\
\min\{m, p\}\hat{\mu}_1(p-m) & \mbox{if}~ m \neq p, ~n = q, \\
1+m\hat{\mu}_1(0) + n\hat{\mu}_2(0) & \mbox{if}~ m= p, ~n = q.
\end{cases}
\eeq 
In particular, the monomials are orthogonal in $\mathcal D(\mu_1, \mu_2)$ if and only if $\mu_1$ and $\mu_2$ are nonnegative multiples of the Lebesgue measure on $\mathbb T.$
\end{proposition}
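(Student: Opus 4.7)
The plan is to reduce all cases to Theorem~\ref{thm1}(iii) by polarization, using the fact that $\kappa(\cdot,0)=1$ (Lemma~\ref{r-kernel}), which gives $\inp{f}{1}_{\mathcal D(\mu_1,\mu_2)} = f(0)$ for every $f \in \mathcal D(\mu_1,\mu_2)$. The diagonal case $(m,n)=(p,q)$ follows immediately from Theorem~\ref{thm1}(iii) applied with $p=1$, $k=m$, $l=n$: this gives $\|z_1^m z_2^n\|^2 = 1 + m\mu_1(\mathbb T) + n\mu_2(\mathbb T)$, and $\mu_j(\mathbb T) = \hat\mu_j(0)$.

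For $m=p$ and $n \neq q$, I would assume $n<q$ (the case $n>q$ being analogous by factoring out $z_2^q$). Factor $\alpha z_1^m z_2^n + \beta z_1^m z_2^q = z_1^m z_2^n\bigl(\alpha + \beta z_2^{q-n}\bigr)$ and apply Theorem~\ref{thm1}(iii) with $k=m$, $l=n$, and $p_{\alpha,\beta} := \alpha + \beta z_2^{q-n}$. The $\mu_1$-integral contributes only diagonal $|\alpha|^2+|\beta|^2$ terms because $\int_{\mathbb T} e^{\pm i(q-n)\theta}\,d\theta = 0$; the $\mu_2$-integral contributes the cross terms $\alpha\bar\beta\,\hat\mu_2(q-n) + \bar\alpha\beta\,\hat\mu_2(n-q)$; and $\|p_{\alpha,\beta}\|^2$ has no cross term because $\inp{1}{z_2^{q-n}} = 0$. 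Equating coefficients of $\alpha\bar\beta$ and $\bar\alpha\beta$ (taking $\alpha,\beta \in \{1,i\}$ to extract real and imaginary parts) yields $\inp{z_1^m z_2^n}{z_1^m z_2^q} = n\,\hat\mu_2(q-n) = \min\{n,q\}\,\hat\mu_2(q-n)$. The case $m \neq p$, $n=q$ is symmetric, with $\mu_1$ in place of $\mu_2$.

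For $m \neq p$ and $n \neq q$, assume $m<p$, $n<q$ (other sign combinations being handled by analogous factorizations). Write $\alpha z_1^m z_2^n + \beta z_1^p z_2^q = z_1^m z_2^n\bigl(\alpha + \beta z_1^{p-m} z_2^{q-n}\bigr)$ and apply Theorem~\ref{thm1}(iii). Both the $\mu_1$- and the $\mu_2$-integrals lose their cross terms upon $\theta$-integration, and $\inp{1}{z_1^{p-m} z_2^{q-n}} = 0$, so the coefficient of $\alpha\bar\beta$ forces the inner product to vanish. For the final assertion, if each $\mu_j$ is a nonnegative multiple of $d\theta$ then $\hat\mu_j(k) = 0$ for all $k \neq 0$, so \eqref{inner-p-formula} gives orthogonality. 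Conversely, orthogonality of all monomials forces $\hat\mu_2(k) = 0$ for every $k \in \mathbb Z\setminus\{0\}$ via the choices $m=p=0$ with $(n,q) = (1,k+1)$ or $(|k|+1,1)$ according as $k\geq 1$ or $k\leq -1$; symmetrically $\hat\mu_1(k) = 0$ for every $k \neq 0$. Uniqueness of Fourier coefficients of a finite positive Borel measure on $\mathbb T$ then identifies each $\mu_j$ as a nonnegative multiple of $d\theta$. The main bookkeeping challenge is the polarization in the $m=p$, $n\neq q$ case, where one must track $\hat\mu_2(q-n)$ versus $\hat\mu_2(n-q) = \overline{\hat\mu_2(q-n)}$ carefully to land on $\min\{n,q\}\,\hat\mu_2(q-n)$ with the correct conjugation.
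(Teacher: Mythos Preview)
Your route differs from the paper's. The paper polarizes the defining Dirichlet integral directly and then quotes the one-variable formula $\langle w^r, w^s\rangle_{\mathcal D(\mu)} = \delta(r,s) + \min\{r,s\}\,\hat\mu(s-r)$ from \cite{O2004}; you instead pull everything through Theorem~\ref{thm1}(iii). Your method is pleasant in that it stays internal to results already established in the paper, and it works as written for the diagonal case, for $m=p,\ n\ne q$ and its mirror, and for $m\ne p,\ n\ne q$ when $m-p$ and $n-q$ have the same sign.

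There is, however, a real gap behind the phrase ``other sign combinations being handled by analogous factorizations.'' If $m<p$ and $n>q$, the only common monomial you can extract is $z_1^{m} z_2^{q}$, leaving $p_{\alpha,\beta}=\alpha\, z_2^{\,n-q}+\beta\, z_1^{\,p-m}$. The $\mu_1$- and $\mu_2$-integrals in \eqref{formula-Richter} do lose their cross terms under the $\theta$-integration, but $\|p_{\alpha,\beta}\|^2_{\mathcal D(\mu_1,\mu_2)}$ contributes the cross term $2\,\mathrm{Re}\bigl(\alpha\bar\beta\,\langle z_2^{\,n-q}, z_1^{\,p-m}\rangle\bigr)$, which is precisely an instance of the inner product you are trying to show vanishes; Lemma~\ref{r-kernel} does not help here since neither argument is the constant $1$, and conjugate symmetry only swaps this subcase with $m>p,\ n<q$. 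You need one extra line to close the loop: for $a,b\ge 1$ one has $\langle z_2^{a}, z_1^{b}\rangle_{\mathcal D(\mu_1,\mu_2)}=0$ directly from the definition of the norm (via polarization), because $\partial_1(z_2^{a})=0$ kills the $\mu_1$-contribution, $\partial_2(z_1^{b})=0$ kills the $\mu_2$-contribution, and the $H^2$-pairing of distinct monomials is zero. With that base case supplied, your reduction in the mixed-sign subcase goes through, and the remainder of your argument (including the ``In particular'' part) is correct.
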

\begin{proof}
Fix non-negative integers $m, n, p, q.$ 
By the polarization identity, 
\beqn
&& \inp{z^{m}_1z^n_2}{z^{p}_1z^q_2}_{\mathcal D(\mu_1, \mu_2)} = \inp{z^{m}_1z^n_2}{z^{p}_1z^q_2}_{H^2(\mathbb D^2)} \\
&+& \lim_{r \rar 1^{-}} \int_{\mathbb T}  r^{n+q} e^{i (n-q) \theta} \int_{\mathbb D} \partial_1(z^m_1)   \, \overline{\partial_1(z^p_1)} P_{\mu_1}(z_1) \,dA(z_1)d\theta \\
&+& \lim_{r \rar 1^{-}} \int_{\mathbb T}  r^{m+p}e^{i (m-p) \theta} \int_{\mathbb D} \partial_2(z^n_2)   \, \overline{\partial_2(z^q_2) } P_{\mu_2}(z_2) \,dA(z_2)d\theta.
\eeqn
Since $\inp{z^{m}_1z^n_2}{z^{p}_1z^q_2}_{H^2(\mathbb D^2)}=\delta(m, p)\delta(n, q)$ with $\delta(\cdot, \cdot)$ denoting the Kronecker delta of two variables, \eqref{inner-p-formula} may be deduced from the following formula for the inner-product of the Dirichlet-type space $\mathcal D(\mu)$ (see \cite[Equation~(3.2)]{O2004}): 
\beq \label{formula-o} \langle z^r, z^s\rangle_{\mathcal D(\mu)}=\delta(r,s) + \min\{r, s\}
\hat{\mu}(s-r), \quad r, s \in \mathbb Z_+
\eeq
(this formula may also be derived directly using \cite[Theorem~11.9]{Ru1987}). 
The ``In particular'' part follows from the Weierstrass approximation theorem and Riesz representation theorem.   
\end{proof}
\begin{remark} Assume that $\mu_1, \mu_2$ are non-zero. 
It is easy to see using \eqref{inner-p-formula} and \eqref{formula-o} that 
$\|f\|_{\mathcal D(\mu_1, \mu_2)} = \|f\|_{\mathcal D(\mu_1) \otimes \mathcal D(\mu_2)}$ holds for all monomials $f$ if
and only if at least one of $\mu_1$ and $\mu_2$ is the zero measure.
In particular, $\mathcal D(\mu_1, \mu_2) \neq \mathcal D(\mu_1) \otimes \mathcal D(\mu_2),$ in general. 
\end{remark}

The following is a consequence of \eqref{inner-p-formula} (see Definition~\ref{def-ws}). 
\begin{corollary} \label{wandering-s} 
For $\mu_1, \mu_2 \in M_+(\mathbb T),$
 the subspace of $\mathcal D(\mu_1, \mu_2)$ spanned by the constant function $1$ is a wandering subspace for $\mathscr M_z$ on $\mathcal D(\mu_1, \mu_2).$
\end{corollary}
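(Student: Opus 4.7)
The plan is to verify the wandering-subspace condition directly by plugging into the inner product formula \eqref{inner-p-formula}. Since $\mathcal{W} = \mathbb{C} \cdot 1$, the condition amounts to showing orthogonality of certain monomials in $\mathcal{D}(\mu_1, \mu_2)$.

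Unwinding Definition \ref{def-ws} with $d = 2$, $T = \mathscr{M}_z$, $\mathcal{W} = \mathbb{C} \cdot 1$, the claim splits into two cases. For $i = 1$, I must check
\[
\langle z_2^{\alpha_2},\, z_1^{\beta_1} z_2^{\beta_2} \rangle_{\mathcal{D}(\mu_1, \mu_2)} = 0 \quad \text{for all } \alpha_2, \beta_2 \in \mathbb{Z}_+,\ \beta_1 \geq 1,
\]
and for $i = 2$, I must check the symmetric identity
\[
\langle z_1^{\alpha_1},\, z_1^{\beta_1} z_2^{\beta_2} \rangle_{\mathcal{D}(\mu_1, \mu_2)} = 0 \quad \text{for all } \alpha_1, \beta_1 \in \mathbb{Z}_+,\ \beta_2 \geq 1.
\]

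For the first case, set $(m, n, p, q) = (0, \alpha_2, \beta_1, \beta_2)$ in \eqref{inner-p-formula}. Because $\beta_1 \geq 1$, we always have $m \neq p$, so only the second and third branches of \eqref{inner-p-formula} are relevant. If $\alpha_2 \neq \beta_2$, we fall in the top branch and the inner product is zero. If $\alpha_2 = \beta_2$, we fall in the third branch and the inner product equals $\min\{0, \beta_1\} \hat{\mu}_1(\beta_1) = 0$. Either way, orthogonality holds. The second case is handled identically after swapping the roles of the two coordinates, using $\min\{0, \beta_2\} = 0$ in the analogous branch.

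There is no real obstacle here: the entire argument is a case analysis using \eqref{inner-p-formula}, and the essential mechanism is that one of the exponents entering the relevant $\min$ is always $0$. It is worth noting that the statement is consistent with Remark \ref{rmk-wandering}, although that remark does not directly apply, since $\mathscr{M}_{z_1}^{*}\mathscr{M}_{z_2} = \mathscr{M}_{z_2}\mathscr{M}_{z_1}^{*}$ is not asserted on $\mathcal{D}(\mu_1, \mu_2)$ in general; the verification instead proceeds through the explicit inner product formula just established.
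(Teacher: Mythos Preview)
Your proof is correct and follows exactly the route the paper indicates: the paper simply remarks that the corollary is a consequence of \eqref{inner-p-formula}, and your case analysis is the natural (and intended) way to unpack that consequence. The key observation that $\min\{0,\beta_j\}=0$ forces the relevant branch of \eqref{inner-p-formula} to vanish is precisely what makes the argument work.
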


A bounded linear operator $T$ on a Hilbert space is {\it essentially normal} if $T^*T-TT^*$ is a compact operator. 
An essentially normal operator is said to be {\it essentially unitary} if $T^*T-I$ is compact. 
Unlike the case of one variable Dirichlet-type spaces (see \cite[Proposition~2.21]{C2007}), $\mathcal D(\mu_1, \mu_2)$ does not support essentially normal multiplication $2$-tuple $\mathscr M_z.$  
\begin{corollary} \label{coro-e-n} 
The multiplication operators $\mathscr M_{z_1}$ and $\mathscr M_{z_2}$ on $\mathcal D(\mu_1, \mu_2)$ are never essentially normal.  
\end{corollary}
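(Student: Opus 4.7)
The strategy is to exhibit an infinite-dimensional subspace inside $\ker \mathscr M_{z_1}^*$ on which the self-commutator $\mathscr M_{z_1}^* \mathscr M_{z_1} - \mathscr M_{z_1} \mathscr M_{z_1}^*$ has quadratic form uniformly bounded below by $1$. Since a self-adjoint compact operator has quadratic form tending to $0$ on every orthonormal sequence (as an orthonormal sequence converges weakly to $0$), this will rule out essential normality of $\mathscr M_{z_1}$. The case of $\mathscr M_{z_2}$ is entirely symmetric, with the roles of the two coordinates swapped.

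To locate kernel, I read off formula \eqref{inner-p-formula} with $m = 0$: for every $n \Ge 0$, $p \Ge 1$ and $q \Ge 0$, the inner product $\langle z_2^n, z_1^p z_2^q \rangle_{\mathcal D(\mu_1, \mu_2)}$ vanishes. Indeed, the case $n \neq q$ gives $0$ outright, while $n = q$ gives $\min\{0, p\}\hat{\mu}_1(p) = 0$. Combined with the polynomial density (Theorem~\ref{thm1}(ii)) and the continuity of $\mathscr M_{z_1}$, this shows that $z_2^n$ is orthogonal to $\mathscr M_{z_1}(\mathcal D(\mu_1, \mu_2))$, that is, $z_2^n \in \ker \mathscr M_{z_1}^*$ for every $n \Ge 0$. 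Linear independence of $\{z_2^n\}_{n \Ge 0}$ is immediate from evaluation at the slice $z_1 = 0$, so $\ker \mathscr M_{z_1}^*$ is infinite-dimensional.

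The next step applies the toral $2$-isometry identity \eqref{C1} with $i = j = 1$ (available by Corollary~\ref{cyclic-coro}), which reduces to $\mathscr M_{z_1}^{*2}\mathscr M_{z_1}^{2} - 2 \mathscr M_{z_1}^* \mathscr M_{z_1} + I = 0$; that is, $\mathscr M_{z_1}$ is a classical single-variable $2$-isometry. A standard consequence is that $\|\mathscr M_{z_1}^n x\|^2 = n(\|\mathscr M_{z_1} x\|^2 - \|x\|^2) + \|x\|^2$ is affine in $n \Ge 0$ with a nonnegativity constraint for every $n$, forcing $\mathscr M_{z_1}^* \mathscr M_{z_1} \Ge I$. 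For $f \in \ker \mathscr M_{z_1}^*$ we then have
\[
\langle (\mathscr M_{z_1}^* \mathscr M_{z_1} - \mathscr M_{z_1} \mathscr M_{z_1}^*) f, f \rangle = \|\mathscr M_{z_1} f\|^2 - \|\mathscr M_{z_1}^* f\|^2 = \|\mathscr M_{z_1} f\|^2 \Ge \|f\|^2,
\]
and testing on any orthonormal sequence chosen inside the infinite-dimensional subspace $\ker \mathscr M_{z_1}^*$ closes out the contradiction with compactness.

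The main obstacle is the first step. In the one-variable Dirichlet-type theory $\ker \mathscr M_z^*$ is one-dimensional, which is exactly the room available for essential normality there. In the bidisc setting, however, the toral $2$-isometry structure forces infinitely many independent elements into the kernel, and verifying this requires extracting the vanishing $\min\{0, p\} = 0$ from \eqref{inner-p-formula} even though the monomials in $\mathcal D(\mu_1, \mu_2)$ are not pairwise orthogonal in general.
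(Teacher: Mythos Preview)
Your proof is correct. Both your argument and the paper's rest on the same key observation, extracted from \eqref{inner-p-formula}, that each $z_2^n$ lies in $\ker\mathscr M_{z_1}^*$, so that this kernel is infinite-dimensional; and both use that $\mathscr M_{z_1}$ is a $2$-isometry. The difference is in how the contradiction is closed. The paper passes to the Calkin algebra: a normal $2$-isometry is unitary, so an essentially normal $2$-isometry is essentially unitary, hence Fredholm, and Atkinson's theorem then forces $\ker\mathscr M_{z_1}^*$ to be finite-dimensional. You instead bound the self-commutator directly: since $\mathscr M_{z_1}^*\mathscr M_{z_1}\Ge I$ and $\mathscr M_{z_1}^*$ vanishes on $\ker\mathscr M_{z_1}^*$, the quadratic form of $[\mathscr M_{z_1}^*,\mathscr M_{z_1}]$ is at least $1$ on unit vectors there, which is incompatible with compactness on an infinite-dimensional subspace. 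Your route is a bit more elementary (no Calkin algebra, no Atkinson), while the paper's route isolates the structural fact that an essentially normal $2$-isometry must be Fredholm, which is of some independent interest.
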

\begin{proof} 
By Theorem~\ref{thm1}, the multiplication $2$-tuple $\mathscr M_z$ is a toral $2$-isometry. In particular, $\mathscr M_{z_1}$ and $\mathscr M_{z_2}$ are $2$-isometries. If these are essentially normal, then 
the image of $\mathscr M_{z_1}$ and $\mathscr M_{z_2}$ in the Calkin algebra is a normal $2$-isometry, and hence $\mathscr M_{z_1}$ and $\mathscr M_{z_2}$ are essentially unitary (since a normal $2$-isometry, being invertible, is a unitary). It follows that  $\mathscr M_{z_1}$ and $\mathscr M_{z_2}$ are Fredholm. In view of Atkinson's theorem (see \cite[Theorem~XI.2.3]{Co1990}), it suffices to check that the kernels of $\mathscr M^*_{z_1}$ and $\mathscr M^*_{z_2}$ are of infinite dimension.  To see this, fix a nonnegative integer $j.$ Note that by \eqref{inner-p-formula},
\beqn
\inp{\mathscr M^*_{z_1}z^j_2}{z^p_1z^q_2} = \inp{z^j_2}{z^{p+1}_1z^q_2}=0, \quad p, q \in \mathbb Z_+,
\eeqn
and hence by the linearity of the inner-product and the density of the polynomials in $\mathcal D(\mu_1, \mu_2)$ (see Theorem~\ref{thm1}(ii)), we obtain $\mathscr M^*_{z_1}z^j_2=0.$ Similarly, one can check that $z^j_1 \in \ker \mathscr M^*_{z_2},$ completing the proof. 
\end{proof}


\section{Proof of Theorem~\ref{D-mu-estimate} and a consequence} 


We begin the proof of Theorem~\ref{D-mu-estimate} with the following special case.

\begin{lemma} \label{d-property}
The Hardy space $H^2(\mathbb D^d)$ has the division property. 
\end{lemma}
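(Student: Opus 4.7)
By symmetry of the $d$ coordinate variables, it suffices to verify the $1$-division property. So fix $\lambda=(\lambda_{1},\dots,\lambda_{d})\in\mathbb D^{d}$ and $f\in H^{2}(\mathbb D^{d})$ such that $f$ vanishes on the slice $\{z\in\mathbb D^{d}:z_{1}=\lambda_{1}\}$. The plan is to produce $g(z):=f(z)/(z_{1}-\lambda_{1})$ as an element of $H^{2}(\mathbb D^{d})$, together with the quantitative estimate
\begin{equation*}
\|g\|_{H^{2}(\mathbb D^{d})}\;\Le\;\frac{1}{1-|\lambda_{1}|}\,\|f\|_{H^{2}(\mathbb D^{d})}.
\end{equation*}

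First I would settle that $g\in\mathcal O(\mathbb D^{d})$. Writing $z=(z_{1},z')$ with $z'\in\mathbb D^{d-1}$ and applying the fundamental theorem of calculus to $t\mapsto f(\lambda_{1}+t(z_{1}-\lambda_{1}),z')$ gives
\begin{equation*}
f(z_{1},z')\;=\;f(\lambda_{1},z')+(z_{1}-\lambda_{1})\int_{0}^{1}(\partial_{1}f)(\lambda_{1}+t(z_{1}-\lambda_{1}),z')\,dt,
\end{equation*}
and the hypothesis $f(\lambda_{1},z')=0$ turns the right-hand side into $(z_{1}-\lambda_{1})$ times a holomorphic function, which must coincide with $g$.

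Next comes the slice-wise Blaschke step, which I expect to be the analytic heart of the argument. For each fixed $z'\in\mathbb D^{d-1}$, expanding $f$ as $f(z_{1},z')=\sum_{n\Ge 0}f_{n}(z')z_{1}^{n}$ with $f_{n}\in\mathcal O(\mathbb D^{d-1})$ and applying Cauchy--Schwarz to the tail (as in the proof of Lemma~\ref{Hardy-slice}) shows that the slice $f_{z'}(\zeta):=f(\zeta,z')$ lies in $H^{2}(\mathbb D)$. Since $f_{z'}(\lambda_{1})=0$ and the Blaschke factor $B(\zeta)=(\zeta-\lambda_{1})/(1-\overline{\lambda}_{1}\zeta)$ is inner, $f_{z'}/B\in H^{2}(\mathbb D)$ with $\|f_{z'}/B\|_{H^{2}(\mathbb D)}=\|f_{z'}\|_{H^{2}(\mathbb D)}$. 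Writing
\begin{equation*}
g_{z'}(\zeta)\;=\;\frac{f_{z'}(\zeta)}{\zeta-\lambda_{1}}\;=\;\frac{(f_{z'}/B)(\zeta)}{1-\overline{\lambda}_{1}\zeta}
\end{equation*}
and using the uniform bound $|1-\overline{\lambda}_{1}\zeta|\Ge 1-|\lambda_{1}|$ on $\mathbb D$, I obtain the key slice estimate
\begin{equation*}
\|g_{z'}\|_{H^{2}(\mathbb D)}^{2}\;\Le\;\frac{1}{(1-|\lambda_{1}|)^{2}}\,\|f_{z'}\|_{H^{2}(\mathbb D)}^{2},\qquad z'\in\mathbb D^{d-1}.
\end{equation*}

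Finally, I would integrate this pointwise bound over polycircles $\{|z_{j}|=r\}_{j=2}^{d}$. The Parseval identity applied in the first variable gives
\begin{equation*}
\int_{[0,2\pi]^{d-1}}\|h_{re^{i\theta'}}\|_{H^{2}(\mathbb D)}^{2}\,\frac{d\theta'}{(2\pi)^{d-1}}\;=\;\sum_{n\Ge 0}\sum_{\alpha'\in\mathbb Z^{d-1}_{+}}|\hat h(n,\alpha')|^{2}r^{2|\alpha'|}
\end{equation*}
for any $h\in\mathcal O(\mathbb D^{d})$ with slices in $H^{2}(\mathbb D)$, and by the formula \eqref{Hardy-norm-new-0} together with monotone convergence this quantity increases to $\|h\|_{H^{2}(\mathbb D^{d})}^{2}$ as $r\to 1^{-}$ (or equals $+\infty$ if $h\notin H^{2}(\mathbb D^{d})$). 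Applying this in both directions and integrating the slice estimate of the previous paragraph yields
\begin{equation*}
\|g\|_{H^{2}(\mathbb D^{d})}^{2}\;\Le\;\frac{1}{(1-|\lambda_{1}|)^{2}}\,\|f\|_{H^{2}(\mathbb D^{d})}^{2},
\end{equation*}
so $g\in H^{2}(\mathbb D^{d})$, as required. The only minor subtlety to watch is ensuring the slice $H^{2}$-norm is a measurable function of $z'$ before integrating, but this is immediate from its Taylor-series expression $\sum_{n}|f_{n}(z')|^{2}$.
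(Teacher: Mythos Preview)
Your proof is correct, but it takes a different and somewhat more elaborate route than the paper. The paper argues directly from the integral norm \eqref{Hardy-norm-new}: for $r\in(|\lambda_{1}|,1)$ one has the pointwise bound $|g(re^{i\theta_{1}},\dots,re^{i\theta_{d}})|\le |f(re^{i\theta_{1}},\dots,re^{i\theta_{d}})|/(r-|\lambda_{1}|)$ on the full torus, and taking the supremum in $r$ immediately gives $g\in H^{2}(\mathbb D^{d})$ (with the same constant $1/(1-|\lambda_1|)$ that you obtain). No Blaschke factor, no slice decomposition---just the reverse triangle inequality $|re^{i\theta_{1}}-\lambda_{1}|\ge r-|\lambda_{1}|$. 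For holomorphy the paper invokes Hartogs' separate analyticity, whereas your integral-remainder formula is more self-contained. Your detour through the Blaschke factor is perfectly valid and yields the same quantitative estimate; its only cost is the extra bookkeeping of slice $H^{2}$-norms and the integration identity in the last paragraph, which the paper's one-line torus estimate avoids entirely. The upshot is that both arguments prove the same thing with the same constant, but the paper's is noticeably shorter.
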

\begin{proof} For $j=1, \ldots, d$ and $\lambda=(\lambda_1, \ldots, \lambda_d) \in \mathbb D^d,$  
let $f \in H^2(\mathbb D^d)$ be such that $\{z \in \mathbb D^d : z_j =\lambda_j\} \subseteq Z(f).$ 
Let $w=(w_1, \ldots, w_d) \in \mathbb D^d.$ If $w_j \neq \lambda_j,$ then clearly $g_j(z) =\frac{f(z)}{z_j-\lambda_j}$ defines a holomorphic function in a neighborhood of $w.$
If $w_j =\lambda_j,$ then since $\{z \in \mathbb D^d : z_j =\lambda_j\} \subseteq Z(f),$ $f$ as a function of $z_j$ has a removable singularity at $w_j,$ and hence by Hartogs' separate analyticity theorem (see \cite[pp~1-2]{Ru1969}), $g_j$ is holomorphic in a neighborhood of $w.$ This shows that $g_j$ is holomorphic on $\mathbb D^d.$ 
To see that $g_j \in H^2(\mathbb D^d),$  
note that for $r \in (|\lambda_j|, 1)$ and $\theta_1, \ldots, \theta_d \in [0, 2\pi],$  
\beqn
\frac{|f(re^{i \theta_1}, \ldots, re^{i \theta_1})|}{|re^{i \theta_j}-\lambda_j|}
\Le  \frac{|f(re^{i \theta_1}, \ldots, re^{i \theta_1})|}{r-|\lambda_j|}. 
\eeqn
Since $f \in H^2(\mathbb D^d),$ it may now be deduced from \eqref{Hardy-norm-new} that $g_j \in H^2(\mathbb D^d).$  
\end{proof}

We also need the following fact essentially noticed in \cite{R1991}. 
\begin{lemma} 
\label{D-for-D(mu)}
For any $\mu \in M_+(\mathbb T),$ $\mathcal D(\mu)$ has the division property.
\end{lemma}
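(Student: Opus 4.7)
The plan is to leverage Richter's result \cite{R1991} that multiplication by the coordinate function $w$ on $\mathcal{D}(\mu)$ is a cyclic $2$-isometry, and deduce the division property from standard operator-theoretic principles rather than from a direct Dirichlet-integral computation.

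First, I would recall that every $2$-isometry $T$ is expansive, i.e., $\|Th\| \Ge \|h\|$ for all $h.$ Iterating the defining identity $I - 2T^*T + T^{*2}T^2 = 0$ yields $\|T^n h\|^2 = \|h\|^2 + n(\|Th\|^2 - \|h\|^2)$ for every $n \Ge 1,$ so $\|Th\| < \|h\|$ would force $\|T^n h\|^2 \to -\infty,$ a contradiction. Applied to $T = \mathscr{M}_w$ on $\mathcal{D}(\mu),$ this gives $\|wh\|_{\mathcal{D}(\mu)} \Ge \|h\|_{\mathcal{D}(\mu)}.$

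Next, for $\lambda \in \D,$ the triangle inequality combined with expansivity yields
\[
\|(\mathscr{M}_w - \lambda) h\|_{\mathcal{D}(\mu)} \Ge \|\mathscr{M}_w h\|_{\mathcal{D}(\mu)} - |\lambda|\,\|h\|_{\mathcal{D}(\mu)} \Ge (1 - |\lambda|)\,\|h\|_{\mathcal{D}(\mu)},
\]
so $\mathscr{M}_w - \lambda$ is bounded below and therefore has closed range. I would then identify this range as $\{f \in \mathcal{D}(\mu) : f(\lambda) = 0\}.$ Since $\mathcal{D}(\mu)$ is a reproducing kernel Hilbert space (the $1$-variable analog of Lemma~\ref{r-kernel}) with reproducing kernel $\kappa_\mu(\cdot, \lambda)$ at $\lambda,$ one checks that $\mathscr{M}_w^* \kappa_\mu(\cdot, \lambda) = \overline{\lambda}\, \kappa_\mu(\cdot, \lambda);$ cyclicity of $\mathscr{M}_w$ with cyclic vector $1$ forces $\dim \ker(\mathscr{M}_w^* - \overline{\lambda}) \Le 1$ by \eqref{cyclic-kernel}, so $\ker(\mathscr{M}_w^* - \overline{\lambda}) = \C \cdot \kappa_\mu(\cdot, \lambda).$ Hence
\[
\ran(\mathscr{M}_w - \lambda) = \ker(\mathscr{M}_w^* - \overline{\lambda})^\perp = \{f \in \mathcal{D}(\mu) : f(\lambda) = 0\},
\]
which is precisely the division property: given $f \in \mathcal{D}(\mu)$ vanishing at $\lambda,$ there exists (a necessarily unique) $g \in \mathcal{D}(\mu)$ with $(w-\lambda)g = f,$ i.e., $f(z)/(z-\lambda) \in \mathcal{D}(\mu).$

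No substantial obstacle is anticipated; the argument essentially packages Richter's classical theorem with the same RKHS-and-cyclic-operator bookkeeping that already appears in Corollaries~\ref{cyclic-coro} and \ref{T-spectrum}. The only non-cosmetic step to be careful about is the invocation of Richter's $2$-isometry theorem in the $1$-variable setting, but it is available from \cite{R1991} as a direct special case of the same formula used to establish Corollary~\ref{cyclic-coro}.
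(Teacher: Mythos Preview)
Your proposal is correct and follows essentially the same route as the paper: both arguments show that $\mathscr M_w-\lambda$ has closed range, identify $\ker(\mathscr M_w^*-\overline{\lambda})$ with $\mathbb C\cdot\kappa_\mu(\cdot,\lambda)$, and conclude that $\operatorname{ran}(\mathscr M_w-\lambda)=\{f:f(\lambda)=0\}$. The only difference is cosmetic: the paper cites \cite[Corollary~3.8]{R1991} for the closed-range fact, whereas you derive it directly from the expansivity of $2$-isometries via the bound $\|(\mathscr M_w-\lambda)h\|\Ge(1-|\lambda|)\|h\|$.
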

\begin{proof} For $\lambda \in \mathbb D,$ let $g \in \mathcal D(\mu)$ be such that $g(\lambda)=0.$ Note that $g$ is orthogonal to $\kappa(\cdot, \lambda).$ Since $\ker(\mathscr M^*_z-\overline{\lambda})$ is spanned by 
$\kappa(\cdot, \lambda)$ and the range of $\mathscr M_z-\lambda$ is closed (see \cite[Corollary~3.8]{R1991}), there exists $f \in \mathcal D(\mu)$ such that $g=(z-\lambda)f,$ which completes the proof. 
\end{proof}
\begin{proof}[Proof of Theorem~\ref{D-mu-estimate}]  For $\lambda \in \mathbb D,$ 
assume that $(z_j-\lambda)h \in \mathcal D(\mu_1, \mu_2)$ for some $j=1, 2.$  Thus  
\beq \label{D-mu-finite-new} 
&& (z_j -\lambda)h \in H^2(\mathbb D^2) \\
&& D_{\mu_1, \mu_2}((z_j -\lambda)h) < \infty. \label{D-mu-finite} 
\eeq 
Since the arguments for the cases $j=1, 2$ are similar, we only treat the case when $j=1.$ 
It follows from Lemma~\ref{d-property} and \eqref{D-mu-finite-new} that $h \in H^2(\mathbb D^2).$
Applying \eqref{rmk-slice-theta} to \eqref{D-mu-finite} gives 
\beq \label{D-mu-member}
D_{\mu_1}( (z_1 -\lambda)h(\cdot, re^{i \theta})) < \infty, \quad r \in (0, 1),~ \theta \in \Omega_r,
\eeq
where $\Omega_r$ is a Lebesgue measurable subset of $[0, 2\pi]$ such that $[0, 2\pi] \backslash \Omega_r$ is of measure $0.$
For $r \in (0, 1)$ and $\theta \in \Omega_r,$ 
consider $f_{r, \theta} : \mathbb D \rar \mathbb C$ defined by 
\beqn
f_{r, \theta}(w)=(w-\lambda)h(w, re^{i \theta}), \quad w \in \mathbb D.
\eeqn
By \eqref{D-mu-finite-new} and Lemma~\ref{Hardy-slice}, $f_{r, \theta}$ belongs to $H^2(\mathbb D).$ Hence, 
by \eqref{D-mu-member}, $f_{r, \theta}$ belongs to $\mathcal D(\mu_1).$ Hence, by Lemma~\ref{D-for-D(mu)}, 
$h(\cdot, re^{i \theta}) \in \mathcal D(\mu_1).$ Since $$\|h(\cdot, re^{i\theta})\|_{\mathcal D(\mu_1)} \Le \|wh(\cdot, re^{i\theta})\|_{\mathcal D(\mu_1)}$$ (see \cite[Theorem~3.6]{R1991}), by the reverse triangle inequality,  
\beqn
\|f_{r, \theta}\|^2_{\mathcal D(\mu_1)} \Ge (1-|\lambda|)^2\|h(\cdot, re^{i \theta})\|^2_{\mathcal D(\mu_1)} \Ge (1-|\lambda|)^2 D_{\mu_1}(h(\cdot, re^{i \theta})).
\eeqn
Integrating both sides over $[0, 2\pi]$ yields
\beqn
(1-|\lambda|)^2\int_{0}^{2\pi} D_{\mu_1}(h(\cdot, re^{i \theta}))d\theta & \Le & \int_{0}^{2\pi}\|f_{r, \theta}\|^2_{H^2(\mathbb D)}d\theta +  \int_{0}^{2\pi}D_{\mu_1}(f_{r, \theta})d\theta. 
\\ 
& \Le & \|(z_1-\lambda)h\|^2_{H^2(\mathbb D^2)} + D_{\mu_1, \mu_2}((z_1 -\lambda)h),
\eeqn 
where we used \eqref{Hardy-fact}.
Taking supremum over $r \in (0, 1)$ gives now
\beqn
\sup_{0 < r < 1}\int_{0}^{2\pi} D_{\mu_1}(h(\cdot, re^{i \theta}))d\theta < \infty. 
\eeqn
Also, since $h \in H^2(\mathbb D^2),$ 
it now suffices to check that 
 \beq \label{suffices-h-1}
\sup_{0 < r < 1} \int_{0}^{2\pi} D_{\mu_2}(h(re^{i \theta}, \cdot))d\theta < \infty. 
\eeq
Note that by \eqref{D-mu-finite}, 
\beqn
\sup_{0 < r < 1} \int_{0}^{2\pi} D_{\mu_2}\Big(((z_1-\lambda)h)(re^{i \theta}, \cdot)\Big)d\theta < \infty. 
\eeqn
However, for any $s \in (|\lambda|, 1),$ 
\beqn
&& \sup_{0 < r < 1}  \int_{0}^{2\pi} D_{\mu_2}\Big(((z_1-\lambda)h)(re^{i \theta}, \cdot)\Big)d\theta \\
& \Ge & \int_{0}^{2\pi} D_{\mu_2}\Big(((z_1-\lambda)h)(se^{i \theta}, \cdot)\Big)d\theta \\
& \Ge & (s-|\lambda|)^2 \int_{0}^{2\pi} \int_{\mathbb D}  |\partial_2 h(se^{i \theta}, w)|^2 P_{\mu_2}(w)dA(w)d\theta.
\eeqn
Applying Lemma~\ref{increasing-lem} and letting $s \uparrow 1^{-}$ now yields \eqref{suffices-h-1}. 
\end{proof}

Before we present an application of Theorem~\ref{D-mu-estimate}, let us recall some facts from the multivariate spectral theory (see \cite{Cu1981, Cu1988, T1970}). Let $T=(T_1, T_2)$ be a commuting pair  on $\mathcal H$
and set $D_T(x)=(T_1x, T_2x),$ $x \in \mathcal H.$ Note that
\beq 
\label{column-o}
\mbox{if $D^*_TD_T$ is Fredholm, then $D_T$ has closed range.} 
\eeq 
Indeed, 
if $D^*_TD_T$ is Fredholm, then $D_T$ is left-Fredholm, and hence we obtain \eqref{column-o}. 
To define the Taylor spectrum, we consider the following complex:
\beq \label{K} K(T, \mathcal H) : \lbrace 0 \rbrace \xrightarrow{0} \mathcal H \xrightarrow{B_2} \mathcal H \oplus \mathcal H \xrightarrow{B_1} \mathcal H \xrightarrow{0} \lbrace 0 \rbrace, \eeq
where the {\it boundary maps} $B_1$ and $B_2$ are given by
\beqn B_2(h) :=  (T_2 h, -T_1 h), \quad B_1(h_1,h_2) :=  T_1 h_{1}+T_2 h_{2}.\eeqn
Note that $K(T, \mathcal H)$ is a complex, that is, $ B_1 \circ B_2 = 0.$
Let $H^k({T})$ denote the $k$-th cohomology group in $K({T},\mathcal{H}),$ $k=0, 1, 2.$ Note that $H^0({T}) = \ker(T)$ and $H^2({T}) = \ker T^*$.
Following \cite{T1970} (resp. \cite{Cu1981}), we say that $T$ is {\it Taylor-invertible} (resp. {\it Fredholm}) if $H^k({T})=\{0\}$ (resp. $\dim H^k({T})<\infty$) for $k=0, 1, 2.$  The {\it Taylor spectrum} $\sigma(T)$ and
the {\it essential spectrum} $\sigma_e(T)$ are given by 
\beqn
\sigma(T)&=&\{\lambda \in \mathbb C^2 : T-\lambda~\mbox{is not Taylor-invertible}\}, \\ \sigma_e(T)&=&\{\lambda \in \mathbb C^2 : T-\lambda~\mbox{is not Fredholm}\}.
\eeqn
The {\it Fredholm index} $\mbox{ind}({T})$ of a commuting $2$-tuple ${T}$ on $\mathcal H$ is the {\it Euler characteristic} of the Koszul complex $K({T}, \mathcal{H}),$ that is,  
\beq
\label{index} \mbox{ind}({T}):= \dim H^0({T}) - \dim H^1({T}) + \dim H^2({T}).
\eeq

As an application of the division property, we now show that we always have exactness at the middle stage of the Koszul complex of the multiplication $2$-tuple $\mathscr M_z$ on $\mathcal D(\mu_1, \mu_2).$ First a general fact. 
\begin{lemma} \label{weak-strong-Gleason}
Let $\mathscr H$ be a reproducing kernel Hilbert space of complex-valued holomorphic functions on the unit bidisc $\mathbb D^2.$ Assume that $\mathscr M_z=(\mathscr M_{z_1}, \mathscr M_{z_2})$ is a commuting pair on $\mathscr H.$ If $\mathscr H$ has the division property, then for every $\lambda = (\lambda_1, \lambda_2) \in \mathbb D^2,$ the Koszul complex of $\mathscr M_z-\lambda = (\mathscr M_{z_1}-\lambda_1, \mathscr M_{z_2}-\lambda_2)$ is exact at the middle stage $($see \eqref{K}$).$ 
\end{lemma}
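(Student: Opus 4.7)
The plan is to establish the nontrivial inclusion $\ker B_1 \subseteq \operatorname{Im} B_2$, since $B_1 \circ B_2 = 0$ gives the reverse inclusion automatically. Fix $\lambda = (\lambda_1, \lambda_2) \in \mathbb{D}^2$ and take $(h_1, h_2) \in \mathscr{H} \oplus \mathscr{H}$ with
\begin{equation*}
(z_1 - \lambda_1) h_1 + (z_2 - \lambda_2) h_2 = 0.
\end{equation*}
The goal is to exhibit $h \in \mathscr{H}$ such that $(z_2 - \lambda_2) h = h_1$ and $-(z_1 - \lambda_1) h = h_2$, i.e. $B_2(h) = (h_1, h_2)$.

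The first step is to extract vanishing information from the algebraic identity above. Rearranging gives $(z_1 - \lambda_1) h_1 = -(z_2 - \lambda_2) h_2$ on $\mathbb{D}^2$. Restricting to the slice $\{z_2 = \lambda_2\}$ kills the right-hand side, so $(z_1 - \lambda_1) h_1(z_1, \lambda_2) \equiv 0$ as a function of $z_1$. Dividing by $z_1 - \lambda_1$ off the single point $z_1 = \lambda_1$ and invoking continuity (holomorphicity in $z_1$) yields $h_1(z_1, \lambda_2) = 0$ for every $z_1 \in \mathbb{D}$, so the slice $\{z_2 = \lambda_2\} \subseteq Z(h_1)$. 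Symmetrically, $\{z_1 = \lambda_1\} \subseteq Z(h_2)$.

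The second step is to invoke the division property. Since $h_1 \in \mathscr{H}$ vanishes on $\{z_2 = \lambda_2\}$, the $2$-division property produces a function
\begin{equation*}
h(z) := \frac{h_1(z)}{z_2 - \lambda_2} \in \mathscr{H}.
\end{equation*}
By construction $(\mathscr{M}_{z_2} - \lambda_2) h = h_1$. It remains to verify the second component: one needs $-(z_1 - \lambda_1) h = h_2$. Multiplying, this amounts to $-(z_1 - \lambda_1) h_1 = (z_2 - \lambda_2) h_2$, which is precisely the hypothesis. (Equivalently, one could instead use the $1$-division property on $h_2$ to define $\widetilde h = -h_2/(z_1 - \lambda_1) \in \mathscr{H}$ and then check $h = \widetilde h$ via the cancellation property, but going through $h_1$ directly is slightly cleaner.) Thus $B_2(h) = ((\mathscr{M}_{z_2} - \lambda_2) h, -(\mathscr{M}_{z_1} - \lambda_1) h) = (h_1, h_2)$, as required.

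There is no serious obstacle here: the argument is driven entirely by the observation that the Koszul relation forces each component to vanish on the complementary coordinate slice, after which the division property of $\mathscr{H}$ does all the work. The only subtlety worth flagging is the brief use of holomorphicity to pass from $h_1 \equiv 0$ off $\{z_1 = \lambda_1\} \cap \{z_2 = \lambda_2\}$ in the slice to identical vanishing on the whole slice; this is harmless because $\mathscr{H}$ consists of holomorphic functions on $\mathbb{D}^2$.
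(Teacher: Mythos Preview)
Your proof is correct and follows essentially the same approach as the paper's: restrict the Koszul relation to the slice $\{z_2=\lambda_2\}$ to force $h_1$ to vanish there, apply the division property to produce $h=h_1/(z_2-\lambda_2)\in\mathscr H$, and then cancel $(z_2-\lambda_2)$ from the original identity to recover $h_2=-(z_1-\lambda_1)h$. The paper first reformulates the division property as ``$(z_j-\lambda_j)h\in\mathscr H\Rightarrow h\in\mathscr H$'' before running the same argument, but the content is identical.
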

\begin{proof} 
Note that $\mathscr H$ has the division property if and only if 
for $j=1, 2,$ we have the following property:
\beq \label{w-Gleason-property}
&&\mbox{for any holomorphic function
$h : \mathbb D^2 \rar \mathbb C$ and 
$\lambda \in \mathbb D^2,$} \notag \\
&&\mbox{if $(z_j-\lambda_j)h \in \mathscr H,$  then $h \in \mathscr H.$} 
\eeq
We first assume that \eqref{w-Gleason-property} holds for $j=2$. 
To see that the Koszul complex of $\mathscr M_z-\lambda$ is exact at the middle stage, let $g, h \in \mathscr H$ be such that 
\beq \label{exactness-a-new}
(z_2-\lambda_2)g(z_1, z_2)=(z_1 - \lambda_1)h(z_1, z_2), \quad (z_1, z_2) \in \mathbb D^2.
\eeq
Letting $z_2=\lambda_2,$ we obtain $(w-\lambda_1)h(w, \lambda_2)=0$ for every $w \in \mathbb D.$ It follows that $h(\cdot, \lambda_2)=0$ on $\mathbb D.$ Since $h : \mathbb D^2 \rar \mathbb C$ is holomorphic, there exists a holomorphic function $k : \mathbb D^2 \rar \mathbb C$ such that  
\beq \label{exactness-b-new}
h(z_1, z_2)=(z_2 -\lambda_2)k(z_1, z_2), \quad (z_1, z_2) \in \mathbb D^2
\eeq
(in case of $\lambda_2=0,$ this can be seen using the power series for $h$; the general case can be dealt now by replacing $h(z_1, z_2)$ by $h(z_1, \varphi(z_2)),$ where $\varphi$ is the automorphism of $\mathbb D$ which takes $\lambda_2$ to $0$).
Since $h \in \mathscr H,$ by \eqref{w-Gleason-property}, $k \in \mathscr H.$ We now combine \eqref{exactness-a-new} with \eqref{exactness-b-new}  to obtain 
\beqn
(z_2-\lambda_2)g(z_1, z_2)&=&(z_1 - \lambda_1)h(z_1, z_2) \\
&=& (z_1 - \lambda_1)(z_2 -\lambda_2)k(z_1, z_2), \quad z \in \mathbb D^2.
\eeqn
This gives $g(z_1, z_2)=(z_1-\lambda_1) k(z_1, z_2),$ $z \in \mathbb D^2.$
This together with \eqref{exactness-b-new} shows that $\mathscr M_z-\lambda$ is exact at the middle stage. To complete the proof, we obtain the same conclusion in case \eqref{w-Gleason-property} holds for $j=1$. Indeed, one may proceed as above with the only change that the roles of $\lambda_1$ and $\lambda_2$ are interchanged (e.g. \eqref{exactness-a-new} is evaluated at $z_1=\lambda_1$).  
\end{proof}
\begin{remark} 
Let $\Omega$ be a bounded domain. 
One may imitate the first part of the proof of Lemma~\ref{d-property} to show that there exists a holomorphic function $k : \Omega \rar \mathbb C$ satisfying \eqref{exactness-b-new}. This gives an analog of Lemma~\ref{weak-strong-Gleason} for arbitrary bounded domains.
\end{remark}

The following is a consequence of Theorem~\ref{D-mu-estimate} and Lemma~\ref{weak-strong-Gleason}.
\begin{corollary} \label{exact-middle}
For every $\lambda = (\lambda_1, \lambda_2) \in \mathbb D^2,$ the Koszul complex of the $2$-tuple $\mathscr M_z-\lambda = (\mathscr M_{z_1}-\lambda_1, \mathscr M_{z_2}-\lambda_2)$ on $\mathcal D(\mu_1, \mu_2)$ is exact at the middle stage $($see \eqref{K}$).$
\end{corollary}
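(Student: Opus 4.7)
The plan is to verify that $\mathcal D(\mu_1, \mu_2)$ satisfies all hypotheses of Lemma~\ref{weak-strong-Gleason} and then invoke that lemma directly. By Lemma~\ref{r-kernel}, $\mathcal D(\mu_1, \mu_2)$ is a reproducing kernel Hilbert space, and by its very definition it consists of complex-valued holomorphic functions on $\mathbb D^2$. The commuting pair $\mathscr M_z = (\mathscr M_{z_1}, \mathscr M_{z_2})$ is a bounded operator pair on $\mathcal D(\mu_1, \mu_2)$ by Theorem~\ref{thm1}(i) together with the closed graph theorem (this observation was already recorded in the proof of Corollary~\ref{cyclic-coro}). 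Thus the first two hypotheses of Lemma~\ref{weak-strong-Gleason} are in place.

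Next I would invoke Theorem~\ref{D-mu-estimate} to assert that $\mathcal D(\mu_1, \mu_2)$ has the division property. This is precisely the remaining hypothesis of Lemma~\ref{weak-strong-Gleason}.

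The conclusion then follows immediately from Lemma~\ref{weak-strong-Gleason}: for every $\lambda = (\lambda_1, \lambda_2) \in \mathbb D^2$, the Koszul complex associated with $\mathscr M_z - \lambda$ is exact at the middle stage, which is exactly the statement of the corollary. There is no real obstacle here; the work has been front-loaded into Theorem~\ref{D-mu-estimate} (the division property) and into the algebraic manipulation performed in the proof of Lemma~\ref{weak-strong-Gleason} (where one evaluates along the slice $z_j = \lambda_j$, uses Hartogs-type reasoning to factor out $(z_j - \lambda_j)$, and then applies the division property to put the resulting holomorphic factor back into the space). Consequently, the proof of the corollary reduces to citing these two results in sequence.
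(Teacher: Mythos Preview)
Your proposal is correct and follows essentially the same approach as the paper: the paper simply states that the corollary is a consequence of Theorem~\ref{D-mu-estimate} and Lemma~\ref{weak-strong-Gleason}, and you have spelled out precisely how the hypotheses of Lemma~\ref{weak-strong-Gleason} are verified.
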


\section{Proof of Theorem~\ref{Gleason-new} and its consequences}

We begin with a
 lemma, which is a variant of \cite[Lemma~4.14]{GM2020}. We include its proof for the sake of completeness.  
\begin{lemma} \label{abstract-Gleason-p}
For a domain $\Omega$ of $\mathbb C^2,$ 
let $\mathscr H$ be the reproducing kernel Hilbert space of complex-valued holomorphic functions associated with the  kernel $\kappa : \Omega \times \Omega \rar \mathbb C.$ Assume that the constant function $1$ belongs to $\mathscr H,$ the multiplication operators $\mathscr M_{z_1}, \mathscr M_{z_2}$ are bounded on $\mathscr H$ and the commuting $2$-tuple $\mathscr M_z$ is cyclic. For $w \in \Omega,$  Gleason's problem can be solved for $\mathscr H$ over $\{w\}$ if and only if
\beq \label{CRT}
\mbox{$D^*_{\mathscr M^*_z-\overline{w}}$ has closed range.}
\eeq
In particular,  Gleason's problem can be solved for $\mathscr H$ over 
$\Omega \backslash \sigma_e(\mathscr M_z).$
\end{lemma}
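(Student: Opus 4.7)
The plan is to translate both sides of the asserted equivalence into statements about the subspace $R_w := (\mathscr M_{z_1}-w_1)\mathscr H + (\mathscr M_{z_2}-w_2)\mathscr H$ and then compare it with its closure. More precisely, I will show that $\overline{R_w}=\{f \in \mathscr H : f(w)=0\}$ holds unconditionally, so that solvability of Gleason's problem at $w$ becomes the assertion $R_w=\overline{R_w}$, which is the same as $D^*_{\mathscr M^*_z-\overline{w}}$ having closed range.

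First I would identify $\ker D_{\mathscr M^*_z-\overline{w}}=\ker(\mathscr M^*_z-\overline{w})$ with the one-dimensional space $\mathbb C\,\kappa(\cdot,w)$. The reproducing property yields
\[
\langle f,\mathscr M^*_{z_j}\kappa(\cdot,w)\rangle_{\mathscr H}=\langle \mathscr M_{z_j}f,\kappa(\cdot,w)\rangle_{\mathscr H}=w_j f(w)=\langle f,\overline{w_j}\kappa(\cdot,w)\rangle_{\mathscr H}
\]
for every $f\in \mathscr H$, so $\kappa(\cdot,w)\in \ker(\mathscr M^*_z-\overline{w})$; moreover $\kappa(\cdot,w)\ne 0$ because $\langle 1,\kappa(\cdot,w)\rangle_{\mathscr H}=1(w)=1$. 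By cyclicity of $\mathscr M_z$ and \eqref{cyclic-kernel}, $\dim \ker(\mathscr M^*_z-\overline{w})\le 1$, giving $\ker(\mathscr M^*_z-\overline{w})=\mathbb C\,\kappa(\cdot,w)$.

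Since $\mathrm{range}(D^*_{\mathscr M^*_z-\overline{w}})=R_w$, the standard Hilbert-space identity $\overline{\mathrm{range}(A^*)}=(\ker A)^\perp$, applied with $A=D_{\mathscr M^*_z-\overline{w}}$, produces $\overline{R_w}=(\mathbb C\,\kappa(\cdot,w))^\perp=\{f\in \mathscr H:f(w)=0\}$. Because $1\in \mathscr H$, the set $\{f-f(w)\cdot 1:f\in \mathscr H\}$ coincides with $\{h\in \mathscr H:h(w)=0\}$, and thus Gleason's problem over $\{w\}$ is solvable exactly when $\{h:h(w)=0\}\subseteq R_w$. Combined with the automatic reverse inclusion $R_w \subseteq \{h : h(w)=0\}$, this is equivalent to $R_w=\overline{R_w}$, i.e., to $D^*_{\mathscr M^*_z-\overline{w}}$ having closed range.

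For the ``in particular'' clause, let $w\in \Omega\setminus \sigma_e(\mathscr M_z)$, so that $\mathscr M_z-w$ is Taylor-Fredholm and every cohomology space of its Koszul complex \eqref{K} is finite-dimensional. A standard fact about Koszul complexes of commuting tuples on Hilbert space (see \cite{Cu1981}) ensures that all boundary maps, and in particular $B_1=D^*_{\mathscr M^*_z-\overline{w}}$, have closed range, so the first half of the lemma delivers the conclusion. The least automatic step is this passage from Taylor-Fredholmness to closed range of the boundary maps; the kernel computation and the subsequent duality are forced by cyclicity, the presence of $1$ in $\mathscr H$, and the reproducing property.
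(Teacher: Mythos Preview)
Your proof is correct and follows essentially the same line as the paper's: both identify $\ker(\mathscr M^*_z-\overline{w})$ with $\mathbb C\,\kappa(\cdot,w)$ via cyclicity and the reproducing property, use $\overline{\mathrm{ran}\,D^*_{\mathscr M^*_z-\overline{w}}}=(\ker D_{\mathscr M^*_z-\overline{w}})^\perp=\{f:f(w)=0\}$, and then read off the equivalence between solvability of Gleason's problem and closedness of the range. For the ``in particular'' clause the paper is slightly more explicit---it invokes \cite[Corollary~3.6]{Cu1981} to show $D^*_{\mathscr M^*_z-\overline{w}}D_{\mathscr M^*_z-\overline{w}}$ is Fredholm, then uses \eqref{column-o} and the closed range theorem---whereas you appeal directly to the (equivalent) fact that Fredholmness of the Koszul complex forces all boundary maps to have closed range; both are valid and rest on the same source.
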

\begin{proof} 
Let $w \in \Omega$ and let $f \in \mathscr H.$ By the reproducing kernel property of $\mathscr H,$ \beq \label{f-minus-f(w)}
f-f(w) \in \{c\kappa(\cdot, w) :  c \in \mathbb C\}^{\perp}.
\eeq 
However, since $\mathscr M_z$ is cyclic, $\dim \ker(\mathscr M^*_z-\overline{w}) \Le 1$ for every $w \in \mathbb C^2$ (see \eqref{cyclic-kernel}). As $1 \in \mathscr H,$ we have $\kappa(\cdot, w) \neq 0,$ and hence 
\beqn 
\{c\kappa(\cdot, w) :  c \in \mathbb C\} = \ker(\mathscr M^*_z-\overline{w})=\ker D_{\mathscr M^*_z-\overline{w}}.
\eeqn
It now follows from \eqref{f-minus-f(w)} that  
\beq 
\label{eq2}
 f-f(w) \in (\ker D_{\mathscr M^*_z-\overline{w}})^{\perp}=\overline{\mbox{ran}(D^*_{\mathscr M^*_z-\overline{w}})}.
\eeq
Also, it is easy to see that
\beq
\label{eq3}
\mbox{ran}(D^*_{\mathscr M^*_z-\overline{w}})  
= \{(z_1-w_1)g_1 + (z_2-w_2)g_2 : g_1, g_2 \in \mathscr H\},
\eeq
If \eqref{CRT} holds, then it now follows from \eqref{eq2} that 
\beqn 
 f-f(w) \in \{(z_1-w_1)g_1 + (z_2-w_2)g_2 : g_1, g_2 \in \mathscr H\},
\eeqn
and hence Gleason's problem can be solved for $\mathscr H$ over $\{w\}.$ Conversely, if Gleason's problem can be solved for $\mathscr H$ over $\{w\},$ then by \eqref{eq2}, any function in $\overline{\mbox{ran}(D^*_{\mathscr M^*_z-\overline{w}})}$ is of the form $f-f(w)$ for some $f \in \mathscr H,$ and hence by \eqref{eq3}, it belongs to $\mbox{ran}(D^*_{\mathscr M^*_z-\overline{w}}).$
This completes the proof of the equivalence. 

To see the remaining part, let $w = (w_1, w_2) \in \Omega \backslash  \sigma_e(\mathscr M_z).$ 
Since $D^*_SD_S = S^*_1S_1 + S^*_2S_2$ for any commuting pair $S=(S_1, S_2),$ 
by \cite[Corollary~3.6]{Cu1981}, the operator $D^*_{\mathscr M^*_z-\overline{w}}D_{\mathscr M^*_z-\overline{w}}$ is Fredholm, and hence by \eqref{column-o}, 
$D_{\mathscr M^*_z-\overline{w}}$ has closed range. 
Hence, by the closed-range theorem (see \cite[Theorem~VI.1.10]{Co1990}), we obtain \eqref{CRT} completing the proof.
\end{proof}
\begin{remark} \label{rmk-abstract-G}
Let  $\mathscr M_z$ be the multiplication $2$-tuple on the Hardy space $H^2(\mathbb D^2)$ of the unit bidisc $\mathbb D^2.$ Since $\sigma_e(\mathscr M_z) \cap \mathbb D^2 = \emptyset$ (see \cite[Theorem~5(c)]{Cu1981}), by Lemma~\ref{abstract-Gleason-p},  Gleason's problem can be solved for $H^2(\mathbb D^2).$ 
\end{remark}

The following lemma provides a situation in which the division property ensures a solution to Gleason's problem. 
\begin{lemma}
\label{division-slice}
Let $\mathscr H$ be a reproducing kernel Hilbert space of complex-valued holomorphic functions on the unit bidisc $\mathbb D^2$ and let $w=(w_1, w_2) \in \mathbb D^2.$ Assume that $\mathscr H$ has the division property and $\mathscr M_z=(\mathscr M_{z_1}, \mathscr M_{z_2})$ is a commuting pair on $\mathscr H.$ If, for every $f \in \mathscr H,$ either $f(\cdot, w_2)$ or $f(w_1, \cdot)$ belongs to $\mathscr H,$ 
then 
Gleason's problem can be solved for $\mathscr H$ over $\{w\}.$
\end{lemma}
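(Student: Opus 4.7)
The plan is to write $f(z) - f(w) = (z_1 - w_1)g_1(z) + (z_2 - w_2)g_2(z)$ for suitable $g_1, g_2 \in \mathscr H$, by peeling off one coordinate at a time. By the hypothesis, at least one of $f(\cdot, w_2)$ or $f(w_1, \cdot)$ belongs to $\mathscr H$; the two cases are symmetric in the variables $z_1$ and $z_2$, so I will work in the case where $\tilde f(z) := f(w_1, z_2)$ lies in $\mathscr H$.

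For the first coordinate, the difference $f - \tilde f$ lies in $\mathscr H$ (since both summands do) and vanishes identically on $\{z \in \mathbb D^2 : z_1 = w_1\}$. The $1$-division property of $\mathscr H$ then immediately yields $g_1 := (f - \tilde f)/(z_1 - w_1) \in \mathscr H$, so that $f = \tilde f + (z_1 - w_1) g_1$. This reduces the problem to producing $g_2 \in \mathscr H$ with $\tilde f - f(w) = (z_2 - w_2) g_2$.

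For the second coordinate, I apply the hypothesis a second time, now to $\tilde f \in \mathscr H$. Since $\tilde f$ is constant in $z_1$, the slice $\tilde f(w_1, \cdot)$ is $\tilde f$ itself (already in $\mathscr H$), while the slice $\tilde f(\cdot, w_2)$ is the constant function $f(w_1, w_2) = f(w)$. The informative alternative furnished by the hypothesis is therefore that the constant function $f(w) \cdot 1$ belongs to $\mathscr H$. Then $\tilde f - f(w) \in \mathscr H$ vanishes on $\{z_2 = w_2\}$, and the $2$-division property delivers $g_2 := (\tilde f - f(w))/(z_2 - w_2) \in \mathscr H$, completing the decomposition.

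The main delicate point is the second step: one must extract from the disjunctive hypothesis the \emph{nontrivial} alternative $f(w) \cdot 1 \in \mathscr H$, because for $\tilde f$ the other alternative is automatic and hence uninformative. In the intended application $\mathscr H = \mathcal D(\mu_1, \mu_2)$, the constant $1$ already lies in $\mathscr H$ by Lemma~\ref{r-kernel}, so this subtlety does not arise and the argument proceeds cleanly.
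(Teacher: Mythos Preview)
Your approach mirrors the paper's exactly: both assume (say) $\tilde f := f(w_1,\cdot)\in\mathscr H$, use the division property to factor $f-\tilde f=(z_1-w_1)g_1$ with $g_1\in\mathscr H$, and then seek to factor $\tilde f - f(w)=(z_2-w_2)g_2$. The paper dispatches the second step with ``one may argue as above,'' while you spell it out.

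There is, however, a genuine logical gap in your second step. Applying the disjunctive hypothesis to $\tilde f$ yields ``$\tilde f(w_1,\cdot)\in\mathscr H$ \emph{or} $\tilde f(\cdot,w_2)\in\mathscr H$''; since the first disjunct is $\tilde f$ itself and hence automatically true, the disjunction tells you \emph{nothing} about the second disjunct. You cannot conclude from it that the constant function $f(w)\cdot 1$ lies in $\mathscr H$. You flag this yourself in the final paragraph, but flagging it is not resolving it: as written, the argument does not prove the lemma in the stated generality. (The paper's ``argue as above'' hides the same issue.) Your observation that $1\in\mathcal D(\mu_1,\mu_2)$ is correct and does rescue the intended application; a clean fix at the level of the lemma is simply to add the hypothesis $1\in\mathscr H$, after which $\tilde f - f(w)\in\mathscr H$ directly and the division property finishes the job without any second appeal to the slice hypothesis.
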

\begin{proof} For $f \in \mathscr H,$ assume that $f(w_1, \cdot) \in \mathscr H.$  
Thus $f-f(w_1, \cdot)\in \mathscr H.$ Hence, if 
$h : \mathbb D^2 \rar \mathbb C$ is a holomorphic function such that 
\beq 
\label{observation-h}
f(z_1, z_2)-f(w_1, z_2)=(z_1-w_1)h(z_1, z_2), \quad z_1, z_2 \in \mathbb D,
\eeq
by the division property for $\mathscr H,$ we have $h \in \mathscr H.$
Also, since $f(w_1, \cdot) \in \mathscr H,$ one may argue as above to see that there exists $k \in \mathscr H$ satisfying 
\beqn 
f(w_1, z_2)-f(w_1, w_2)=(z_2-w_2)k(z_1, z_2), \quad z_1, z_2 \in \mathbb D..
\eeqn
This, combined with \eqref{observation-h}, completes the proof in this case. Similarly, one can deal with the case in which $f(\cdot, w_2) \in \mathscr H.$   
\end{proof}


We also need the following fact of independent interest:
\begin{lemma} \label{contractive-h}
For every $f \in \mathcal D(\mu_1, \mu_2),$ the slice functions $f(\cdot, 0)$ and $f(0, \cdot)$ belong to $\mathcal D(\mu_1, \mu_2).$ Moreover, the mappings $f \mapsto f(\cdot, 0)$ and $f \mapsto f(0, \cdot)$ from $\mathcal D(\mu_1, \mu_2)$ into itself are contractive homomorphisms. 
\end{lemma}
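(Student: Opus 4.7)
\smallskip

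\noindent\textbf{Proof plan.} Fix $f\in\mathcal D(\mu_1,\mu_2)$ and consider $g(z_1,z_2):=f(z_1,0)$, which is holomorphic on $\mathbb D^2$ and depends only on $z_1$; an identical argument with the two coordinates swapped will handle $f\mapsto f(0,\cdot)$. The plan is to verify membership and contractivity by bounding the Hardy and Dirichlet-integral contributions to $\|g\|_{\mathcal D(\mu_1,\mu_2)}$ separately, then to derive the homomorphism property from polynomial density.

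For the Hardy part, note that by Lemma~\ref{Rmk-Hardy-inclusion} we have $f\in H^2(\mathbb D^2)$, and since $g$ is independent of $z_2$, its Fourier coefficients are $\hat g(m,n)=\hat f(m,0)\,\delta(n,0)$. Thus \eqref{Hardy-norm-new-0} immediately gives
\[
\|g\|^2_{H^2(\mathbb D^2)}=\sum_{m\Ge 0}|\hat f(m,0)|^2\Le \sum_{(m,n)\in\mathbb Z_+^2}|\hat f(m,n)|^2=\|f\|^2_{H^2(\mathbb D^2)}.
\]
(Alternatively, one may invoke subharmonicity of $|f(z_1,\cdot)|^2$ at $z_2=0$ together with \eqref{Hardy-norm-new}.)

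For the Dirichlet-integral part, observe that $\partial_2 g\equiv 0$, so the second summand defining $D_{\mu_1,\mu_2}(g)$ vanishes. Moreover, $\partial_1 g(z_1,re^{i\theta})=\partial_1 f(z_1,0)$ is independent of $\theta$ and $r$, so that using the normalization of $d\theta$,
\[
D_{\mu_1,\mu_2}(g)=\int_{\mathbb D}|\partial_1 f(z_1,0)|^2\,P_{\mu_1}(z_1)\,dA(z_1).
\]
For each fixed $z_1\in\mathbb D$, the function $z_2\mapsto\partial_1 f(z_1,z_2)$ is holomorphic on $\mathbb D$, so $|\partial_1 f(z_1,\cdot)|^2$ is subharmonic; applying the mean value inequality at $z_2=0$ yields
\[
|\partial_1 f(z_1,0)|^2\Le \int_{\mathbb T}|\partial_1 f(z_1,re^{i\theta})|^2\,d\theta,\qquad r\in(0,1).
\]
Integrating against the non-negative measure $P_{\mu_1}(z_1)\,dA(z_1)$ and applying Tonelli gives
\[
D_{\mu_1,\mu_2}(g)\Le \int_{\mathbb T}\int_{\mathbb D}|\partial_1 f(z_1,re^{i\theta})|^2\,P_{\mu_1}(z_1)\,dA(z_1)\,d\theta,
\]
and by Lemma~\ref{increasing-lem} the right-hand side is dominated by the first summand of $D_{\mu_1,\mu_2}(f)$, hence by $D_{\mu_1,\mu_2}(f)$ itself. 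Combining the two estimates shows $g\in\mathcal D(\mu_1,\mu_2)$ with $\|g\|_{\mathcal D(\mu_1,\mu_2)}\Le \|f\|_{\mathcal D(\mu_1,\mu_2)}$.

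Finally, denoting $R_1 f:=f(\cdot,0)$, the identity $R_1(pf)=R_1(p)\cdot R_1(f)$ holds trivially whenever $p,f$ are polynomials, and so in particular $R_1(pq)=R_1(p)R_1(q)$ for polynomials $p,q$; since $z_1,z_2$ are multipliers of $\mathcal D(\mu_1,\mu_2)$ (Theorem~\ref{thm1}(i)) and polynomials are dense in $\mathcal D(\mu_1,\mu_2)$ (Theorem~\ref{thm1}(ii)), the multiplicativity extends continuously via the contractivity already established, yielding the homomorphism claim. The only delicate point in the whole argument is the Fubini/monotonicity interchange producing the bound $D_{\mu_1,\mu_2}(g)\Le D_{\mu_1,\mu_2}(f)$, and this is handled cleanly by positivity of the integrand together with Lemma~\ref{increasing-lem}.
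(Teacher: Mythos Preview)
Your argument is correct and follows essentially the same line as the paper's proof: the paper simply asserts the displayed Dirichlet-integral inequality (which is exactly your subharmonicity/mean-value bound combined with Lemma~\ref{increasing-lem}) and cites the known fact that the slice maps on $H^2(\mathbb D^2)$ are contractive homomorphisms. Your density argument for the homomorphism property is unnecessary, since multiplicativity of $f\mapsto f(\cdot,0)$ is a pointwise tautology and requires no approximation.
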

\begin{proof}
If $f \in \mathcal D(\mu_1, \mu_2),$ then 
\beqn
&& \int_{\mathbb T} \int_{\mathbb D}|\partial_1 f(z_1, 0)|^2 P_{\mu_1}(z_1) \,dA(z_1)d\theta \\
&+&  \int_{\mathbb T} \int_{\mathbb D}|\partial_2 f(0, z_2)|^2 P_{\mu_2}(z_2) \,dA(z_2)d\theta  \Le  D_{\mu_1, \mu_2}(f).
\eeqn 
Since the mappings $f \mapsto f(\cdot, 0)$ and $f \mapsto f(0, \cdot)$ from $H^2(\mathbb D^2)$ into itself are contractive homomorphisms, the desired conclusions may be deduced from the estimate above. 
\end{proof}

\begin{proof}[Proof of Theorem~\ref{Gleason-new}] Since 
$\mathcal D(\mu_1, \mu_2)$ has the division property (see Theorem~\ref{D-mu-estimate}), by Lemmas~\ref{division-slice} and \ref{contractive-h}, Gleason's problem can be solved for $\mathscr H$ over $\{(0, 0)\}.$ Hence, by Lemma~\ref{abstract-Gleason-p} (which is applicable since $\mathscr M_z$ on $\mathcal D(\mu_1, \mu_2)$ is cyclic; see Theorem~\ref{thm1}), $D^*_{\mathscr M^*_z}$ has closed range (see \eqref{CRT}). It is now easy to see using Corollaries~\ref{T-spectrum} and \ref{exact-middle} that $\mathscr M_z$ is Fredholm. Since the essential spectrum is a closed subset of $\mathbb C^2$ not containing $(0, 0),$ there exists $r > 0$ such that 
$\mathbb D^2_r \subseteq \mathbb C^2 \backslash \sigma_e(\mathscr M_z).$ Another application of Lemma~\ref{abstract-Gleason-p} now completes the proof. 
%
\end{proof}
%

The following fact is implicit in the proof of Theorem~\ref{Gleason-new}. 
\begin{corollary} \label{Cowen-Douglas} 
The commuting $2$-tuple $\mathscr M^*_z$ on $\mathcal D(\mu_1, \mu_2)$ belongs to ${\bf B}_1(\mathbb D^2_r)$ for some $r >0.$ 
\end{corollary}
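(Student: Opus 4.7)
The plan is to verify the two defining conditions of the Cowen--Douglas class ${\bf B}_1(\mathbb D^2_r)$ for the tuple $\mathscr M^*_z$, namely:
\begin{enumerate}
\item[(a)] for every $\omega\in\mathbb D^2_r$, the operator $D_{\mathscr M^*_z-\omega}$ has closed range and $\dim\ker(\mathscr M^*_z-\omega)=1$;
\item[(b)] $\bigvee_{\omega\in\mathbb D^2_r}\ker(\mathscr M^*_z-\omega)=\mathcal D(\mu_1,\mu_2)$.
\end{enumerate}
The value of $r$ will be exactly the one produced at the end of the proof of Theorem~\ref{Gleason-new}: one chooses $r>0$ so that $\mathbb D^2_r\subseteq\mathbb C^2\setminus\sigma_e(\mathscr M_z)$. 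This is the key input, and it is already shown to exist there.

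For (a), the dimension statement is immediate from Corollary~\ref{T-spectrum}, which also identifies $\ker(\mathscr M^*_z-\omega)$ as the one-dimensional space $\mathbb C\,\kappa(\cdot,\overline{\omega})$. For the closed-range condition, I would argue as follows. Fix $\omega\in\mathbb D^2_r$; then $\overline{\omega}\in\mathbb D^2_r\subseteq\mathbb C^2\setminus\sigma_e(\mathscr M_z)$, so $\mathscr M_z-\overline{\omega}$ is Fredholm. In particular the boundary map $B_1$ of its Koszul complex (see \eqref{K}) has closed range. A direct computation shows that $B_1$ coincides with $D^*_{\mathscr M^*_z-\omega}$, since for $(y_1,y_2)\in\mathcal D(\mu_1,\mu_2)^{\oplus 2}$,
\[
D^*_{\mathscr M^*_z-\omega}(y_1,y_2)=(\mathscr M_{z_1}-\omega_1)y_1+(\mathscr M_{z_2}-\omega_2)y_2=B_1(y_1,y_2).
\]
Thus $D^*_{\mathscr M^*_z-\omega}$ has closed range, and hence by the closed-range theorem so does $D_{\mathscr M^*_z-\omega}$.

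For (b), I would combine Corollary~\ref{T-spectrum} with the spanning statement from Lemma~\ref{r-kernel}. Since $\ker(\mathscr M^*_z-\omega)=\mathbb C\,\kappa(\cdot,\overline{\omega})$ and $\mathbb D^2_r$ is invariant under complex conjugation, the subspace $\bigvee_{\omega\in\mathbb D^2_r}\ker(\mathscr M^*_z-\omega)$ equals $\bigvee\{\kappa(\cdot,w):w\in\mathbb D^2_r\}$, which by Lemma~\ref{r-kernel} is all of $\mathcal D(\mu_1,\mu_2)$.

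I do not foresee a serious obstacle here; the result is essentially a bookkeeping consequence of results already established. The mildly delicate point is the identification $B_1=D^*_{\mathscr M^*_z-\omega}$ combined with the passage from Fredholmness of $\mathscr M_z-\overline\omega$ to closed range of its boundary maps, but this is standard for Koszul complexes of commuting tuples on Hilbert spaces.
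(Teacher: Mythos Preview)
Your proof is correct and follows essentially the same route as the paper, which deduces the result from Theorem~\ref{Gleason-new}, Lemma~\ref{r-kernel}, Corollary~\ref{T-spectrum}, and Lemma~\ref{abstract-Gleason-p}; you unpack the closed-range step directly via the Koszul boundary map $B_1$, whereas the paper packages it through the equivalence \eqref{CRT} in Lemma~\ref{abstract-Gleason-p}. One minor slip: in your displayed formula the middle expression should read $(\mathscr M_{z_1}-\overline{\omega_1})y_1+(\mathscr M_{z_2}-\overline{\omega_2})y_2$, since the adjoint of $\mathscr M^*_{z_j}-\omega_j$ is $\mathscr M_{z_j}-\overline{\omega_j}$.
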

\begin{proof}
This may be deduced from Theorem~\ref{Gleason-new}, Lemma~\ref{r-kernel}, Corollary~\ref{T-spectrum} and Lemma~\ref{abstract-Gleason-p} (see \eqref{CRT}). 
\end{proof}

We conclude this section with the following corollary describing the cokernels of the multiplication operators $\mathscr M_{z_j},$ $j = 1, 2,$ on $\mathcal D(\mu_1, \mu_2).$ 
\begin{corollary} 
\label{description-kernel}
For $1 \Le i \neq j \Le 2,$ 
$\ker \mathscr M^*_{z_j} = \bigvee \big\{z^k_i : k \Ge 0\big\}.$ 
\end{corollary}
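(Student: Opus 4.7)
The plan is to prove the two inclusions separately, treating the case $j=1$, $i=2$; the other case is symmetric.

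The inclusion $\bigvee\{z_2^k : k \ge 0\} \subseteq \ker \mathscr M^*_{z_1}$ is already essentially recorded in the proof of Corollary~\ref{coro-e-n}, where the identity \eqref{inner-p-formula} was used to verify that $\mathscr M^*_{z_1} z_2^k = 0$ for every $k \ge 0$. Since $\ker \mathscr M^*_{z_1}$ is closed, this inclusion follows at once.

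For the reverse inclusion, given $f \in \ker \mathscr M^*_{z_1}$, the idea is to decompose
\[ f = f(0, \cdot) + \bigl(f - f(0, \cdot)\bigr) \]
and show that the first summand lies in $\bigvee\{z_2^k : k \ge 0\}$ while the second summand is $0$. For the first summand, I would argue as follows. By Theorem~\ref{thm1}(ii) there is a sequence of polynomials $p_n$ converging to $f$ in $\mathcal D(\mu_1, \mu_2)$; by Lemma~\ref{contractive-h}, the slice map $g \mapsto g(0, \cdot)$ is a bounded operator on $\mathcal D(\mu_1, \mu_2)$, so $p_n(0, \cdot) \to f(0, \cdot)$ in norm. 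Since each $p_n(0, \cdot)$ is a polynomial in $z_2$ alone, this yields $f(0, \cdot) \in \bigvee\{z_2^k : k \ge 0\}$, and in particular $f(0, \cdot) \in \ker \mathscr M^*_{z_1}$ by the first inclusion.

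For the second summand, observe that $f - f(0, \cdot)$ vanishes identically on the slice $\{z_1 = 0\}$. The division property (Theorem~\ref{D-mu-estimate}), applied with $\lambda = (0, 0)$ and with $j = 1$, then produces $\tilde g \in \mathcal D(\mu_1, \mu_2)$ such that $f - f(0, \cdot) = z_1 \tilde g$. Applying $\mathscr M^*_{z_1}$ and using that both $f$ and $f(0, \cdot)$ lie in $\ker \mathscr M^*_{z_1}$, we obtain $\mathscr M^*_{z_1} \mathscr M_{z_1} \tilde g = 0$. Finally, since $\mathscr M_{z_1}$ is a $2$-isometry (Corollary~\ref{cyclic-coro}), it is expansive, i.e.\ $\mathscr M^*_{z_1} \mathscr M_{z_1} \ge I$; pairing $\mathscr M^*_{z_1} \mathscr M_{z_1} \tilde g = 0$ with $\tilde g$ forces $\|\mathscr M_{z_1} \tilde g\|^2 = 0$, whence $\tilde g = 0$ and $f = f(0, \cdot)$, completing the proof.

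There is no serious obstacle here, since all of the machinery is already in place; the main conceptual point is simply to realize that the division property is exactly the tool needed to peel off a multiple of $z_1$ from $f - f(0, \cdot)$, after which expansivity of the $2$-isometry $\mathscr M_{z_1}$ finishes the argument. The only minor step that requires a bit of care is justifying $f(0, \cdot) \in \bigvee\{z_2^k\}$, which rests on combining the polynomial density of Theorem~\ref{thm1}(ii) with the boundedness of the slice map from Lemma~\ref{contractive-h}.
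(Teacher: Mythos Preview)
Your proof is correct and follows essentially the same route as the paper's own argument: both establish the easy inclusion via \eqref{inner-p-formula}, then use polynomial density together with the contractive slice map (Lemma~\ref{contractive-h}) to place $f(0,\cdot)$ in $\bigvee\{z_2^k\}$, invoke the division property (Theorem~\ref{D-mu-estimate}) to write $f-f(0,\cdot)=z_1\tilde g$ with $\tilde g\in\mathcal D(\mu_1,\mu_2)$, and finish by noting that $\mathscr M^*_{z_1}\mathscr M_{z_1}\tilde g=0$ forces $\tilde g=0$. The only cosmetic difference is that the paper phrases the last step as ``$\mathscr M^*_{z_1}\mathscr M_{z_1}$ is invertible,'' whereas you argue via expansivity and a pairing; these are equivalent.
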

\begin{proof}
As observed in the proof of Corollary~\ref{coro-e-n},  
\beq \label{kernel-inclusion}
\{p(z_i) : p \in \mathbb C[w]\} \subseteq \ker \mathscr M^*_{z_j}, \quad 1 \Le i \neq j \Le 2.
\eeq
To see the reverse inclusion, let 
$f \in \ker \mathscr M^*_{z_1}.$ 
By Theorem~\ref{thm1}, there exists a sequence $\{p_n\}_{n \Ge 1}$ of complex polynomials in $z_1, z_2$ converging to $f.$ 
By Lemma ~\ref{contractive-h}, $f(0, \cdot) \in \mathcal D(\mu_1, \mu_2),$ and $\{p_n(0, \cdot)\}_{n \Ge 1}$ converges to $f(0, \cdot).$ Hence, by \eqref{kernel-inclusion}, 
$f(0, \cdot) \in \ker \mathscr M^*_{z_1}.$ Thus $f-f(0, \cdot) \in \ker \mathscr M^*_{z_1}.$ However, there exists a holomorphic function $h : \mathbb D^2 \rar \mathbb C$ such that 
\beq 
\label{existence-h-new}
f(z_1, z_2)-f(0, z_2)=z_1h(z_1, z_2), \quad z_1, z_2 \in \mathbb D. 
\eeq
By Theorem~\ref{D-mu-estimate}, 
we have $h \in \mathcal D(\mu_1, \mu_2).$ It now follows from \eqref{existence-h-new} that $\mathscr M_{z_1}h \in \ker \mathscr M^*_{z_1}.$ Since $\mathscr M^*_{z_1}\mathscr M_{z_1}$ is invertible, we must have $h=0,$ and hence, by \eqref{existence-h-new},  $f=f(0, \cdot),$ or equivalently, $f$ belongs to the closure of $\{p(z_2) : p \in \mathbb C[w]\}.$ Similarly, one can check that $\ker \mathscr M^*_{z_2}$ is equal to the closure of $\{p(z_1) : p \in \mathbb C[w]\}.$ 
\end{proof}

\section{Proof of Theorem~\ref{model-thm} and its consequences} \label{S4}

The proof of Theorem~\ref{model-thm} relies on revealing the structure of toral $2$-isometries $T$ with $\ker T^*$ as a wandering subspace (see Definition~\ref{def-ws}). 
\begin{lemma} \label{lemma-inner-formula}
Let $T=(T_1, T_2)$ be a toral $2$-isometry. Then the following statements are true:
\begin{enumerate}[\rm(i)]
\item for any integers $k, l \Ge 0,$
\beq \label{formula-moment} 
T^{*k}_1T^{*l}_2T^{l}_2T^{k}_1 &=& T^{*k}_1T^{k}_1 + T^{*l}_2T^{l}_2 - I \\ \notag
&=& kT^*_1T_1+ lT^*_2T_2 -(k+l-1)I,
\eeq
\item for $f_0 \in \ker T^*,$ assume that 
\beq \label{assumption-k-condition-1} 
&& \inp{T^m_1f_0}{T^p_1 T^q_2f_0}=0, \quad q \Ge 1, m, p \Ge 0, \\ && \inp{T^n_2f_0}{T^p_1 T^q_2f_0}=0, \quad p \Ge 1, n, q \Ge 0. \label{assumption-k-condition-2}
\eeq
Then we have the following$:$
\beqn
\inp{T^{m}_1T^n_2f_0}{T^{p}_1T^q_2f_0} = \begin{cases}
0 & \mbox{if}~ m\neq p, ~n \neq q, \\
\inp{T^n_2f_0}{T^q_2f_0} & \mbox{if}~ m=p, ~n \neq q, \\
\inp{T^{m}_1f_0}{T^{p}_1f_0} & \mbox{if}~ m \neq p, ~n = q, \\
\|T^{m}_1f_0\|^2 + \|T^n_2f_0\|^2 - \|f_0\|^2 & \mbox{if}~ m= p, ~n = q.
\end{cases}
\eeqn 
\end{enumerate}
\end{lemma}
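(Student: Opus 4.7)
The plan is to prove part (i) by two rounds of induction using \eqref{C1}, and then derive part (ii) by polarizing the operator identity of part (i) against $f_0$ and systematically invoking the orthogonality hypotheses \eqref{assumption-k-condition-1}--\eqref{assumption-k-condition-2} to eliminate or consolidate terms.

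For part (i), I would first extract the single-variable $2$-isometry relation $T^{*2}_i T^2_i = 2 T^*_i T_i - I$ from the $i = j$ case of \eqref{C1} and prove by induction on $k$ that $T^{*k}_i T^k_i = k T^*_i T_i - (k-1) I$ for $i = 1, 2$. Next, the mixed identity
\begin{equation*}
T^{*k}_1 T^*_2 T_2 T^k_1 \;=\; k\, T^*_1 T_1 + T^*_2 T_2 - k\, I
\end{equation*}
follows by induction on $k$: the base case $k = 1$ comes from the $i = 1, j = 2$ case of \eqref{C1} (using $T^*_1 T^*_2 = T^*_2 T^*_1$), and the inductive step writes $T^{*(k+1)}_1 T^*_2 T_2 T^{k+1}_1 = T^*_1 (k T^*_1 T_1 + T^*_2 T_2 - k I) T_1$ and expands using both $T^{*2}_1 T^2_1 = 2 T^*_1 T_1 - I$ and the $k = 1$ mixed identity. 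Substituting $T^{*l}_2 T^l_2 = l T^*_2 T_2 - (l-1) I$ into $T^{*k}_1 T^{*l}_2 T^l_2 T^k_1 = T^{*k}_1 (l T^*_2 T_2 - (l-1) I) T^k_1$ and simplifying via the mixed identity and the single-variable formula for $T^{*k}_1 T^k_1$ yields both equalities in \eqref{formula-moment}.

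For part (ii), the key is that part (i) is an operator identity, so pairing it with arbitrary $h, g \in \mathcal H$ produces the polarized formula
\begin{equation*}
\langle T^l_2 T^k_1 h,\, T^l_2 T^k_1 g \rangle \;=\; k \, \langle T_1 h, T_1 g \rangle + l \, \langle T_2 h, T_2 g \rangle - (k+l-1)\, \langle h, g \rangle.
\end{equation*}
For the diagonal case $m = p$, $n = q$, I would apply this with $h = g = f_0$ and $k = m, l = n$; its one-variable specializations ($l = 0$ or $k = 0$ with $h = g = f_0$) give $\|T^m_1 f_0\|^2 = m \|T_1 f_0\|^2 - (m-1) \|f_0\|^2$ and analogously for $T^n_2 f_0$, and combining the three identities yields the claimed formula. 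For the off-diagonal cases, by Hermitian symmetry it suffices to treat $m \leq p$: I would choose $k = m$, $l = \min\{n, q\}$, and take $h, g$ to be monomials in $T_1, T_2$ applied to $f_0$ so that $T^l_2 T^k_1 h = T^m_1 T^n_2 f_0$ and $T^l_2 T^k_1 g = T^p_1 T^q_2 f_0$ (concretely $h = f_0$, $g = T^{p-m}_1 T^{q-n}_2 f_0$ when $n \leq q$, and $h = T^{n-q}_2 f_0$, $g = T^{p-m}_1 f_0$ when $n > q$). Each of the three inner products on the right of the polarized formula is then of the form $\langle T^a_1 T^b_2 f_0,\, T^c_1 T^d_2 f_0 \rangle$ with small exponents, and assumptions \eqref{assumption-k-condition-1}--\eqref{assumption-k-condition-2} kill every such term to which either hypothesis directly applies. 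When both $m \neq p$ and $n \neq q$ all three terms vanish; when $m = p$, $n \neq q$ (respectively $m \neq p$, $n = q$) two of the three vanish and the remaining term matches, via a second application of the polarized formula with $k = 0$ (respectively $l = 0$), exactly $\langle T^n_2 f_0, T^q_2 f_0 \rangle$ (respectively $\langle T^m_1 f_0, T^p_1 f_0 \rangle$).

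The main obstacle is the case-by-case bookkeeping for the off-diagonal cases, since \eqref{assumption-k-condition-1} requires $q \geq 1$ and \eqref{assumption-k-condition-2} requires $p \geq 1$: before invoking either hypothesis one must verify that $|p - m| \geq 1$ or $|q - n| \geq 1$ is available, which is precisely where the assumptions $m \neq p$ and $n \neq q$ are consumed. Once the correct choice of $h, g$ is fixed, the rest of the argument is routine algebra driven by the polarized formula.
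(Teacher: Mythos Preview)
Your proposal is correct and follows essentially the same route as the paper. The only organizational differences are that for (i) the paper runs a single strong induction on $k+l$ (rather than your two separate inductions followed by substitution), and for (ii) the paper applies the \emph{first} equality of \eqref{formula-moment} directly to $f_0$ (so that the resulting terms $\langle T^m_1 f_0, T^{p}_1 T^{q-n}_2 f_0\rangle$ and $\langle T^n_2 f_0, T^{p-m}_1 T^{q}_2 f_0\rangle$ match the hypotheses immediately), whereas you polarize the \emph{second} equality and therefore need one extra invocation of the formula in the $m=p,\,n\neq q$ and $m\neq p,\,n=q$ cases; both executions are valid and the underlying idea is the same.
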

\begin{proof}
(i) To see \eqref{formula-moment}, we proceed by strong induction on $k+l,$ $k, l \Ge 0.$ Clearly, \eqref{formula-moment} holds for $0 \Le k+l \Le 1.$ Assume that \eqref{formula-moment} holds for integers $k,l \Ge 0$ such that $0 \Le k+l \Le n.$ Note that for $k \Ge 1$ and $l \Le n-1,$ by the induction hypothesis, 
\beqn
T^*_1(T^{*k}_1T^{*l}_2T^{l}_2T^{k}_1)T_1 &=& T^{*k+1}_1T^{k+1}_1 + T^*_1T^{*l}_2T^{l}_2T_1 - T^*_1T_1 \\
&=& T^{*k+1}_1T^{k+1}_1 + T^{*l}_2T^{l}_2 - I.
\eeqn
Similarly, for $k \Le n-1$ and $l \Ge 1,$ \eqref{formula-moment} holds. This completes the induction argument. The remaining identity in (i) now follows from the known fact that for any $2$-isometry $S,$ we have 
\beq \label{moment-one}
S^{*k}S^k = k(S^*S-I)+I, \quad k \Ge 0
\eeq 
(this known fact can be seen by induction on $k \Ge 1$).  

(ii) 
Let $m, n, p, q$ be integers such that $m \neq p$ and $n \neq q.$
Consider the case when $m < p$ and $n < q.$ Since $f_0 \in \ker T^*,$ we have
\beqn
&& \inp{T^m_1 T^n_2f_0}{T^p_1 T^q_2f_0} \\
&=& \inp{T^{*n}_2 T^{*m}_1T^m_1 T^n_2f_0}{T^{p-m}_1 T^{q-n}_2f_0} \\
&\overset{\eqref{formula-moment}}=& \inp{T^{*m}_1T^m_1 f_0}{T^{p-m}_1 T^{q-n}_2f_0} + \inp{T^{*n}_2 T^n_2f_0}{T^{p-m}_1 T^{q-n}_2f_0} \\
&=& \inp{T^m_1 f_0}{T^{p}_1 T^{q-n}_2f_0} + \inp{T^n_2f_0}{T^{q}_2T^{p-m}_1 f_0},
\eeqn
which, by \eqref{assumption-k-condition-1} and \eqref{assumption-k-condition-2}, is equal to $0.$ Since the inner-product is conjugate linear, 
$\inp{T^m_1 T^n_2f_0}{T^p_1 T^q_2f_0}=0$ when $p < m$ and $q < n.$ 
Consider the case when $m < p$ and $q < n.$ Arguing as above, we have  
\beqn
&& \inp{T^m_1 T^n_2f_0}{T^p_1 T^q_2f_0} \\
&=& \inp{T^{*q}_2 T^{*m}_1T^m_1 T^{q}_2T^{n-q}_2f_0}{T^{p-m}_1 f_0} \\
&\overset{\eqref{formula-moment}\&\eqref{assumption-k-condition-2}}=&  \inp{T^{*m}_1T^m_1 T^{n-q}_2f_0}{T^{p-m}_1 f_0} + \inp{T^{*q}_2 T^{q}_2T^{n-q}_2f_0}{T^{p-m}_1 f_0} \\
&=& \inp{T^m_1 T^{n-q}_2f_0}{T^{p}_1 f_0} + \inp{T^{n}_2f_0}{T^{q}_2 T^{p-m}_1 f_0},
\eeqn
which, by \eqref{assumption-k-condition-1} and \eqref{assumption-k-condition-2}, is equal to $0.$
Once again, by the conjugate-symmetry, 
$\inp{T^m_1 T^n_2 f_0}{T^p_1 T^q_2 f_0}=0$ when 
$p < m$ and $n < q.$ 

If $m=p$ and $n \neq q,$ then one may argue as above using \eqref{moment-one} (by making cases $n < q$ and $q < n$) to show that $\inp{T^m_1 T^n_2f_0}{T^p_1 T^q_2f_0} = \inp{T^n_2f_0}{T^q_2f_0}.$ Similarly, one may derived the formula in case of $m \neq p$ and $n=q.$ Finally, if $m=p$ and $n=q,$ then the formula follows from \eqref{formula-moment}. 
\end{proof} 

\begin{proof}[Proof of Theorem~\ref{model-thm}] (i)$\Rightarrow$(iii) Fix $j \in \{1, 2\}.$ 
Since $T$ is analytic, so is $T_j.$  
Thus $T_j$ is an analytic $2$-isometry. Consider the $T_j$-invariant subspace $\mathcal H_j:=\bigvee \{T^k_j f_0 : k \Ge 0\}$ and note that
${T_j}|_{\mathcal H_j}$ is a cyclic analytic $2$-isometry. Hence, 
by \cite[Theorem~5.1]{R1991}, there exist a finite positive Borel measure $\mu_j$ on $\mathbb T$ and a unitary map $V_j : \mathcal H_j \rar \mathcal D(\mu_j)$ such that
\beq
\label{intertwining-r-new}
V_jf_0=1, \quad V_jT_j=\mathscr M^{(j)}_{w}V_j,
\eeq
where $\mathscr M^{(j)}_{w}$ denotes the operator of multiplication by the coordinate function $w$ on $\mathcal D(\mu_j).$
We contend that the map is given by
\beqn
U (T^k_1T^l_2f_0) = z^k_1 z^l_2, \quad k, l \Ge 0
\eeqn
extends to an unitary from $\mathcal H$ onto $\mathcal D(\mu_1, \mu_2).$
Since $\mathcal H=\vee \{T^{k}_1T^l_2f_0 : k, l \Ge 0\}$ and $\mathcal D(\mu_1, \mu_2)=\vee \{z^{k}_1z^l_2 : k, l \Ge 0\},$  it suffices to check that 
\beq \label{equality of norms}
\inp{T^{m}_1T^n_2f_0}{T^{p}_1T^q_2f_0} = \inp{z^m_1z^n_2}{z^p_1z^q_2}_{\mathcal D(\mu_1, \mu_2)}, \quad m, n, p, q \Ge 0. 
\eeq
Note that for any integers $m, n \Ge 0,$ by \eqref{intertwining-r-new}, 
\beqn
\inp{T^m_jf_0}{T^n_jf_0}&=& \inp{V_j T^m_jf_0}{V_jT^n_jf_0}_{\mathcal D(\mu_j)} \\
&=& \inp{(\mathscr M^{(j)}_{w})^{m}V_jf_0}{(\mathscr M^{(j)}_{w})^{n}V_jf_0}_{\mathcal D(\mu_j)} \\
&=& \inp{w^m}{w^n}_{\mathcal D(\mu_j)} \\
&=& \inp{z^m_j}{z^n_j}_{\mathcal D(\mu_1, \mu_2)}.
\eeqn
Combining this with Lemma~\ref{lemma-inner-formula}(ii) yields \eqref{equality of norms}, which completes the proof. 

(iii)$\Rightarrow$(ii) This follows from Corollaries~\ref{cyclic-coro}, \ref{wandering-s} and \ref{Cowen-Douglas}. 

(ii)$\Rightarrow$(i) It suffices to check that $T$ is analytic. By Oka-Grauert's theorem (see \cite[P.~71, Corollary~2.17]{L1990}, \cite[P.~3]{ES2014}), every holomorphic vector bundle on a bidisc
is holomorphically trivial. 
Combining this with the proof of \cite[Theorem~4.5]{ES2014} shows that if 
$T^* \in {\bf B}_1(\mathbb D^2_r),$ then $T$ is unitarily equivalent to the multiplication $2$-tuple $\mathscr M_z$ on a reproducing kernel Hilbert space of scalar-valued holomorphic functions on $\mathbb D^2_r.$ Since $\mathscr M_z$ is analytic, $T$ is analytic.  
\end{proof}

The conclusion of Theorem~\ref{model-thm} can be rephrased as follows:
\begin{corollary} \label{model-coro}
A cyclic analytic toral $2$-isometric $2$-tuple on $\mathcal H$ is unitarily equivalent to the multiplication pair $\mathscr M_z$ on $\mathcal D(\mu_1, \mu_2)$ if and only if $\ker T^*$ is 
wandering subspace for $T$ spanned by a cyclic vector for $T.$
\end{corollary}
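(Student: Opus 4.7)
The plan is to deduce this corollary directly from the equivalence (i)$\Leftrightarrow$(iii) of Theorem~\ref{model-thm}. The only bridge needed is to observe, via \eqref{cyclic-kernel}, that under the standing cyclicity hypothesis one has $\dim \ker T^* \le 1$, so that the two conditions ``$\ker T^*$ is spanned by a cyclic vector of $T$'' and ``$\ker T^*$ contains a cyclic vector of $T$'' coincide. With this observation in hand the corollary is essentially a translation of Theorem~\ref{model-thm} into the language of $\ker T^*$.

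For the forward direction I would assume $T \cong \mathscr M_z$ on $\mathcal D(\mu_1, \mu_2)$. Corollary~\ref{cyclic-coro} provides the cyclicity of $\mathscr M_z$ with the constant function $1$ as a cyclic vector. Next, Corollary~\ref{T-spectrum} applied at $w = 0$, combined with the identity $\kappa(\cdot, 0) = 1$ from Lemma~\ref{r-kernel}, identifies $\ker \mathscr M^*_z$ with the one-dimensional subspace $\mathbb C \cdot 1$; Corollary~\ref{wandering-s} then tells us that this line is a wandering subspace for $\mathscr M_z$. Transporting these properties back through the unitary equivalence yields immediately that $\ker T^*$ is wandering for $T$ and is spanned by a cyclic vector of $T$.

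For the backward direction, I would assume that $T$ is a cyclic analytic toral $2$-isometric pair with $\ker T^*$ wandering and spanned by some cyclic vector $f_0$ of $T$. Then $f_0 \in \ker T^*$ is itself a cyclic vector for $T$ and $\ker T^*$ is wandering, so condition (i) of Theorem~\ref{model-thm} is verified. Invoking the implication (i)$\Rightarrow$(iii) of that theorem produces $\mu_1, \mu_2 \in M_+(\mathbb T)$ such that $T$ is unitarily equivalent to $\mathscr M_z$ on $\mathcal D(\mu_1, \mu_2)$.

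There is no substantive obstacle here, since all the heavy lifting (the existence of the measures, the analyticity consequence from $T^* \in {\bf B}_1(\mathbb D^2_r)$, and the computation of $\ker \mathscr M^*_z$) has already been absorbed into Theorem~\ref{model-thm} and the corollaries cited above. The only point requiring any care is the reformulation step using \eqref{cyclic-kernel}, without which one might worry that ``spanned by a cyclic vector'' is strictly stronger than ``containing a cyclic vector''.
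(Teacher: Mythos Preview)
Your proposal is correct and matches the paper's approach: the paper offers no separate proof at all, merely presenting the corollary as a direct rephrasing of Theorem~\ref{model-thm}, and your argument makes that rephrasing explicit by invoking (i)$\Leftrightarrow$(iii) together with Corollaries~\ref{cyclic-coro}, \ref{T-spectrum}, \ref{wandering-s} and Lemma~\ref{r-kernel}. The dimension observation via \eqref{cyclic-kernel} is a nice touch, though strictly speaking it is only needed if one wants the abstract equivalence of ``contains'' versus ``spanned by''; in the forward direction you already compute $\ker\mathscr M_z^* = \mathbb C\cdot 1$ directly, and in the backward direction ``spanned by'' trivially implies ``contains''.
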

\begin{remark} Let $\mathcal D$ denote the Dirichlet space (that is, the Dirichlet-type space associated with the Lebesgue measure on the unit circle) and let $\mathscr M_w$ be the operator of multiplication by $w$ on $\mathcal D$. It is easy to see that the commuting pair $T=(\mathscr M_w, \mathscr M_w)$ is a cyclic analytic toral $2$-isometry on $\mathcal D.$ Note that $\ker T^*=\ker D^*$ is spanned by $1$ and it is not a wandering subspace for $T.$ It is evident that $T$ is not unitarily equivalent to the multiplication pair $\mathscr M_z$ on $\mathcal D(\mu_1, \mu_2)$ for any $\mu_1, \mu_2 \in M_+(\mathbb T).$  
\end{remark}

The following is a $2$-variable analog of \cite[Theorem~5.2]{R1991}.
\begin{proposition} \label{unitary-measure}
For $j=1, 2,$ let $\mu^{(j)}_1, \mu^{(j)}_2 \in M_+(\mathbb T).$ Then the multiplication $2$-tuple $\mathscr M^{(1)}_{z}$ on $\mathcal D(\mu^{(1)}_1, \mu^{(1)}_2)$ is unitarily equivalent to the multiplication $2$-tuple $\mathscr M^{(2)}_z$ on $\mathcal D(\mu^{(2)}_1, \mu^{(2)}_2)$ if and only if $\mu^{(1)}_j = \mu^{(2)}_j,$ $j=1, 2.$ 
\end{proposition}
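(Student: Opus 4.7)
The ``if'' direction is trivial. For the converse, the plan is to transfer a unitary intertwiner to an explicit equality of inner products of monomials, and then read the Fourier coefficients of the measures off of the formula \eqref{inner-p-formula}.

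Suppose $U : \mathcal D(\mu^{(1)}_1, \mu^{(1)}_2) \rar \mathcal D(\mu^{(2)}_1, \mu^{(2)}_2)$ is unitary with $U\mathscr M^{(1)}_{z_j} = \mathscr M^{(2)}_{z_j} U$ for $j = 1, 2$. Then $U$ sends $\ker (\mathscr M^{(1)}_z)^*$ to $\ker (\mathscr M^{(2)}_z)^*$, and by Corollary~\ref{T-spectrum} each of these joint kernels equals $\mathbb C\cdot \kappa_i(\cdot,0) = \mathbb C\cdot 1$. Hence $U(1) = c$ for some unimodular scalar $c$ (we use that $\|1\|_{\mathcal D(\mu^{(i)}_1, \mu^{(i)}_2)} = 1$ in both spaces because the Dirichlet integrals of the constant function vanish). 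Applying the intertwining relations to $1$, one obtains
\begin{equation*}
U(z^m_1 z^n_2) = c\, z^m_1 z^n_2, \quad m, n \in \mathbb Z_+.
\end{equation*}

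Since $U$ is isometric and $|c| = 1$, this forces
\begin{equation*}
\inp{z^m_1 z^n_2}{z^p_1 z^q_2}_{\mathcal D(\mu^{(1)}_1, \mu^{(1)}_2)} = \inp{z^m_1 z^n_2}{z^p_1 z^q_2}_{\mathcal D(\mu^{(2)}_1, \mu^{(2)}_2)}
\end{equation*}
for every $m, n, p, q \in \mathbb Z_+$. Now one reads Fourier coefficients off of \eqref{inner-p-formula}. Taking $m = p = 0$, $n = 1$, $q \Ge 2$ and then $n \Ge 2$, $q = 1$, the case ``$m = p,\, n \neq q$'' of \eqref{inner-p-formula} yields $\hat\mu^{(1)}_2(k) = \hat\mu^{(2)}_2(k)$ for every nonzero integer $k$; taking $(m,n) = (p,q) = (0,1)$ in the diagonal case of \eqref{inner-p-formula} yields $\hat\mu^{(1)}_2(0) = \hat\mu^{(2)}_2(0)$. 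Therefore all Fourier coefficients of $\mu^{(1)}_2$ and $\mu^{(2)}_2$ agree, and by a Weierstrass/Riesz representation argument (already invoked in the ``In particular'' part of \eqref{inner-p-formula}) we conclude $\mu^{(1)}_2 = \mu^{(2)}_2$. The identical argument with the roles of the two coordinates reversed gives $\mu^{(1)}_1 = \mu^{(2)}_1$.

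I do not expect a genuine obstacle: the only place one has to be mildly careful is the reduction $U(1) = c\cdot 1$, which relies on the one-dimensionality of the joint cokernel at the origin (Corollary~\ref{T-spectrum}), and the observation that $\|1\| = 1$ in both spaces so that $|c| = 1$. Everything else is a direct bookkeeping exercise with \eqref{inner-p-formula}.
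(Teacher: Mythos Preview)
Your proof is correct and follows essentially the same approach as the paper: show that the intertwining unitary fixes monomials up to a unimodular constant, and then extract the measures from the resulting equality of inner products. The only cosmetic difference is that the paper invokes the Richter-type formula \eqref{formula-Richter} and polarizes to recover the trigonometric moments of $\mu^{(i)}_j$, whereas you appeal directly to the explicit inner-product formula \eqref{inner-p-formula} to read off the Fourier coefficients; the two routes are equivalent bookkeeping.
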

\begin{proof} 
Suppose there is a unitary operator $U : \mathcal D(\mu^{(1)}_1, \mu^{(1)}_2) \rar \mathcal D(\mu^{(2)}_1, \mu^{(2)}_2)$ such that 
\beq \label{intertwining-r}
\mathscr M^{(2)}_{z_j} U = U \mathscr M^{(1)}_{z_j}, \quad j=1, 2. 
\eeq
Since the joint kernel of the adjoint of multiplication tuples is spanned by $1,$ by \eqref{intertwining-r}, $U$ must map $1$ to some constant of modulus $1.$ After multiplying $U$ by a unimodular constant, if required, we may assume that $U1=1.$ It now follows from \eqref{intertwining-r} that $U$ is identity on polynomials. By Lemma~\ref{Richter-formula} (applied twice), we obtain for any polynomial $p$ in two variables,
\beqn
\int_{\mathbb T^2} |p(e^{i \eta},  e^{i \theta})|^2 d\mu^{(1)}_1(\eta)d\theta 
&=& \int_{\mathbb T^2} |p(e^{i \eta},  e^{i \theta})|^2 d\mu^{(2)}_1(\eta)d\theta, \\
\int_{\mathbb T^2} |p(e^{i \theta}, e^{i \eta})|^2 d\mu^{(1)}_2(\eta)d\theta 
&=& \int_{\mathbb T^2} |p(e^{i \theta}, e^{i \eta})|^2 d\mu^{(2)}_2(\eta)d\theta. 
\eeqn
It is easy to see that for any polynomial $p$ in one variable, 
\beqn
\int_{\mathbb T} |p(e^{i \eta})|^2 d\mu^{(1)}_j(\eta) 
= \int_{\mathbb T} |p(e^{i \eta})|^2 d\mu^{(2)}_j(\eta), \quad j=1, 2. 
\eeqn
Combining polarization identity with the uniqueness of the trigonometric moment problem yields the desired uniqueness. 
\end{proof}
\begin{remark} 
One may use Lemma~\ref{Richter-formula} and argue as in \cite[Theorem~6.2]{R1991} to obtain the following fact:
{\it For $j=1, 2,$ let $\mu^{(j)}_1, \mu^{(j)}_2 \in M_+(\mathbb T).$ Then $$\mathcal D(\mu^{(1)}_1, \mu^{(1)}_2) \subseteq \mathcal D(\mu^{(2)}_1, \mu^{(2)}_2)$$ if and only if $\mu^{(2)}_j  \ll \mu^{(1)}_j$ and the Radon-Nikod\'{y}m derivative ${d\mu^{(2)}_j}/{d\mu^{(1)}_j} \in L^\infty(\mathbb T),$ $j=1, 2.$} 
We leave the details to the reader. 
\end{remark}

We conclude this section with an application to toral isometries. 
\begin{corollary} \label{toral-iso-coro}
Let $T$ be a cyclic analytic toral isometry with cyclic vector $f_0 \in \ker T^*.$ Then the following statements are equivalent:
\begin{enumerate}[$\rm(i)$]
\item $\ker T^*$ is a wandering subspace for $T,$
\item $T$ is unitarily equivalent to $\mathscr M_z$ on $H^2(\mathbb D^2),$
\item $T$ is doubly commuting, that is, $T^*_{j}T_{i}=T_{i}T^*_{j},$ $1 \Le i \neq j \Le 2.$
\end{enumerate}
\end{corollary}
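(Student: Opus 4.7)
The plan is to prove a cyclic loop $(\mathrm{iii})\Rightarrow(\mathrm{i})\Rightarrow(\mathrm{ii})\Rightarrow(\mathrm{iii})$. The implication $(\mathrm{iii})\Rightarrow(\mathrm{i})$ is essentially free: it is exactly the second assertion in Remark~\ref{rmk-wandering}, namely that for a commuting tuple with $T^*_jT_i=T_iT^*_j$, $1\Le i\neq j\Le 2$, the subspace $\ker T^*$ is automatically wandering in the sense of Definition~\ref{def-ws}. The implication $(\mathrm{ii})\Rightarrow(\mathrm{iii})$ is also standard: using the orthonormal basis $\{z_1^mz_2^n:m,n\Ge 0\}$ of $H^2(\mathbb D^2)$ (equivalently, the tensor factorization $H^2(\mathbb D^2)=H^2(\mathbb D)\otimes H^2(\mathbb D)$ with $\mathscr M_{z_1}=\mathscr M_w\otimes I$ and $\mathscr M_{z_2}=I\otimes\mathscr M_w$), a direct computation on monomials shows $\mathscr M^*_{z_j}\mathscr M_{z_i}=\mathscr M_{z_i}\mathscr M^*_{z_j}$ for $i\neq j$.

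The substantive implication is $(\mathrm{i})\Rightarrow(\mathrm{ii})$. Since every toral isometry is a toral $2$-isometry (see the discussion after \eqref{C1}), the hypotheses of Theorem~\ref{model-thm}(i) are met. Applying the representation theorem, there exist $\mu_1,\mu_2\in M_+(\mathbb T)$ and a unitary intertwining $T$ with the multiplication pair $\mathscr M_z$ on $\mathcal D(\mu_1,\mu_2)$. It remains to identify the measures. Because $T_1,T_2$ are isometries, the unitary transports this to $\mathscr M^*_{z_j}\mathscr M_{z_j}=I$ on $\mathcal D(\mu_1,\mu_2)$ for $j=1,2$. Feeding the constant polynomial $p=1$ into the Richter-type identity \eqref{formula-Richter} of Theorem~\ref{thm1}(iii), with $(k,l)=(1,0)$, yields
\[
1=\|z_1\|^2_{\mathcal D(\mu_1,\mu_2)}=\|1\|^2_{\mathcal D(\mu_1,\mu_2)}+\int_{\mathbb T^2}d\mu_1(\eta)\,d\theta=1+\mu_1(\mathbb T),
\]
forcing $\mu_1=0$; taking $(k,l)=(0,1)$ gives $\mu_2=0$ analogously. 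By the definition of $\mathcal D(\mu_1,\mu_2)$ in the degenerate case, $\mathcal D(0,0)=H^2(\mathbb D^2)$ as a Hilbert space, so $T$ is unitarily equivalent to $\mathscr M_z$ on $H^2(\mathbb D^2)$.

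I expect no serious obstacle: the only non-trivial input is Theorem~\ref{model-thm}, and once it is invoked the isometry condition collapses the measures via the already-proved formula \eqref{formula-Richter}. The one minor point to keep clean is the precise meaning of $\mathcal D(0,0)$ at the boundary of the definition (which is $H^2(\mathbb D^2)$ tautologically, since $D_{0,0}\equiv 0$); this guarantees that the unitary equivalence produced by Theorem~\ref{model-thm} lands exactly on the Hardy space and not on some subspace thereof.
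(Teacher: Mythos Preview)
Your proof is correct and follows the same cyclic scheme $(\mathrm{iii})\Rightarrow(\mathrm{i})\Rightarrow(\mathrm{ii})\Rightarrow(\mathrm{iii})$ as the paper, invoking Remark~\ref{rmk-wandering}, Theorem~\ref{model-thm}, and \eqref{formula-Richter} at the same places. Your step killing the measures is in fact a bit cleaner: the paper applies \eqref{formula-Richter} to \emph{all} polynomials and then appeals to the moment-problem argument of Proposition~\ref{unitary-measure}, whereas you observe that testing $p=1$ alone already forces $\mu_j(\mathbb T)=0$ and hence $\mu_j=0$.
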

\begin{proof} (i)$\Rightarrow$(ii) By Theorem~\ref{model-thm}, there exist $\mu_1, \mu_2 \in M_+(\mathbb T)$ such that $T$ is unitarily equivalent to $\mathscr M_z$ on $\mathcal D(\mu_1, \mu_2).$ Since $T$ is a toral isometry, $\mathscr M_z$ is also a toral isometry. It now follows from 
\eqref{formula-Richter} that for every polynomial $p$ in two variables, 
\beqn
\int_{\mathbb T^2} |p(e^{i \eta},  e^{i \theta})|^2 d\mu_1(\eta)d\theta =0, \quad \int_{\mathbb T^2} |p(e^{i \theta}, e^{i \eta})|^2 d\mu_2(\eta)d\theta =0. 
\eeqn
One may now argue as the proof of Proposition~\ref{unitary-measure} to conclude that $\mu_1=0$ and $\mu_2=0.$ This yields (ii).

The implication (ii)$\Rightarrow$(iii) is a routine verification, while the implication  
(iii)$\Rightarrow$(i) is recorded in Remark~\ref{rmk-wandering}. 
\end{proof}

\section{Concluding remarks}

We conclude the paper with a brief discussion on the spectral picture of the multiplication $2$-tuple $\mathscr M_z$ on $\mathcal D(\mu_1, \mu_2).$ 
We claim that 
\beq
\label{Taylor-sp}
\sigma(\mathscr M_z) &=& \overline{\mathbb D}^2, \\
\label{Taylor-ess-sp}
\sigma_e(\mathscr M_z) & \subseteq & \overline{\mathbb D}^2 \setminus \Omega
\eeq
for some open set $\Omega$ in $\mathbb C^2$ containing $\big(\mathbb D \times \{0\}\big) \cup \big(\{0\} \times \mathbb D\big).$
To see \eqref{Taylor-sp}, note that by \cite[Theorem~4.9]{Cu1988}, 
for any commuting pair $T=(T_1, T_2),$ $\sigma(T) \subseteq \sigma(T_1) \times \sigma(T_2).$ Since the spectrum of any $2$-isometry is contained in $\overline{\mathbb D}$ (see \cite[Lemma~1.21]{AS1995}) and both $\mathscr M_{z_1}$ and $\mathscr M_{z_2}$ are $2$-isometries (see Corollary~\ref{cyclic-coro}), we obtain
$
\sigma(\mathscr M_z) \subseteq \overline{\mathbb D}^2.$
Also, by Corollary~\ref{T-spectrum}, $\mathbb D^2 \subseteq \sigma_p(\mathscr M^*_z) \subseteq \sigma(\mathscr M^*_z).$ Since $\sigma(\mathscr M^*_z)=\{\overline{z} : z \in \sigma(\mathscr M_z)\},$ we have the inclusion $\mathbb D^2 \subseteq  \sigma(\mathscr M_z).$ Finally, since the Taylor spectrum is closed (see \cite[Corollary~4.2]{Cu1988}), we obtain \eqref{Taylor-sp}. 
On the other hand, 
an examination of the proof of Theorem~\ref{Gleason-new} (using the full power of Lemma~\ref{division-slice} together with Lemma~\ref{contractive-h}) shows that  
\beqn
\big(\mathbb D \times \{0\}\big) \cup \big(\{0\} \times \mathbb D\big) \subseteq  \mathbb C^2 \setminus \sigma_e(\mathscr M_z).
\eeqn
Since the essential spectrum is a closed subset of the Taylor spectrum, \eqref{Taylor-ess-sp} now follows from \eqref{Taylor-sp}.
The natural question arises {\it whether the unit bidisc lies in the complement of the essential spectrum of $\mathscr M_z$} (there are interesting examples of toral $2$-isometries supporting this possibility; see \cite[Proposition~5(iii)]{AS1999}). If this question has an affirmative answer, then $\sigma_e(\mathscr M_z) = \partial (\mathbb D^2).$
Indeed,  if $\lambda \in \partial(\mathbb D^2) \backslash \sigma_e(\mathscr M_z),$ then there exist two sequences in $\mathbb D^2$ and $\mathbb C^2 \backslash \overline{\mathbb D}^2$ converging to $\lambda,$ which together with the continuity of the Fredholm index (see \eqref{index})
leads to a contradiction. This in turn leads to an improvement of  Corollary~\ref{Cowen-Douglas} providing a bidisc analog of \cite[Corollary~3.8]{R1991} and also solves Gleason's problem for $\mathcal D(\mu_1, \mu_2)$ (see Lemma~\ref{abstract-Gleason-p}).  

\medskip \textit{Acknowledgement}. \ The authors would like to thank Archana Morye, Shibananda Biswas and Somnath Hazra for some fruitful conversations on the subject of this article.

%


\begin{thebibliography}{100}

\bibitem{A1990} J. Agler, A disconjugacy theorem for Toeplitz operators, {\it Amer. J. Math.} {\bf 112} (1990) 1-14. 

\bibitem{AS1995}  J. Agler, M. Stankus, $m$-isometric transformations of Hilbert space. I, {\it Integral Equations Operator Theory} {\bf 21} (1995), 383-429.

\bibitem{AR1997}  A. Aleman, S. Richter, Some sufficient conditions for the division property of invariant subspaces in weighted Bergman spaces, {\it J. Funct. Anal.} {\bf 144} (1997), 542-556.

\bibitem{AW1990} E. Albrecht, P. M. Wirtz, Multicyclic systems of commuting operators, Linear operators in function spaces (Timişoara, 1988), 39–61, {\it Oper. Theory Adv. Appl.,} {\bf 43}, Birkhäuser, Basel, 1990.




\bibitem{AS1999} A. Athavale, V. M. Sholapurkar, Completely hyperexpansive operator tuples, {\it Positivity} {\bf 3} (1999), 245-257. 

\bibitem{BEKS2017}  M. Bhattacharjee, J. Eschmeier, D. K. Keshari, J. Sarkar, Dilations, wandering subspaces, and inner functions, {\it Linear Algebra Appl.} {\bf 523} (2017), 263-280.

\bibitem{C2007} S. Chavan, 
On operators Cauchy dual to 2-hyperexpansive operators,  {\it Proc. Edinb. Math. Soc.} {\bf 50} (2007), 637-652.

\bibitem{CGR2020} S. Chavan, R. Gupta, Md. R. Reza, Dirichlet-type spaces on the unit ball and joint $2$-isometries, {\it J. Funct. Anal.} {\bf 279} (2020), 108733, 29 pp.

\bibitem{Co1990}  J. B. Conway, {\it A course in functional analysis}, Graduate Texts in Mathematics, {\bf 96}. Springer-Verlag, New York, 1990. xvi+399 pp.

\bibitem{CD1978} M. Cowen and R. Douglas, Complex geometry and operator theory, {\it Acta Math.} {\bf 141}  (1978), 187-261.

\bibitem{Cu1981}  R. E. Curto, Fredholm and invertible $n$-tuples of operators. The deformation problem, {\it Trans. Amer. Math. Soc.} {\bf 266} (1981),  129-159. 

\bibitem{Cu1988} R. E. Curto, Applications of several complex variables to multiparameter spectral theory. Surveys of Some Recent Results in Operator Theory, vol. II, pp. 25-90, Pitman Research Notes in Mathematics Series, {\bf 192}. Longman Scientific $\&$ Technical, Harlow (1988).

\bibitem{CuS1984} R. E. Curto and N. Salinas, Generalized Bergman kernels and the Cowen-Douglas theory, {\it Amer. J. Math.} {\bf 106} (1984), 447-488.


\bibitem{EKMR2014} O. El-Fallah, K. Kellay, J. Mashreghi, T. Ransford, {\it A Primer on the Dirichlet Space}, Cambridge Tracts in Mathematics, vol. {\bf 203}, Cambridge University Press, Cambridge, 2014, xiv+211 pp. 

\bibitem{EL2018} J. Eschmeier, S. Langendörfer, 
Multivariable Bergman shifts and Wold decompositions, {\it Integral Equations Operator Theory} {\bf 90} (2018), Paper No. 56, 17 pp.

\bibitem{ES2014} J. Eschmeier, J. Schmitt, 
Cowen-Douglas operators and dominating sets, 
{\it J. Operator Theory} {\bf 72} (2014), 277-290.

\bibitem{GM2020}  S. Ghara, G. Misra, Decomposition of the tensor product of two Hilbert modules. Operator theory, operator algebras and their interactions with geometry and topology-Ronald G. Douglas memorial volume, 221-265, {\it Oper. Theory Adv. Appl.}, {\bf 278}, Birkhäuser/Springer, Cham, [2020].

\bibitem{GR2006} J. Gleason, S. Richter, $m$-isometric commuting tuples of operators on a Hilbert space, {\it 
Integral Equations Operator Theory} {\bf 56} (2006), 181-196.

\bibitem{GRS2005}  J. Gleason, S. Richter, C. Sundberg, On the index of invariant subspaces in spaces of analytic functions of several complex variables, {\it J. Reine Angew. Math.} {\bf 587} (2005), 49-76.

\bibitem{H1961} P. R. Halmos, Shifts on Hilbert spaces, {\it J. Reine Angew. Math.} {\bf 208} (1961), 102-112.

\bibitem{L1990} J. Leiterer, Holomorphic vector bundles and the Oka–Grauert principle, in {\it Several Complex Variables}. IV, EMS, vol. {\bf 10}, Springer, Berlin 1990, pp. 63-103.

\bibitem{O2004} A. Olofsson, A von Neumann-Wold decomposition of two-isometries, {\it Acta Sci. Math.} {\bf 70} (2004), 715-726.

\bibitem{PR2016} V. I. Paulsen, M. Raghupathi, {\it An introduction to the theory of reproducing kernel Hilbert spaces}. Cambridge Studies in Advanced Mathematics, {\bf 152}. Cambridge University Press, Cambridge, 2016. x+182 pp.

\bibitem{R1987} S. Richter, Invariant subspaces in Banach spaces of analytic functions, {\it Trans. Amer. Math. Soc.} {\bf 304} (1987), 585-616.

\bibitem{R1988} S. Richter, Invariant subspaces of the Dirichlet shift,
{\it J. Reine Angew. Math.} {\bf 386} (1988), 205-220. 

\bibitem{R1991} S. Richter, A representation theorem for cyclic analytic two-isometries, {\it Trans. Am. Math. Soc.} {\bf 328} (1991), 325-349.

\bibitem{Ru1969}  W. Rudin, {\it Function theory in polydiscs}. W. A. Benjamin, Inc., New York-Amsterdam 1969, vii+188 pp. 

\bibitem{Ru1987} W. Rudin, {\it Real and complex analysis}. Third edition. McGraw-Hill Book Co., New York, 1987, xiv+416 pp.


\bibitem{S1997} D. Sarason, 
Local Dirichlet spaces as de Branges-Rovnyak spaces, 
{\it Proc. Amer. Math. Soc.} {\bf 125} (1997), 2133-2139. 

\bibitem{T1970} J. L. Taylor, 
A joint spectrum for several commuting operators,
{\it J. Funct. Anal.} {\bf 6} (1970), 172-191.

\bibitem{Z1988}  K. H. Zhu, The Bergman spaces, the Bloch space, and Gleason's problem, {\it Trans. Amer. Math. Soc.} {\bf 309} (1988), 253-268.

\end{thebibliography}
\end{document}